\DeclareMathAlphabet{\mathcal}{OMS}{cmsy}{m}{n} %for having \mathcal
\def\HYPER{\relax}
\renewcommand{\href}[2]{\relax}
\renewcommand{\url}[1]{#1}
\numberwithin{equation}{section}
\def\Re{\mathop{\mathrm{Re}}}
\def\Im{\mathop{\mathrm{Im}}}
\def\D{\mathbb D}
\def\H{\mathbb{H}}
\def\R{\mathbb R}
\def\C{\mathbb C}
\def\Hol{{\sf Hol}}
\def\Aut{{\sf Aut}}
\def\N{\mathbb N}
\def\id{{\sf id}}
\def\arctanh{\mathop{\mathrm{arctanh}}}
\newcommand{\widesim}[1]{
	\mathrel{\lower.36ex\hbox{$\overset{#1}{\scalebox{1.25}[1]{$\sim$}}$}}
}
\newtheorem{theorem}{Theorem}%[section]
\newtheorem{lemma}{Lemma}[section]
\newtheorem{proposition}[lemma]{Proposition}
\newtheorem{corollary}[theorem]{Corollary}
\newtheorem{theorem*}{Theorem}
\theoremstyle{definition}
\newtheorem{definition}[lemma]{Definition}
\newtheorem{addendum}{Addendum}
\theoremstyle{remark}
\newtheorem{remark}[lemma]{Remark}
\numberwithin{equation}{section}
\newcommand{\UH}{\mathbb{H}}
\newcommand{\UD}{\mathbb{D}}
\newcommand{\UC}{\partial\UD}
\newcommand{\Real}{\mathbb{R}}
\newcommand{\Natural}{\mathbb{N}}
\DeclareMathOperator{\di}{d\!}
\newcommand{\anglim}{\angle\lim}
\newcommand{\BH}{{\mathrm{B}_\UH}} %(closed) hyperbolic disk
\renewcommand{\ge}{\geqslant}
\renewcommand{\le}{\leqslant}
\renewcommand{\geq}{\geqslant}
\renewcommand{\leq}{\leqslant}
\newcommand{\weaklyto}{\xrightarrow{\text{weakly\,}}}
\newcommand{\proofof}[1]{{\fontseries{bx}\fontshape{it}\selectfont Proof of #1}}
\newcommand{\StepP}[1]{\medskip\noindent{\textsc{Proof of} #1.}}
\newcommand{\StepPm}[1]{\medskip\noindent{\textsc{Proof of} #1~}}
\newcommand{\StepG}[1]{\medskip\noindent{\textsc{#1}}}
\newcommand{\df}[1]{{\fontseries{bx}\fontshape{it}\selectfont #1}}
\newenvironment{arablist}{\begin{enumerate}[label={\rm (\arabic*)}, ref={\hbox{(\arabic*)}}, left=.7em]}{\end{enumerate}}
\newcounter{alphlistcounter}
\newenvironment{alphlist}{\begin{enumerate}[label={\bf (\alph*)}, ref={\hbox{(\alph*)}}, left=0em]%%
	\setcounter{enumi}{\value{alphlistcounter}}%
	\everydisplay{\makeatletter\def\@eqnum{\normalfont(\theequation)}\makeatother}}{\end{enumerate}}
\newenvironment{alphlistP}{\begin{enumerate}[label={\bf (\alph*')}, ref={\hbox{(\alph*')}}, left=0em]%
	\setcounter{enumi}{\value{alphlistcounter}}%
	\everydisplay{\makeatletter\def\@eqnum{\normalfont(\theequation)}\makeatother}}{\end{enumerate}}
\newenvironment{alphlistPP}{\begin{enumerate}[label={\bf (\alph*'')}, ref={\hbox{(\alph*'')}}, left=0em]%
	\setcounter{enumi}{\value{alphlistcounter}}%
	\everydisplay{\makeatletter\def\@eqnum{\normalfont(\theequation)}\makeatother}}{\end{enumerate}}
\newenvironment{romlist}{\begin{enumerate}[label={\bf (\hskip.07em\roman*\hskip.07em)}, ref={\hbox{(\hskip.05em\roman*\hskip.05em)}}, left=0em]%
	\everydisplay{\makeatletter\def\@eqnum{\normalfont(\theequation)}\makeatother}}{\end{enumerate}}
\DeclareRobustCommand{\SkipTocEntry}[5]{}
\def\hypdist{{\mathrm k}}%Hyperbolic distance
\def\hyplen{{\ell}}%Hyperbolic length
\newcommand{\dhyp}{\mathop{\mathrm D_h}\!}
\newcommand{\hsegment}[1]{{[#1]_{h}}} %Hyperbolic segment
\newcommand{\Hbspace}{\mathcal{H}(\varphi)} %De Branges-Rovnyak space for \varphi
\begin{document}

\title[Hyperbolic distortion and conformality at the boundary]{Hyperbolic distortion and conformality at the boundary}

\author[P. Gumenyuk]{Pavel Gumenyuk}
\address{P. Gumenyuk: Department of Mathematics, Politecnico di Milano, via E. Bonardi 9, 20133 Milan, Italy.}
\email{pavel.gumenyuk@polimi.it}

\author[M. Kourou]{Maria Kourou$^{\S}$}
\thanks{$^{\S}$Partially supported by the Alexander von Humboldt Foundation.}
\address{M. Kourou: Department of Mathematics, Julius-Maximilians University of W\"urzburg, Emil Fischer Strasse 40, 97074 W\"urzburg, Germany.}
\email{maria.kourou@uni-wuerzburg.de}

\author[A. Moucha]{Annika Moucha$^{\S}$}
%\thanks{$^{\S}$Partially supported by the Alexander von Humboldt Foundation.}
\address{A. Moucha: Department of Mathematics, Julius-Maximilians University of W\"urzburg, Emil Fischer Strasse 40, 97074 W\"urzburg, Germany.}
\email{annika.moucha@uni-wuerzburg.de}

\author[O. Roth]{Oliver Roth}
\address{O. Roth: Department of Mathematics, Julius-Maximilians University of W\"urzburg, Emil Fischer Strasse 40, 97074 W\"urzburg, Germany.} \email{roth@mathematik.uni-wuerzburg.de}

\date\today

\subjclass[2020]{Primary 30C35, 30C55, 30C80. Secondary 30H45, 37F99}
\keywords{Schwarz Lemma, Hyperbolic Distortion, Conformal at the boundary}
%\thanks{}

\begin{abstract}
	We characterize two classical types of conformality of a holomorphic self-map of the unit disk at a boundary point~---   existence of a finite angular derivative in the sense of Carath\'eodory and the weaker property of angle preservation~---  in terms of the non-tangential asymptotic behaviour of the hyperbolic distortion of the map. These characterizations are given purely with reference to the intrinsic metric geometry of the unit disk. In particular, we relate the classical Julia\,--Wolff\,--\,Carath\'eodory theorem with the case of equality in the Schwarz\,--\,Pick lemma at the boundary. We also provide an operator-theoretic characterization of the existence of a finite angular derivative based on Hilbert space methods. As an application we study  the backward dynamics of discrete dynamical systems induced by holomorphic self-maps,  and characterize the regularity of the associated pre-models in terms of a Blaschke-type condition involving the hyperbolic distortion along regular backward orbits.
\end{abstract}

\maketitle

\tableofcontents

\newpage

\section{Introduction} \label{sec:Intro}
In this paper we relate the Julia\,--\,Wolff\,--\,Carath\'eodory Theorem to the case of asymptotic equality at the boundary in the Schwarz\,--\,Pick lemma.
We deal with two classical notions of boundary conformality   for holomorphic self-maps of the unit disk~$\UD\subseteq\C$:

\begin{enumerate}[ref=label]
	\item[($\text{C}_{\text{w}}$)]\label{IT:NTConformality}
	The non-tangential conformality, in the sense of  angle preservation, at a boundary point~${\sigma\in\UC}$;\smallskip
	\item[($\text{C}_{\text{s}}\;$)]\label{IT:AngDer}
	The existence of a finite angular derivative at a boundary point $\sigma\in\UC$.
\end{enumerate}
We refer to \hyperref[IT:NTConformality]{($\text{C}_{\text{w}}$)} as \textbf{conformality at $\sigma$ in the weak sense} and to \hyperref[IT:AngDer]{($\text{C}_{\text{s}}$)} as \textbf{conformality at $\sigma$ in the strong sense} (see Section \ref{SEC:MainResults} for the precise definitions). It should be mentioned that both types of boundary conformality of a holomorphic self-map ${\varphi:\UD\to\UD}$ guarantee  that the angular limit $\varphi(\sigma):=\anglim_{z\to\sigma}\varphi(z)$ of~$\varphi$ at~$\sigma$ does exist and belongs to~$\UC$.

The notion of weak conformality at the boundary is a fundamental and well-developed concept in complex analysis. Some authors call it  \textit{isogonality} (see \cite{Pommerenke:BB}) or \textit{semiconformality} (see \cite{BK2022, JO1977,Ostrowski1937,JO1977}). In contrast to strong conformality, it  has a clear geometric meaning:  every smooth curve in $\D$ which lands non-tangentially at $\sigma$ is mapped  onto a smooth curve landing at some point $\omega\in\UC$, where~$\omega$ is independent of the choice of the curve, and  the angles between two such curves are preserved, see e.g. \cite[Sections 4.3,\,11.3 and\,\,11.4]{Pommerenke:BB} or \cite[Sections V.5 and\,\,V.6]{GarnettMarshall2005}. For this reason, this concept arises naturally in geometric function theory, e.g.~for studying the well-known angular derivative problem for conformal mappings \cite{BK2022, GarnettMarshall2005,JO1977, Pommerenke:BB,RodinWarsch}. It has found various recent applications, for instance in the study of discrete and continuous iteration in hyperbolic domains, in particular for the investigation of so-called pre-models associated to repulsive fixed points, see  \cite{Poggi-Corradini,Poggi-Corradini2003} and \cite[Section 13.2]{BCD-Book}, and in connection with commuting self-maps \cite[Section 8]{commuting-secondpaper}.

The notion of strong conformality at the boundary, that is,
the existence of a finite angular derivative in the sense of Carath\'eodory,  is also rooted in classical geometric function theory. It is relevant for several areas of recent research such as composition operators \cite{CM1995,Shapiro:book}, 
boundary behaviour of conformal mappings \cite{GarnettMarshall2005, Pommerenke:BB}, continuous semigroups of holomorphic self-maps \cite{BCD-Book}, and discrete iteration~\cite{Marco}.
The angular derivative has also been considered  from an operator-theoretic point of view (see \cite{AMY,FM1,FM2,GMR,Sarason1988,Sarason1994}) and for holomorphic functions of several complex variables (\cite{Abate1990,Abate1998,Agler2012,AF2024,Herve1963,Rudin1980}).

Starting with the classical works of Julia, Wolff and Carath\'eodory in the 1920s,  the concept of angular derivative has been closely tied to a boundary version of the Schwarz lemma, known as Julia's lemma or the Julia\,--\,Wolff lemma; see \cite[Chapter~2]{Marco} for an up-to-date account and \cite{Abate2021,BM2004,FR} for recent multi-point versions of these classical results.
One of the goals of the present paper is to establish a different connection between conformality at the boundary and the Schwarz lemma. This connection is based on a study of the finer aspects of the \emph{cases of equality} in the Schwarz lemma at the boundary.

In fact, the topic of ``Schwarz lemmas at the boundary'', in particular with a focus on the case of equality and in conjunction with rigidity properties of  holomorphic maps, has been widely studied from various perspectives in the past decades.
For maps that are smooth up to the boundary, the underlying idea can be traced back to the famous work of Loewner \cite[Hilfssatz I]{Loe23} introducing the celebrated Loewner differential equation. The basic result in this direction without any assumption about regularity at the boundary was obtained by Burns and Krantz in their pioneering paper~\cite{BurnsKrantz1994}. Their work has stimulated much activity in recent years in boundary rigidity results for holomorphic mappings in one and several complex variables, see e.g.~\cite{BaraccoZaitsevZampieri2006,BKR2024,FDO,Chelst2001,Dubinin2004,
	EJLS2014,LiuTang2016,Osserman2000,Shoikhet2008,TauVla2001,Zimmer2022}.

The main objective of the present paper is to demonstrate that each of the  conditions \hyperref[IT:NTConformality]{($\text{C}_{\text{w}}$)} and~\hyperref[IT:AngDer]{($\text{C}_{\text{s}}$)} corresponds exactly to  a certain natural ``non-tangential'' case of equality in the invariant Schwarz lemma at the boundary point~$\sigma$. This leads to conformally invariant characterizations of both types of boundary conformality entirely in terms of the natural metric geometry of the disk. Thereby,  our results link the classical Julia\,--\,Wolff\,--\,Carath\'eodory theorem on the angular derivative of  holomorphic self-maps $\varphi$ of the unit disk $\UD$ with
the circle of ideas around the boundary Schwarz lemma of  Burns and Krantz.  We also show that conformality in the strong sense corresponds precisely to~a natural ``non-tangential'' case of equality at the boundary in the Cauchy\,--\,Schwarz inequality for the de Branges\,--\,Rovnyak space $\Hbspace$ associated to $\varphi$. In fact, we give an independent proof of this  equivalence by  operator-theoretic methods. As an application of our approach, we investigate a problem for the discrete dynamical system induced by iterating  a holomorphic self-map $\varphi : \D \to \D$. Specifically, we characterize the regularity of the pre-model for~$\varphi$ at a repulsive fixed point ${\sigma\in\UC}$ in terms of the hyperbolic distortion of~$\varphi$ evaluated along a regular backward orbit. This extends the main result of the recent paper~\cite{our-in-RMI} on holomorphic semiflows in~$\UD$ to the discrete setting.

\section{Main results} \label{SEC:MainResults}
The hyperbolic metric $\lambda_\UD(z) \, |\!\di z|$  of the unit disk~$\UD:=\{z\in\C\colon |z|<1\}$ is defined by
$$ \lambda_{\D}(z):=\frac{2}{1-|z|^2} \, ,$$
and we denote the associated hyperbolic distance between two points $z,w \in \D$ by
$\hypdist_\UD(z,w)$.
The invariant Schwarz lemma or Schwarz\,--\,Pick lemma is concerned with the set $\Hol(\UD,\UD)$ of holomorphic self-maps of $\D$. It says that every $\varphi \in \Hol(\D,\D)$
is a weak contraction of the metric space $(\UD,\hypdist_\UD)$, that is
\begin{equation} \label{EQ:SchwarzPickTwoPoints}
	\hypdist_{\UD}\big(\varphi(z),\varphi(w) \big) \le \hypdist_{\UD} \big( z,w\big),\quad z,w \in \UD.
\end{equation}
Moreover,  equality holds for two distinct points $z,w \in \UD$  --- and hence for all $z,w\in \UD$ --- if and only if $\varphi$ belongs to the group $\Aut(\D)$ of all conformal automorphisms (or biholomorphisms) of~$\D$. Equivalently,
the \emph{hyperbolic distortion} $\dhyp \varphi : \D \to [0,+\infty)$ defined by
\begin{equation} \label{EQ:HypDistDef}
	\dhyp \varphi(z):=\lim \limits_{w \to z} \frac{\hypdist_\UD\big(\varphi(z),\varphi(w)\big)}{\hypdist_\UD(z,w)}=\left( 1-|z|^2 \right) \frac{|\varphi'(z)|}{1-|\varphi(z)|^2}
\end{equation}
satisfies
\begin{equation} \label{EQ:SchwarzPickOnePoint}
	\dhyp \varphi(z) \le 1
\end{equation}
for all $z \in \D$ with equality for one --- and hence for every --- ${z \in \D}$ if and only if ${\varphi \in \Aut(\D)}$; see \cite[Def.~5.1]{BM2004}.

As usual, we use the symbol $\,\,\anglim\,\,$ to denote the \emph{non-tangential} or \emph{angular limit}, see e.g. \cite[Section 2.2]{Marco}.
The concrete point of departure of this paper has been the following question posed in~\cite[Problem~5.2]{FDO}: \textit{let $\varphi\in\Hol(\UD,\UD)$, $\sigma\in\partial\UD$, and suppose  that
	\begin{equation}\label{EQ_anglim-abs-hder=1}
		\anglim_{z\to\sigma} \dhyp \varphi(z)~=~1.
	\end{equation}
	Does this imply that $\varphi$ has an angular limit at~$\sigma$?}
\smallskip

Our first main result implies an affirmative answer to the above question and, in fact, tells much more. In order to properly formulate this result, we need to introduce some terminology.

\begin{definition}\label{DF_conformality-at-the-boundary}
	Let $\sigma\in\UC$. We call $\varphi\in\Hol(\UD,\UD)$ \df{conformal at~$\sigma$ in the strong sense} if $\varphi$ possesses a finite angular derivative in the sense of Carath\'eodory at~$\sigma$, i.e.~if  there exists~$\omega\in\UC$ such that the angular limit
	\begin{equation}\label{EQ_angular-Caratheodory}
		\varphi'(\sigma):=\anglim_{z\to\sigma}\frac{\varphi(z)-\omega}{z-\sigma}
	\end{equation}
	is finite.
	We call $\varphi\in\Hol(\UD,\UD)$ \df{conformal at~$\sigma$ in the weak sense} if\\[-2mm]
	\begin{equation}\label{EQ_contact-point-condition}
		\varphi(\sigma):=\anglim_{z\to\sigma}\varphi(z)\,\in\,\UC
	\end{equation}
	and
	\begin{equation}\label{EQ_isogonality-for-selfmap}
		\anglim_{z\to\sigma}\arg\frac{1-\overline{\varphi(\sigma)}\varphi(z)}{1-\overline\sigma z}=0.
	\end{equation}
      \end{definition}
We emphasize that the limit value $\varphi(\sigma)$ in~\eqref{EQ_contact-point-condition} is required to lie on the unit circle~$\UC$.
\begin{remark} If $\varphi\in \Hol(\D,\D)$ is conformal at~$\sigma$ in the strong sense, then~$\varphi$ is clearly also conformal at~$\sigma$ in the weak sense, with ${\varphi(\sigma)=\omega}$. The converse is not true, see \cite[Example 2.2]{Zorboska2015}  and also Section~\ref{SUB:AngDerVSHypDist} below.  The angular limit $\varphi'(\sigma)$  in~\eqref{EQ_angular-Caratheodory} exists (finitely or infinitely) for any~${\sigma,\omega\in\UC}$ and any ${\varphi\in\Hol(\UD,\UD)}$; see e.g. \cite[Theorem~2.3.2]{Marco}. Moreover,  $\varphi$ is conformal at~$\sigma$ in the strong sense if and only if~\eqref{EQ_contact-point-condition} holds and the angular limit ${\anglim_{z\to\sigma}\varphi'(z)}$ exists and is finite, see e.g. \cite[Proposition~2.3.1]{Marco}, \cite[Proposition~4.7]{Pommerenke:BB}, or \cite[Section 4.2]{Shapiro:book}. Geometrically, condition~\eqref{EQ_isogonality-for-selfmap} means that for smooth curves in $\UD$ tending non-tangentially to~$\sigma$, the angles at which they meet $\UC$ are preserved by~$\varphi$.
\end{remark}

We are now prepared to state our first main result.
\begin{theorem}\label{TH-main1}
	Suppose $\varphi\in\Hol(\UD,\UD)$ and $\sigma \in \partial \UD$. Then the following conditions are pairwise equivalent:
	\begin{alphlist}
		\item\label{IT_TH-main1:hder-to-1}
		${\dhyp \varphi(r\sigma)\to1}$ as ${r\to1^-}$.\\[-1mm]
		\item\label{IT_Th-main1:hdiffquo-to-1}
		$\displaystyle \angle \lim \limits_{z,w \to \sigma} \frac{\hypdist_{\UD}\big(\varphi(z),\varphi(w) \big)}{\hypdist_\UD \big(z,w \big)} = 1$.\\[0mm]
		\item\label{IT_TH-main1:semireg-cntct-pnt}
		The self-map $\varphi$ is conformal at $\sigma$ in the weak sense.
	\end{alphlist}
\end{theorem}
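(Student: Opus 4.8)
The plan is to prove the cycle of implications (b)$\Rightarrow$(a)$\Rightarrow$(c)$\Rightarrow$(b). The first is immediate: in the double non‑tangential limit in (b) we are free to let $w$ tend to $z=r\sigma$ along the radius through $r\sigma$ — all these points lie in any fixed Stolz angle at $\sigma$ — and by \eqref{EQ:HypDistDef} the difference quotient then tends to $\dhyp\varphi(r\sigma)$; hence (b) forces $\dhyp\varphi(r\sigma)\to1$, which is (a).

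For the remaining two implications I would transfer everything to the half‑plane. After a rotation we may assume $\sigma=1$; let $\Pi:=\{w\in\C:\Re w>0\}$, let $C(z):=(1+z)/(1-z)$ be the Cayley map of $\UD$ onto $\Pi$ with $C(1)=\infty$, and set $\Phi:=C\circ\varphi\circ C^{-1}\in\Hol(\Pi,\Pi)$. The radius $(0,1)$ is carried onto $(0,\infty)$, and, the hyperbolic distortion being a conformal invariant, $\dhyp\varphi(r)=\dhyp\Phi(x)$ with $x=C(r)\to+\infty$. The key auxiliary function is $H(w):=\log(\Phi(w)/w)$, which is well defined and holomorphic on $\Pi$ because $\Phi$ is zero‑free there and $\Phi(w)/w$ is a quotient of two points of $\Pi$, hence omits $(-\infty,0]$; in particular $|\Im H|<\pi$, and $\Im H(w)=\arg\Phi(w)-\arg w$ is exactly the defect measured by \eqref{EQ_isogonality-for-selfmap}. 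Writing $\Phi(w)=we^{H(w)}$, $\Phi'(w)=e^{H(w)}(1+wH'(w))$ and restricting to $w=x>0$ gives the identity $\dhyp\Phi(x)=|1+xH'(x)|/\cos(\Im H(x))$; thus the Schwarz--Pick bound \eqref{EQ:SchwarzPickOnePoint} reads $|1+xH'(x)|\le\cos(\Im H(x))$, and expanding this also yields $\Re(xH'(x))\le0$, i.e.\ the monotonicity that $x\mapsto|\Phi(x)|/x$ is non‑increasing on $(0,\infty)$.

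To prove (a)$\Rightarrow$(c) I would argue in three steps. \emph{Step 1: existence of the angular limit.} One shows that $\Phi(x)$ converges as $x\to+\infty$ to a point $\eta\in\partial\Pi\cup\{\infty\}$; then $\varphi(r)\to C^{-1}(\eta)=:\omega\in\UC$, so by Lindelöf's theorem $\varphi$ has angular limit $\omega=\varphi(\sigma)$ and \eqref{EQ_contact-point-condition} holds. Here the idea is a normalised blow‑up: for $x_n\to+\infty$ the maps $\Phi_n(w):=\Phi(x_nw)$ form a normal family, any locally uniform subsequential limit $\Phi_\infty$ satisfies $\dhyp\Phi_\infty(t)=\lim_n\dhyp\Phi(x_nt)=1$ for all $t>0$ by (a), and so by the equality case in the Schwarz--Pick lemma $\Phi_\infty$ is either a constant in $\partial\Pi\cup\{\infty\}$ or an automorphism of $\Pi$; post‑composing the $\Phi_n$ with automorphisms normalising value and derivative at a fixed point forces the automorphic limit to be the identity, and combining this with the monotonicity of $|\Phi(x)|/x$ yields the convergence of $\Phi(x)$ and the location of $\eta$. \emph{Step 2: normalisation.} Post‑compose $\Phi$ with an automorphism of $\Pi$ sending $\eta$ to $\infty$; this does not change $\dhyp\Phi$, so we may assume $\Phi(x)\to\infty$, i.e.\ $\omega=\sigma$. \emph{Step 3: angle preservation.} From $|1+xH'(x)|=\dhyp\Phi(x)\cos(\Im H(x))$ together with (a) and $\Phi(x)\to\infty$, a further argument using the harmonicity of $\Re H$ and $\Im H$ first gives $\Im H(x)=\arg\Phi(x)\to0$. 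Finally one upgrades this from the radius to all non‑tangential approaches: $\Im H$ is a bounded harmonic function, so $\pi-\Im H$ is positive harmonic with finite radial limit $\pi$ at the relevant boundary point, and by the Fatou‑type theory of boundary limits of positive harmonic functions this radial limit is attained non‑tangentially; hence $\Im H(w)\to0$ non‑tangentially, which is precisely \eqref{EQ_isogonality-for-selfmap}, completing (c).

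For (c)$\Rightarrow$(b) I would again pass to $\Pi$, now sending both $\sigma$ and $\varphi(\sigma)$ to $\infty$, so that $\Phi(w)\to\infty$ and $\Im H(w)\to0$ non‑tangentially at $\infty$. Writing $\rho:=|(\zeta-\xi)/(\zeta+\bar\xi)|$ and $\rho':=|(\Phi(\zeta)-\Phi(\xi))/(\Phi(\zeta)+\overline{\Phi(\xi)})|$ for the pseudo‑hyperbolic distances of $\zeta,\xi$ and of $\Phi(\zeta),\Phi(\xi)$ in $\Pi$, the ratio in (b) equals $\arctanh\rho'/\arctanh\rho$. Factoring $\Phi(\zeta)=\zeta e^{H(\zeta)}$, one writes $\rho'/\rho$ through $e^{H(\zeta)-H(\xi)}$ and $e^{-2i\Im H(\xi)}$ and uses $\Im H\to0$ together with the Schwarz--Pick bound $|H'(w)|\lesssim1/\Re w$ to obtain $\rho'=\rho(1+o(1))$ whenever $\rho$ stays away from $1$, and $\arctanh\rho'/\arctanh\rho\to1$ also in the regime $\rho\to1$ — there one must use that $\rho\to1$ forces $\zeta,\xi$ to straddle the boundary point $\infty$ hyperbolically in a way that weak conformality propagates to $\Phi(\zeta),\Phi(\xi)$. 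The main obstacle throughout is this half‑plane analysis in the direction (a)$\Rightarrow$(c): Step 1 is precisely where the question of \cite[Problem~5.2]{FDO} gets answered, and Step 3 is where the radial hypothesis must be converted into a genuinely non‑tangential statement; the $\rho\to1$ case of (c)$\Rightarrow$(b) is of comparable subtlety.
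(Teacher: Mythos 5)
Your implication (b)\,$\Rightarrow$\,(a) is correct, your transfer to the half-plane is standard, and the algebraic identity $\dhyp\Phi(x)=|1+xH'(x)|/\cos(\Im H(x))$ together with the consequence that $x\mapsto|\Phi(x)|/x$ is non-increasing is a genuinely useful observation. However, the remaining implications, which are exactly the hard content of the theorem, have gaps that your sketch does not close.

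\textbf{The blow-up in (a)\,$\Rightarrow$\,(c), Step 1.} The family $G_n(w)=\bigl(\Phi(x_nw)-i\Im\Phi(x_n)\bigr)/\Re\Phi(x_n)$ is indeed normal, fixes $1$, and has $|G_n'(1)|=\dhyp\Phi(x_n)\to1$. But every subsequential limit is an elliptic automorphism of $\Pi$ with derivative $\kappa_\infty$ at $1$, where $\kappa_\infty$ is a subsequential limit of $\kappa_n:=\Phi'(x_n)/|\Phi'(x_n)|$; nothing forces these subsequential limits to coincide. Post-composing with automorphisms ``normalising value and derivative at a fixed point'' precisely rotates away the argument $\arg\kappa_n$, so the normalized family converging to the identity carries no information about the un-normalized $G_n$; to pass back you would need to know the normalizing rotations converge, which is equivalent to knowing $\kappa_n$ converges -- exactly the thing to be proved. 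Monotonicity of $|\Phi(x)|/x$ controls $\Re H(x)$ but says nothing about $\Im H(x)$, so it cannot close the gap. The paper's Step~1 handles this with an explicit recursion for $a_n=\cot\bigl(\frac12\arg F'(2^n)\bigr)$, of the form $a_{n+1}=\bigl(\tfrac12a_n\widetilde a_n-1\bigr)/\bigl(\widetilde a_n+\tfrac12 a_n\bigr)$ with $|\widetilde a_n|\to\infty$ coming from a quantitative equality case of Schwarz--Pick (Lemma~\ref{LM_qualitative}); this is the key new idea and it is absent from your sketch.

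\textbf{The radial-to-non-tangential upgrade in (a)\,$\Rightarrow$\,(c), Step 3.} You invoke ``Fatou-type theory'' to claim that the bounded (hence a difference of positive) harmonic function $\Im H$ having radial limit $0$ must also have non-tangential limit $0$. This is false in general: a bounded harmonic function on $\UD$ can have a radial limit at a boundary point without having a non-tangential limit there (Fatou's theorem gives non-tangential limits almost everywhere, not at a prescribed point). Lindelöf would rescue you only if $H(x)$ converged in the closed strip $\{|\Im w|<\pi\}$, i.e.\ only if you also knew $\Re H(x)$ had a limit in $[-\infty,+\infty]$ in a way that pins down a single prime end; in the case $\Re H(x)\to-\infty$ this fails. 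The paper instead goes the other way around: it first proves that $F'(\zeta)/|F'(\zeta)|\to1$ \emph{non-tangentially} by a Lindelöf-type lemma for limits along hyperbolic disks (Lemma~\ref{lem:DiscreteHypDisksToAngularLimit}), and only then transfers this to the isogonality condition \eqref{EQ_isogonality-for-selfmap} via Lemma~\ref{LM_isogonality}. This avoids ever needing the missing harmonic-function fact.

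\textbf{The $\rho\to1$ regime in (c)\,$\Rightarrow$\,(b).} You correctly flag this as the subtle case, but ``weak conformality propagates to $\Phi(\zeta),\Phi(\xi)$'' is not an argument. The paper bridges exactly this gap by constructing from weak conformality a curve $\gamma$ in $\UH$ whose image under $F$ is a radial ray (Proposition~\ref{PR_Real_axis_in_image}), proving \eqref{EQ:HypDistQuotientAlongGamma} along $\gamma$ by a length-versus-distance estimate, and then using a triangle-inequality sandwich to transfer the estimate from $\gamma$ to arbitrary non-tangential pairs $(\omega_n,\zeta_n)$ with $\hypdist_\UH(\omega_n,\zeta_n)\to\infty$. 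Some device of this kind is needed; it does not follow from the local $\rho$-bounded analysis.

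In summary, your proposed route is organized differently from the paper's (you try to go directly through the function $H=\log(\Phi/w)$ and its harmonic parts, whereas the paper goes through $\arg F'$ and the auxiliary curve $\gamma$), but as written it has two genuine gaps in (a)\,$\Rightarrow$\,(c) -- the convergence of $\arg\Phi'(x_n)$ and the radial-to-non-tangential upgrade -- and one in (c)\,$\Rightarrow$\,(b).
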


In condition~\ref{IT_Th-main1:hdiffquo-to-1}, we adopt the convention that we identify the hyperbolic difference quotient $\hypdist_\UD(\varphi(z),\varphi(w))/\hypdist_\UD(z,w)$ for $w=z$  with $\dhyp \varphi(z)$. This is in accordance with \eqref{EQ:HypDistDef}. Theorem~\ref{TH-main1} shows that equality in the Schwarz\,--\,Pick inequalities~\eqref{EQ:SchwarzPickTwoPoints} and~\eqref{EQ:SchwarzPickOnePoint} is attained at a boundary point ${\sigma\in\UC}$ in the sense that the hyperbolic distortion
$\dhyp \varphi(z)$ tends to~$1$ resp.~the
\textit{quotient} $\,\hypdist_\UD(\varphi(z),\varphi(w))/\hypdist_{\UD}(z,w)$ tends to~$1$ as $z$ and $w$ approach $\sigma$ non-tangentially if and only if the self-map~$\varphi$ is conformal at $\sigma$ in the weak sense. We wish to emphasize that one of the main  contributions of the present paper lies in proving that condition \ref{IT_TH-main1:hder-to-1} implies condition \ref{IT_TH-main1:semireg-cntct-pnt}.

In fact, there are equivalent analogues to each of the three conditions in the above theorem. In order to state them, we need the following definition: a sequence $(z_n)$ in~$\D$ is said to be of \df{bounded (hyperbolic) step}\label{bounded-hyperbolic-step} if there is a constant ${M>0}$ such that $\hypdist_\UD\big(z_n,z_{n+1}\big) \le M$ for all  $n\in\N:=\{1,2,\dots\}$.

\begin{addendum}[\textsl{Further conditions equivalent to conformality at the boundary in the weak sense}] \label{ADD:TH-main1}
	A self-map $\varphi \in \Hol(\UD,\UD)$ is conformal at the point $\sigma \in \partial \D$ in the weak sense if and only if one~--- and hence each~--- of the following conditions holds:
	\begin{alphlistP}
		\item\label{IT_FEQ_a} There is a sequence $(z_n)\subseteq\D$ of bounded step converging non-tangentially to $\sigma$ such that
		$$
		\lim \limits_{n \to \infty}\dhyp \varphi(z_n) =1\, .$$
		\smallskip%
		\item\label{IT_FEQ_c} There is a sequence  $(z_n)\subseteq\D$ of bounded step converging non-tangentially to $\sigma$ such that
		$$
		\frac{\hypdist_{\UD}\big(\varphi(z_n),\varphi(z_{n+1}) \big)}{\hypdist_\UD \big( z_n,z_{n+1} \big)} \to 1 \quad\text{as~$~n\to+\infty$}\,.
		$$
		\smallskip%
		\item\label{IT_FEQ_b} The following two angular limits exist:
		$$
		\angle \lim \limits_{z \to \sigma} \varphi(z) \in \partial \D \quad \text{and} \quad \angle \lim \limits_{z \to \sigma} \arg \varphi'(z) \in \Real\, .
		$$
	\end{alphlistP}
\end{addendum}

\begin{remark} \label{REM:TH-main1}
	The equivalence of \ref{IT_FEQ_b} and \ref{IT_TH-main1:semireg-cntct-pnt}  follows from the results obtained by Yamashita in \cite{Yamashita}, who considered the more general situation that $\varphi : \D \to \C$ is holomorphic and non-constant, see Remark~\ref{RM_Yamashita} for the details. Since, in general,~$\varphi'$ is \textit{not} required to be non-vanishing in~$\UD$, it is however necessary to explain in which sense the existence of $\anglim_{z \to \sigma} \arg \varphi'(z)$ is to be understood. This part of condition~\ref{IT_FEQ_b} means that for each Stolz region ${S\subseteq\UD}$ with vertex at~$\sigma$, the following holds: there exists ${\varepsilon>0}$ such that ${\varphi'(z)\neq0}$ for all ${z\in S_\varepsilon}:={\{z\in S\colon |z-\sigma|<\varepsilon\}}$, and the limit of $\arg \varphi'(w)$ as ${S_\varepsilon\ni z\to\sigma}$ exists in the topology of ${\Real/(2\pi\mathbb Z)}$.
\end{remark}

\begin{remark}[Weak Conformality and the Visser\,--\,Ostrowski condition] \label{REM:WeakConformalityVSVisserOstrowski}
		Under the additional assumptions that $\anglim_{z\to\sigma}\varphi(z)=:\varphi(\sigma)$ exists and  ${\varphi(\sigma)\in\UC}$, the conformality of~$\varphi \in \Hol(\D,\D)$ at~$\sigma$ in the weak sense is equivalent to the classical \textbf{Visser\,--\,Ostrowski} condition:
		\begin{equation} \label{EQ:VisserOstrowski}
			\anglim \limits_{z \to \sigma}
			\frac{(z-\sigma)\varphi'(z)}{\varphi(z)-\varphi(\sigma)}=1 \, .
		\end{equation}
		This is a rather simple fact, which immediately follows, e.g., from Lemma~\ref{LM_isogonality} included in the Appendix. Up to our best knowledge, it has not been, however, known before. It is also worth mentioning that without the hypotheses  $\varphi(\UD)\subset\UD$ and $\varphi(\sigma)\in\UC$, condition~\eqref{EQ:VisserOstrowski} is weaker than isogonality, i.e. it does not imply the existence of $\anglim_{z\to\sigma}\arg\varphi'(z)$; see Section~\ref{SUB:terminology} and Remark~\ref{RM_Yamashita} for more details.

Note also that the above Visser\,--\,Ostrowski condition easily implies that $\dhyp \varphi(z)$ tends to~$1$ if ${z \to \sigma}$ radially, i.e. condition~\ref{IT_TH-main1:hder-to-1} in Theorem~\ref{TH-main1}.
\end{remark}

Using Theorem~\ref{TH-main1}, we are further able to characterize conformality in the strong sense of a self-map $\varphi \in \Hol(\UD,\UD)$ in terms of the non-tangential boundary behaviour of the hyperbolic distortion~$\dhyp \varphi(z)$. In particular, this gives a complete answer to~\cite[Problem~5.3]{FDO}. Similarly to Theorem~\ref{TH-main1},  we also provide an equivalent two-point condition in terms of the hyperbolic distances. Moreover, we can add an operator-theoretic condition in terms of the non-tangential boundary behavior of the reproducing kernel functions~$k_z^\varphi$ in the de Branges\,--\,Rovnyak space induced by~$\varphi$ (for the precise definitions and terminology used in condition~\ref{IT_TH-main2:operator-theory-liminf1} of the statement, see Section~\ref{SEC:Operatortheory}).
\newpage
\begin{theorem}\label{TH-main2}
	Let $\varphi\in\Hol(\UD,\UD)$ and $\sigma\in\partial\D$. Then the following  conditions are pairwise equivalent:
	\begin{alphlist}
		\item\label{IT_TH-main2:int-conv}
		$\displaystyle\int_0^1\big(1-\dhyp \varphi(r\sigma)\big)\,\lambda_\UD(r\sigma) \di r~<+\infty~$.\\[2mm]
		\item\label{IT_TH-main2:BetsakosKaramanlis-angular}
		$\displaystyle \angle \lim \limits_{z,w \to \sigma}\big(\,\hypdist_\UD\big(z,w\big)-\hypdist_\UD\big(\varphi(z),\varphi(w)\big)\,\big)\,=\,0$.
		\smallskip%

		\item\label{IT_TH-main2:operator-theory-liminf1}
		$\displaystyle \angle \lim_{z,w\to \sigma}\frac{\left|\langle k_z^\varphi, k_w^\varphi \rangle_\varphi \right\vert}{\left\Vert k_z^\varphi \right\Vert_\varphi  \left\Vert k_w^\varphi \right\Vert_\varphi} = 1$.\\[2mm]
		\item\label{IT_TH-main2:regular-cntct-pnt}
		The self-map $\varphi$ is conformal at the point~$\sigma~$ in the strong sense.
	\end{alphlist}
\end{theorem}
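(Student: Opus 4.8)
\noindent The plan is to first record a pointwise identity for the de Branges\,--\,Rovnyak kernels, then transfer everything to the half-plane model and combine a monotonicity property along the inner normal with the Julia\,--\,Wolff\,--\,Carath\'eodory theorem (JWC) and Theorem~\ref{TH-main1}.

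\smallskip
\noindent\emph{Step 1: the identity (b)\,$\Leftrightarrow$\,(c).} From $\|k_z^\varphi\|_\varphi^2=(1-|\varphi(z)|^2)/(1-|z|^2)$, $\langle k_z^\varphi,k_w^\varphi\rangle_\varphi=(1-\overline{\varphi(z)}\varphi(w))/(1-\overline{z}w)$, and the algebraic identity $|1-\overline{a}b|^2-|a-b|^2=(1-|a|^2)(1-|b|^2)$ applied to the pairs $(z,w)$ and $(\varphi(z),\varphi(w))$, one computes
\[
\frac{\bigl|\langle k_z^\varphi,k_w^\varphi\rangle_\varphi\bigr|}{\|k_z^\varphi\|_\varphi\,\|k_w^\varphi\|_\varphi}=\frac{\cosh\bigl(\hypdist_\UD(\varphi(z),\varphi(w))/2\bigr)}{\cosh\bigl(\hypdist_\UD(z,w)/2\bigr)}\,\le\,1 .
\]
Since $\cosh s/\cosh t\to1$ for $s\ge t\ge 0$ precisely when $s-t\to0$, this identity gives (b)\,$\Leftrightarrow$\,(c).

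\smallskip
\noindent\emph{Step 2: reduction.} Each of (a), (b), (c) forces $\dhyp\varphi(z_n)\to1$ along some sequence $(z_n)$ of bounded hyperbolic step tending non-tangentially to $\sigma$: for (a) because the weight $\lambda_\UD(r\sigma)$ is non-integrable near $\sigma$, so $\{r:\dhyp\varphi(r\sigma)\le 1-\varepsilon\}$ has arbitrarily small hyperbolic measure near $\sigma$ and cannot cover hyperbolic segments of unit length; for (b) (hence (c), by Step~1) by testing on pairs $z=r\sigma$, $w$ with $\hypdist_\UD(z,w)$ a small fixed constant and using $\hypdist_\UD(\varphi(z),\varphi(w))\le\hypdist_\UD(z,w)\sup_{\hsegment{z,w}}\dhyp\varphi$. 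By Addendum~\ref{ADD:TH-main1}\,\ref{IT_FEQ_a}, $\varphi$ is then conformal at $\sigma$ in the weak sense, so $\omega:=\anglim_{z\to\sigma}\varphi(z)\in\UC$ exists, and $\anglim_{z\to\sigma}\arg\varphi'(z)$ exists (Addendum~\ref{ADD:TH-main1}\,\ref{IT_FEQ_b}). Conjugating by automorphisms and applying the Cayley transform I pass to $\psi\in\Hol(\UH,\UH)$ with $\sigma,\omega$ carried to $\infty$; then $r\sigma\to\sigma$ becomes $iy\to+\infty$, $\lambda_\UD(r\sigma)\,\di r$ becomes $\di y/y$, the quantities $\dhyp\varphi$, $\hypdist_\UD$ and the kernel ratio of Step~1 are conformally invariant, and weak conformality becomes $\arg\psi(iy)\to\pi/2$, $\arg\psi'(iy)\to0$ as $y\to+\infty$.

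\smallskip
\noindent\emph{Step 3: monotonicity, and (d)\,$\Rightarrow$\,(a), (d)\,$\Rightarrow$\,(b).} Since $\dhyp\psi\le1$, we have $\frac{\di}{\di y}\log\frac{\Im\psi(iy)}{y}=\frac1y\bigl(\dhyp\psi(iy)\cos\arg\psi'(iy)-1\bigr)\le0$, so $\alpha:=\lim_{y\to+\infty}\Im\psi(iy)/y$ exists in $[0,\Im\psi(i)]$; by JWC, $\alpha>0$ if and only if (d) holds (in fact $\alpha=1/|\varphi'(\sigma)|$). Integrating the displayed derivative,
\begin{align*}
\log\frac{\Im\psi(i)}{\alpha}&=\int_1^\infty\frac{1-\dhyp\psi(iy)\cos\arg\psi'(iy)}{y}\,\di y\\
&=\int_1^\infty\frac{1-\dhyp\psi(iy)}{y}\,\di y+\int_1^\infty\frac{\dhyp\psi(iy)\bigl(1-\cos\arg\psi'(iy)\bigr)}{y}\,\di y ,
\end{align*}
a sum of two non-negative integrals whose first summand is the integral in~(a); hence (d)\,$\Rightarrow$\,(a). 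For (d)\,$\Rightarrow$\,(b), rescale so that $\alpha=1$; JWC gives $\psi(w)/w\to1$ non-tangentially, hence $|\psi(w)-w|=|w|\cdot o(1)=o(\Im w)$ and $\hypdist_\UH(\psi(w),w)\to0$ for non-tangential $w\to\infty$, so that $\bigl|\hypdist_\UH(w,w')-\hypdist_\UH(\psi(w),\psi(w'))\bigr|\le\hypdist_\UH(w,\psi(w))+\hypdist_\UH(w',\psi(w'))\to0$, which is~(b).

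\smallskip
\noindent\emph{Step 4: the two substantial implications (a)\,$\Rightarrow$\,(d) and (c)\,$\Rightarrow$\,(d).} Together with Steps~1--3 these close the cycle of pairwise equivalences (indeed (a)\,$\Leftrightarrow$\,(d), (d)\,$\Rightarrow$\,(b)\,$\Leftrightarrow$\,(c)\,$\Rightarrow$\,(d)). For (a)\,$\Rightarrow$\,(d): by the displayed identity in Step~3 it suffices, given~(a), to show $\int_1^\infty y^{-1}\bigl(1-\cos\arg\psi'(iy)\bigr)\,\di y<\infty$, equivalently the square-summability $\int_1^\infty y^{-1}|\arg\psi'(iy)|^2\,\di y<\infty$ of the normal boundary values of $\arg\psi'$. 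Because $\log|\psi'|$ and $\arg\psi'$ are conjugate harmonic functions near $\infty$, and $\log|\psi'(iy)|$ differs from $\log\dhyp\psi(iy)$ only by the non-increasing quantity $\log\frac{\Im\psi(iy)}{y}$, hypothesis~(a) controls the oscillation of $\log|\psi'|$ along the normal ray; a Borel\,--\,Carath\'eodory type estimate for the conjugate function then delivers the required $L^2$-bound, most transparently after mapping to the strip $\{0<\Im\zeta<\pi\}$ by $\zeta\mapsto\log\psi(e^\zeta)$, under which the ray becomes the mid-line (where the hyperbolic density is constant). This quantitative passage (upgrading ``$\dhyp\varphi(r\sigma)\to1$ in the $L^1$-sense of~(a)'' to ``$\arg\varphi'$ admits a boundary value with a comparable $L^2$-decay'') is the step I expect to be the main obstacle. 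For (c)\,$\Rightarrow$\,(d): asymptotic equality in the Cauchy\,--\,Schwarz inequality forces the unit vectors $k_z^\varphi/\|k_z^\varphi\|_\varphi$ to become asymptotically colinear in $\Hbspace$ as $z,w\to\sigma$ non-tangentially, and a Hilbert-space argument in the de Branges\,--\,Rovnyak space shows this entails $\sup_{0<r<1}\|k_{r\sigma}^\varphi\|_\varphi<\infty$, i.e.\ $\liminf_{z\to\sigma}(1-|\varphi(z)|^2)/(1-|z|^2)<\infty$, which is~(d) by JWC. (Metrically, one may instead deduce (b)\,$\Rightarrow$\,(d): condition~(b) says $\psi$ is asymptotically an isometry of $\UH$ near $\infty$, hence asymptotically affine there, forcing $\psi(w)\sim\alpha w+\beta$ with $\alpha>0$.)
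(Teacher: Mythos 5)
Your Steps 1--3 are essentially correct and contain two observations that are valid alternatives to pieces of the paper's argument: the reformulation in Step~1 of the normalized kernel inner product as $\cosh\!\big(\hypdist_\UD(\varphi(z),\varphi(w))/2\big)\big/\cosh\!\big(\hypdist_\UD(z,w)/2\big)$ is correct (it is equivalent to the paper's identity~\eqref{EQ_kernel_condition_is_invariant_Julia} via $1-\varrho^2=\operatorname{sech}^2(\hypdist/2)$); and the monotonicity formula $\frac{\di}{\di y}\log\frac{\Im\psi(iy)}{y}=\frac1y\big(\dhyp\psi(iy)\cos\arg\psi'(iy)-1\big)\le0$ in Step~3 gives a clean direct proof of \ref{IT_TH-main2:regular-cntct-pnt}\,$\Rightarrow$\,\ref{IT_TH-main2:int-conv} and identifies the missing non-negative term $\int y^{-1}\dhyp\psi(iy)\big(1-\cos\arg\psi'(iy)\big)\,\di y$. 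The sketch of \ref{IT_TH-main2:operator-theory-liminf1}\,$\Rightarrow$\,\ref{IT_TH-main2:regular-cntct-pnt} via weak precompactness in $\Hbspace$ matches the paper's operator-theoretic argument in spirit, though it is not spelled out.

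However, the key implication \ref{IT_TH-main2:int-conv}\,$\Rightarrow$\,\ref{IT_TH-main2:regular-cntct-pnt} has a genuine gap, which you flag yourself. Your proposed route --- treat $\arg\psi'$ as the harmonic conjugate of $\log|\psi'|$ and extract an $L^2$-decay of $\arg\psi'$ along the ray from the $L^1$-decay of $1-\dhyp\psi$ via a Borel\,--\,Carath\'eodory--type estimate --- is not an argument: harmonic conjugation relates \emph{boundary} values, whereas hypothesis~\ref{IT_TH-main2:int-conv} controls $u=\log|\psi'|$ only along a single \emph{interior} ray, and this does not determine the conjugate $v=\arg\psi'$ there (one can add to~$u$ any harmonic function vanishing on that ray, which alters~$v$ on it arbitrarily). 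The paper's proof avoids this difficulty entirely: first, Proposition~\ref{prop:4} together with Theorem~\ref{TH-main1} upgrades~\ref{IT_TH-main2:int-conv} to weak conformality; then Proposition~\ref{PR_Real_axis_in_image} constructs the Euclidean-arclength parametrization $\gamma$ of the preimage of the positive real half-line under the local inverse of~$F$, and Proposition~\ref{PR_weak} converts the integral~\ref{IT_TH-main2:int-conv} into a lower bound on $F(\gamma(t))/\Re\gamma(t)$, giving $F'(\infty)>0$ directly by JWC. This ``dual'' geometric use of the preimage curve is the heart of the paper's proof and is entirely absent from your proposal. So your cycle only establishes \ref{IT_TH-main2:BetsakosKaramanlis-angular}\,$\Leftrightarrow$\,\ref{IT_TH-main2:operator-theory-liminf1}\,$\Leftrightarrow$\,\ref{IT_TH-main2:regular-cntct-pnt} and \ref{IT_TH-main2:regular-cntct-pnt}\,$\Rightarrow$\,\ref{IT_TH-main2:int-conv}; condition~\ref{IT_TH-main2:int-conv} is not shown to imply the others.

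One further small slip: in Step~2 the bound $\hypdist_\UD\big(\varphi(z),\varphi(w)\big)\le\hypdist_\UD(z,w)\,\sup_{\hsegment{z,w}}\dhyp\varphi$ goes the wrong way for concluding $\dhyp\varphi\to1$ from the difference tending to~$0$; you actually need the sharp two-sided estimate of Proposition~\ref{prop:Ineq_ApprUniv}. This is fixable, but as written the passage from~\ref{IT_TH-main2:BetsakosKaramanlis-angular} to condition~\ref{IT_FEQ_c} of Addendum~\ref{ADD:TH-main1} does not follow.
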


As with Theorem~\ref{TH-main1}, one can add further  conditions equivalent to those listed in Theorem~\ref{TH-main2}.

\begin{addendum}[\textsl{Further conditions equivalent to conformality at the boundary in the strong sense}]  \label{ADD:TH-main2Condition(d)}
	A self-map $\varphi \in \Hol(\UD,\UD)$ is conformal at the point $\sigma \in \partial \D$ in the strong sense if and only if  one~--- and hence both~--- of the following conditions holds:\smallskip
	\setcounter{alphlistcounter}{1}
	\begin{alphlistP}
		\item \label{IT_TH-main2:BetsakosKaramanlis-radial}
		$\hypdist_\UD\big(r\sigma,s\sigma\big)-\hypdist_\UD\big(\varphi(r\sigma),\varphi(s\sigma)\big) \to 0~$ as $~(0,1)\ni r,s\to1$.
		\medskip

		\item \label{IT_TH-main2:operator-theory}
		$\displaystyle \angle \liminf_{z,w\to \sigma}\frac{\left|\langle k_z^\varphi, k_w^\varphi \rangle_\varphi \right\vert}{\left\Vert k_z^\varphi \right\Vert_\varphi  \left\Vert k_w^\varphi \right\Vert_\varphi} > 0$.
	\end{alphlistP}
\end{addendum}
\begin{remark}
The major aspect of Theorem~\ref{TH-main2} is probably the equivalence between  \ref{IT_TH-main2:int-conv} and \ref{IT_TH-main2:regular-cntct-pnt}.
A~proof of  this  equivalence can also be found in a recent preprint of O.~Ivrii and M.~Urba\'{n}ski~\cite[Theorem~B.1]{Ivr}.
The approach developed in~\cite{Ivr} is of strong geometric flavour; in particular, it uses the relation of the hyperbolic distortion~$\dhyp\varphi$ to the geodesic curvature of the image ${\varphi([0,\sigma))}$ of the geodesic ray ${[0,\sigma)}$. Our approach is different, and dual in the sense that we work with the \emph{preimage}~$\gamma$ of ${[0,\sigma)}$. As we shall see in Section~\ref{sec_proof-main2}, the key role in our proof that condition~\ref{IT_TH-main2:int-conv} implies strong conformality is played by  several properties of~$\gamma$ which follow directly from the weak conformality, see Proposition~\ref{PR_Real_axis_in_image}. In fact, apart from the operator-theoretic aspect, Theorem~\ref{TH-main2} appears to be a fairly direct consequence of the more general Theorem~\ref{TH-main1} and the techniques we employ in its proof.
\end{remark}
\begin{remark}
	The equivalence \ref{IT_TH-main2:int-conv}~$\Longleftrightarrow$~\ref{IT_Th-main1:hdiffquo-to-1} in Theorem~\ref{TH-main2} is a statement purely in terms of the path-metric space $(\UD,\hypdist_\UD)$. The same remark applies to conditions~\ref{IT_TH-main2:int-conv} and~\ref{IT_Th-main1:hdiffquo-to-1} in Theorem~\ref{TH-main1}.  We will discuss these metric aspects of our results in more detail in Section~\ref{SUB:MetricAspects}.
\end{remark}

\begin{remark}
	\label{REM:TH-main2Condition(d)}
	Condition \ref{IT_TH-main2:BetsakosKaramanlis-radial} is closely related to results recently obtained by Betsakos and Karamanlis \cite{BK2022} in their work on the angular derivative problem for univalent mappings.
	We discuss this in greater detail in Section~\ref{SUB:AngDerivativeConformalMaps}. A very non-trivial fact contained in Theorem~\ref{TH-main2} is that condition \ref{IT_TH-main2:BetsakosKaramanlis-radial} is equivalent to the \textit{a priori} much weaker condition~\ref{IT_TH-main2:int-conv}. Indeed,~\ref{IT_TH-main2:int-conv} can be rewritten in integrated form as
	$$
	\hypdist_\UD(r\sigma,s\sigma)-\hyplen_\UD\big(\varphi(\hsegment{s\sigma,r\sigma})\big)\to0 \quad\text{as~$~(0,1)\ni r\ge s\to1$} \, ,
	$$
	where $\hyplen_\UD(\cdot)$ stands for the hyperbolic length of a smooth curve in~$\UD$ and $\hsegment{z,w}$ denotes the hyperbolic segment joining~$z$ and~$w$; clearly,
	$$
	\hyplen_\UD\big(\varphi(\hsegment{s\sigma,r\sigma})\big)\ge \hyplen_\UD\big(\hsegment{\varphi(s\sigma),\varphi(r\sigma)}\big) = \hypdist_\UD\big(\varphi(r\sigma),\varphi(s\sigma)\big).
	$$
\end{remark}
\begin{remark}
 Condition~\ref{IT_TH-main2:BetsakosKaramanlis-angular} in Theorem~\ref{TH-main2}, as compared to its counterpart in Theorem~\ref{TH-main1}, represents another variant of asymptotic equality in the Schwarz\,--\,Pick inequality~\eqref{EQ:SchwarzPickTwoPoints}. Note that \textit{a priori} it does not seem clear that one of these variants is stronger than the other. This is a non-trivial consequence of Theorems~\ref{TH-main1} and~\ref{TH-main2}.
It should be noted however that the equivalence \ref{IT_TH-main2:BetsakosKaramanlis-angular} $\Longleftrightarrow$ \ref{IT_TH-main2:regular-cntct-pnt} in Theorem~\ref{TH-main2} is fairly easy to establish, cf.~Remark \ref{REM:TH-main2(c)VS(d)} for details.
\end{remark}

\begin{remark}
   Condition~\ref{IT_TH-main2:operator-theory-liminf1} in Theorem \ref{TH-main2}  can be regarded as the case of
    non-tangential boundary equality in the Cauchy\,--\,Schwarz inequality for the de Branges\,--\,Rovnyak space of $\varphi$.  In fact, we shall show that the \textit{a priori} weaker condition~\ref{IT_TH-main2:operator-theory} is equivalent to~\ref{IT_TH-main2:regular-cntct-pnt} and a proof of this assertion is given by operator-theoretic methods.
\end{remark}

\begin{remark}
  One may wonder whether the value of the integral
  $$ I(\varphi,\sigma):=\int_0^1\big(1-\dhyp \varphi(r\sigma)\big)\,\lambda_\UD(r\sigma) \di r$$ in condition \ref{IT_TH-main2:int-conv} of Theorem \ref{TH-main2} is related to the value $\varphi'(\sigma)$ of the angular derivative of $\varphi$ at $\sigma$. In fact, it is easy to see that
  $$ I(\varphi,\sigma) \le 2 \log |\varphi'(\sigma)| \, ,$$
  cf.~the proof of Proposition \ref{PR_weak} below. However, an estimate of the form $I(\varphi,\sigma) \ge c \log |\varphi'(\sigma)|+d$ with constants $c$ and $d$ independent of the function $\varphi$ cannot hold. This can be seen by taking the degree two Blaschke products
$$ \varphi_a(z):=z \frac{z-a}{1-a z} \, , \quad 0 \le a<1 \, .$$
Then $|\varphi_a'(1)|=2/(1-a)$, but $I(\varphi_a,1)=\log(1+a)+\log 2$. The authors thank Artur Nicolau for pointing this out to them.
\end{remark}

In the special case of a univalent function mapping $\UD$ onto a symmetric subdomain of~$\UD$, the equivalence between conditions~\ref{IT_TH-main2:int-conv} and~\ref{IT_TH-main2:regular-cntct-pnt} in Theorem~\ref{TH-main2} was recently established in~\cite[Section 6.3]{our-in-RMI}.
In the same work, the equivalence between conditions~\ref{IT_TH-main2:int-conv} and~\ref{IT_TH-main2:regular-cntct-pnt} in Theorem~\ref{TH-main2} was also proved for a special class of univalent holomorphic self-maps of~$\UD$ associated to the backward dynamics of holomorphic semiflows in $\UD$; see \cite[Theorem~1.1]{our-in-RMI}.
Motivated by the latter result, we apply our methods to study and solve a problem arising in the classical field of backward dynamics in discrete iteration of holomorphic self-maps in the disk.

Indeed, the notion of conformality at a boundary point, as introduced in Definition~\ref{DF_conformality-at-the-boundary}, turns out to be highly relevant for the study of holomorphic dynamical systems. In the theory of holomorphic iteration in the unit disk, points ${\sigma\in\UC}$ at which condition~\eqref{EQ_contact-point-condition} holds are referred to as \emph{contact points}, and those of them with ${\varphi(\sigma)=\sigma}$ are called \emph{boundary fixed points}. Furthermore, boundary points $\sigma$ at which $\varphi$ is conformal in the strong sense are known as \emph{regular} contact points and (boundary) \emph{regular} fixed points, respectively. It is worth emphasizing that the role played by boundary regular fixed points is similar, in a sense, to that of interior fixed points (which for a holomorphic self-map ${\varphi\neq\id_\UD}$, clearly, can exist at most one). This principle fully applies to backward dynamics of holomorphic self-maps, the study of which currently attracts much attention, see e.g. \cite{Abate_Raissy_2011, Arosio2017, AB2016, AG2019, AFGK2024, BackwSemigr, Ivr, Poggi-Corradini, Poggi-Corradini2003} and which is intimately linked to repulsive fixed points (i.e. boundary regular fixed points~$\sigma$ with ${\varphi'(\sigma)>1}$).

A renowned result of Poggi-Corradini \cite{Poggi-Corradini,Poggi-Corradini2003} asserts that for any given repulsive fixed point ${\sigma\in\UC}$, there is an associated open subset of the unit disk in which the backward dynamics of~$\varphi$ can be linearized via an (essentially unique) holomorphic map\footnote{Curiously enough, the map constructed by Poggi-Corradini appeared earlier, although implicitly and in a rather special cases (and moreover, restricted to real values of the variable) in the theory of branching processes in connection to some limit theorems~\cite{Galton-Watson,Grey}.} possessing boundary conformality property at the preimage of~$\sigma$, \emph{a priori} in the weak sense. It seems natural to ask in which cases this intertwining map is conformal at the same boundary point in the \emph{strong} sense. We solve this problem with the help of Theorem~\ref{TH-main2}. Our answer is formulated in the following Theorem~\ref{TH-main3} and Corollary~\ref{CR_fromTH-main3}. For the terminology used in the statements of these results and further details from the theory of holomorphic iteration in~$\UD$ we refer the reader to Section~\ref{SEC:ProofTH3}.

\begin{theorem}\label{TH-main3}
	Let $\sigma \in \UC$ be a repulsive fixed point of~$\varphi\in\Hol(\UD,\UD)$   and let $(z_n)\subseteq\UD$ be a regular backward orbit converging to~$\sigma$. The following statements hold:
	\setcounter{alphlistcounter}{0}
	\begin{alphlist}
		\item \label{IT_THmain3:1}
		There exists ${n_0\in\Natural_0:=\Natural\cup\{0\}}$ such that ${\varphi'(z_n)\neq0}$ for all ${n>n_0}$.
		\smallskip%
		\item \label{IT_THmain3:2}
		Moreover, $\dhyp\varphi^{\circ m}(z_{n_0+m})\to\mu$ as ${m\to+\infty}$
		for some $\mu\in(0,1]$.
		\smallskip%
		\item  \label{IT_THmain3:3}
		The pre-model for~$\varphi$ at~$\sigma$ is regular if and only if
		\begin{equation*}\label{EQ_ekte-betingelse-TH-main3}
			\sum_{m=1}^{\infty}\big(\dhyp\varphi^{\circ m}(z_{n_0+m})-\mu\big)~<~+\,\infty.
		\end{equation*}
	\end{alphlist}
\end{theorem}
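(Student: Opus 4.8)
The plan is to run the whole argument through the Poggi--Corradini pre-model $(\UD,h,\psi)$ attached to the repulsive fixed point $\sigma$ (see \cite{Poggi-Corradini,BCD-Book}): here $\psi\in\Aut(\UD)$ is hyperbolic with repelling fixed point $\tau\in\UC$, $h\circ\psi=\varphi\circ h$, and $h$ has angular limit $\sigma$ at $\tau$ and is conformal at $\tau$ in the weak sense; by definition the pre-model is \emph{regular} precisely when $h$ is conformal at $\tau$ in the strong sense. I will use two standard facts from backward dynamics: a regular backward orbit converges non-tangentially to $\sigma$; and the given orbit $(z_n)$ is realized by the pre-model, i.e.\ there is a backward orbit $(p_m)_{m\ge0}$ of $\psi$ (so $p_m=\psi^{\circ(-m)}(p_0)$, converging non-tangentially to $\tau$ with bounded hyperbolic step) with $h(p_m)=z_{n_0+m}$ for all $m\ge0$.

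Part~\ref{IT_THmain3:1} is immediate: strong conformality of $\varphi$ at $\sigma$ entails weak conformality, hence, by Addendum~\ref{ADD:TH-main1}\,\ref{IT_FEQ_b} and Remark~\ref{REM:TH-main1}, $\varphi'$ is zero-free on a truncated Stolz region at $\sigma$; since $(z_n)$ eventually enters every fixed Stolz region at $\sigma$, a suitable $n_0$ works. For part~\ref{IT_THmain3:2} I would first use the chain rule for $\dhyp$ together with $\varphi^{\circ m}(z_{n_0+m})=z_{n_0}$ to obtain $\dhyp\varphi^{\circ m}(z_{n_0+m})=\prod_{j=1}^{m}\dhyp\varphi(z_{n_0+j})$, a product of numbers in $(0,1]$ (positivity from \ref{IT_THmain3:1}, the bound from Schwarz--Pick); being non-increasing in $m$, it converges to some $\mu\in[0,1]$. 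Next, applying $\dhyp$ to $h\circ\psi^{\circ m}=\varphi^{\circ m}\circ h$ at the point $p_m$ and using $\dhyp\psi^{\circ m}\equiv1$ gives the key identity
\[
\dhyp h(p_0)=\dhyp\varphi^{\circ m}(z_{n_0+m})\,\dhyp h(p_m),\qquad m\ge0.
\]
By Theorem~\ref{TH-main1}, the weak conformality of $h$ at $\tau$ and $p_m\to\tau$ non-tangentially yield $\dhyp h(p_m)\to1$, so $\mu=\dhyp h(p_0)$; moreover $h'(p_0)\neq0$, since otherwise the identity would force $\dhyp h(p_m)\equiv0$, contradicting $\dhyp h(p_m)\to1$. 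Hence $\mu=\dhyp h(p_0)\in(0,1]$.

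For part~\ref{IT_THmain3:3}, the identity rewrites as $\dhyp\varphi^{\circ m}(z_{n_0+m})-\mu=\mu\,(1-\dhyp h(p_m))/\dhyp h(p_m)$, so, since $\dhyp h(p_m)\to1$, the series in \ref{IT_THmain3:3} converges if and only if $\sum_{m}\bigl(1-\dhyp h(p_m)\bigr)<+\infty$. By Theorem~\ref{TH-main2}\,\ref{IT_TH-main2:int-conv}, regularity of the pre-model is equivalent to $\int_0^1\bigl(1-\dhyp h(r\tau)\bigr)\lambda_\UD(r\tau)\,\di r<+\infty$; thus everything reduces to showing that this radial integral and the series $\sum_{m}\bigl(1-\dhyp h(p_m)\bigr)$ converge together. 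I would prove this by a Harnack-type comparison near $\tau$: the sequence $(p_m)$ lies at bounded---indeed constant---hyperbolic distance from the geodesic ray $[0,\tau)$ with uniformly bounded step; by weak conformality of $h$ and Addendum~\ref{ADD:TH-main1}\,\ref{IT_FEQ_b}, the function $-\log\dhyp h$ is a non-negative superharmonic function that is smooth near $\tau$ in Stolz regions and tends to $0$ there; and, using the control of $\dhyp h$ along and transversal to geodesics landing at $\tau$ furnished by Proposition~\ref{PR_Real_axis_in_image} and its proof, one gets $1-\dhyp h(p_m)$ comparable to $1-\dhyp h(\gamma(t_m))$ along the ray, with $t_{m+1}-t_m$ bounded above and away from $0$, so that the sum matches the integral. (Alternatively, one can first show $1-\dhyp h(p_m)$ is comparable to the tail $\sum_{j>m}\bigl(1-\dhyp\varphi(z_{n_0+j})\bigr)$, which reduces the claim to the statement that the pre-model is regular if and only if $\sum_{j}j\,\bigl(1-\dhyp\varphi(z_{n_0+j})\bigr)<+\infty$.)

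The parts that amount to bookkeeping with the pre-model identity are \ref{IT_THmain3:1} and \ref{IT_THmain3:2}; the genuine obstacle is the last comparison in \ref{IT_THmain3:3}, namely passing between the \emph{continuous} integral criterion of Theorem~\ref{TH-main2} along $[0,\tau)$ and the \emph{discrete} criterion along the regular backward orbit. This step requires controlling how much the hyperbolic distortion $\dhyp h$ can vary over bounded hyperbolic distances as $\tau$ is approached non-tangentially, which does not follow from the Schwarz--Pick bound alone and must be extracted from the finer structural properties of $\dhyp$ developed in the proof of Theorem~\ref{TH-main2}.
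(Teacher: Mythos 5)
Your overall architecture matches the paper's: you run the argument through the Poggi--Corradini pre-model, use the chain rule / intertwining relation to get the key identity $\dhyp h(p_0)=\dhyp\varphi^{\circ m}(z_{n_0+m})\cdot\dhyp h(p_m)$ (the paper's \eqref{EQ_f<->F} in half-plane notation), invoke Theorem~\ref{TH-main1} to get $\dhyp h(p_m)\to1$, and reduce part~\ref{IT_THmain3:3} to the equivalence between $\sum_m\bigl(1-\dhyp h(p_m)\bigr)<+\infty$ and the radial integral criterion of Theorem~\ref{TH-main2}\,\ref{IT_TH-main2:int-conv}. Parts~\ref{IT_THmain3:1} and~\ref{IT_THmain3:2} are fine (your argument for~\ref{IT_THmain3:1} via weak conformality of~$\varphi$ itself is a valid, slightly different route from the paper's, which argues instead through the nonvanishing of $F'$ along the $\psi$-orbit).

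The genuine gap is the continuous-to-discrete comparison you flag at the end. Your proposed route through a Harnack-type inequality for $-\log\dhyp h$ is both more delicate than necessary and not carried out: $-\log\dhyp h$ is superharmonic, not harmonic, and a two-sided Harnack comparison for nonnegative superharmonic functions at a controlled rate near the boundary is not elementary, nor is it available from the paper. Moreover your diagnosis that the required control ``does not follow from the Schwarz--Pick bound alone and must be extracted from the finer structural properties of $\dhyp$ developed in the proof of Theorem~\ref{TH-main2}'' is not accurate. What is needed is precisely a Schwarz--Pick--type estimate, just applied to $\dhyp\varphi$ rather than $\varphi$: the Golusin/Beardon--Minda inequality \eqref{eq:Golusin2},
\begin{equation*}
e^{-2\hypdist_\UD(z,w)}\le\frac{1-\dhyp\varphi(z)}{1-\dhyp\varphi(w)}\le e^{2\hypdist_\UD(z,w)},
\end{equation*}
which is established in Section~\ref{SEC_Properties-Hyperbolic-Distortion} and is exactly what underlies Propositions~\ref{prop:4}, \ref{prop:5} and \ref{PR_cont<=>discrete}. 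It gives pointwise two-sided control of $1-\dhyp h$ over bounded hyperbolic displacements, which is all you need: move from the $\psi$-orbit $(p_m)$ to the geodesic ray at uniformly bounded hyperbolic distance, then compare the sum to the integral over segments of comparable hyperbolic length (this is literally Proposition~\ref{PR_cont<=>discrete}). Replacing your Harnack sketch with this estimate closes the gap and recovers the paper's argument.
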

As a consequence, we obtain another condition equivalent to the regularity of the pre-model, which involves hyperbolic distances instead of the hyperbolic distortion.
\begin{corollary}\label{CR_fromTH-main3}
	Under the hypotheses of Theorem~\ref{TH-main3}, the pre-model for~$\varphi$ at~$\sigma$ is regular if and only if
	\begin{equation*}\label{EQ_fromTH-main3}
		\sum_{n=1}^{\infty}\big(\rho-\hypdist_\UD(z_n,z_{n+1})\big)~<~+\,\infty,\quad \rho:=\lim_{n\to+\infty}\hypdist_\UD(z_n,z_{n+1})
             = 2\arctanh\frac{\varphi'(\sigma)-1}{\big|e^{2i\theta}\varphi'(\sigma)+1\big|},
	\end{equation*}
	where $\theta:=\lim_{n\to+\infty}\arg(1-\overline\sigma z_n)$, for some --- and hence any --- regular backward orbit $(z_n)$ converging to~$\sigma$.
\end{corollary}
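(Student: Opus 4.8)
The plan is to combine Theorem~\ref{TH-main3}\,\ref{IT_THmain3:3} with two ``discretization'' estimates and a Harnack-type inequality for the hyperbolic distortion. Fix a regular backward orbit $(z_n)$ converging to $\sigma$, put $d_n:=\dhyp\varphi(z_n)$ and $s_n:=\hypdist_\UD(z_n,z_{n+1})$, and let $\mu\in(0,1]$ and $n_0$ be as in Theorem~\ref{TH-main3}. By Theorem~\ref{TH-main3}\,\ref{IT_THmain3:3} the corollary reduces to the real-variable assertion
\[
\sum_{n}\big(\rho-s_n\big)<+\infty\quad\Longleftrightarrow\quad\sum_{m}\big(\dhyp\varphi^{\circ m}(z_{n_0+m})-\mu\big)<+\infty
\]
together with the identification of $\rho$, and my plan is to show that \emph{both} series are comparable to $\sum_{n}n\,(1-d_n)$. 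For the right-hand series this is immediate: the chain rule and $\varphi^{\circ j}(z_{n_0+m})=z_{n_0+m-j}$ give $\dhyp\varphi^{\circ m}(z_{n_0+m})=\prod_{k=n_0+1}^{n_0+m}d_k$, and since $\mu>0$ forces $\sum_k(1-d_k)<\infty$ one gets $\dhyp\varphi^{\circ m}(z_{n_0+m})-\mu\asymp\sum_{k>n_0+m}(1-d_k)$ for large $m$; summing in $m$ and switching the order of summation turns this into $\sum_k(k-n_0-1)^+(1-d_k)\asymp\sum_k k\,(1-d_k)$.

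For the left-hand series, applying the Schwarz--Pick inequality to the consecutive pairs $\varphi(z_{n+1})=z_n$, $\varphi(z_n)=z_{n-1}$ yields $s_{n-1}\le s_n$, so $(s_n)$ increases to $\rho$ and, switching the order of summation again, $\sum_n(\rho-s_n)=\sum_{j\ge2}(j-1)(s_j-s_{j-1})\asymp\sum_j j\,(s_j-s_{j-1})$. The heart of the matter is then the two-sided estimate $s_n-s_{n-1}\asymp 1-d_n$ for all large $n$. For the lower bound I would use
$s_n-s_{n-1}=\hyplen_\UD\big(\hsegment{z_n,z_{n+1}}\big)-\hypdist_\UD\big(\varphi(z_n),\varphi(z_{n+1})\big)\ge\int_{\hsegment{z_n,z_{n+1}}}\big(1-\dhyp\varphi\big)\lambda_\UD\ge\tfrac{\rho}{2}\big(1-\sup_{\hsegment{z_n,z_{n+1}}}\dhyp\varphi\big)$
for large $n$; for the upper bound I would use that $\varphi$ is univalent on a fixed-size hyperbolic ball $B$ about $z_n$ (this holds for large $n$ because $\anglim_{z\to\sigma}\varphi'(z)=\varphi'(\sigma)\neq0$, a consequence of the Julia--Wolff--Carath\'eodory theorem since $\sigma$ is a regular fixed point, combined with the Noshiro--Warschawski univalence criterion), so that the local inverse $(\varphi|_B)^{-1}$ carries $\hsegment{z_{n-1},z_n}$ back to a curve from $z_n$ to $z_{n+1}$, whence $s_n\le s_{n-1}/\inf\dhyp\varphi$ with the infimum over that curve, i.e.\ $s_n-s_{n-1}\lesssim 1-\inf_{B}\dhyp\varphi$. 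Both bounds thus reduce to comparing $1-\dhyp\varphi$ on a hyperbolic ball of bounded radius about $z_n$ with its central value $1-d_n$.

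This comparison is the main obstacle, and it is a Harnack-type inequality: there is a universal $C$ with $1-\dhyp g(b)\le C\,(1-\dhyp g(a))$ whenever $g\in\Hol(\UD,\UD)$, $\hypdist_\UD(a,b)\le1$, $g'\neq0$ on $B_h(a,2)$, and $\dhyp g$ is close to $1$ there. I would deduce this from the fact that $u:=-\log\dhyp g\ge0$ is smooth where $g'\neq0$ and satisfies $\Delta_h u=1-e^{-2u}$, i.e.\ $\Delta_h u-c\,u=0$ with $0\le c:=(1-e^{-2u})/u\le2$ bounded; hence, by homogeneity of the hyperbolic plane, the classical Harnack inequality of Serrin--Moser for non-negative solutions of uniformly elliptic equations with bounded coefficients applies on balls of a fixed hyperbolic radius with a universal constant, and $1-\dhyp g\asymp u$ where $\dhyp g$ is near $1$. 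The hypotheses needed to invoke this for $g=\varphi$ hold because $\sigma$ is a regular fixed point, hence $\varphi$ is conformal at $\sigma$ in the weak sense, so $\dhyp\varphi(z)\to1$ as $z\to\sigma$ non-tangentially by Theorem~\ref{TH-main1}, and $\varphi'\neq0$ on hyperbolic balls of bounded radius about $z_n$ for $n$ large. Combining everything, $\sum_n(\rho-s_n)\asymp\sum_n n\,(1-d_n)\asymp\sum_m\big(\dhyp\varphi^{\circ m}(z_{n_0+m})-\mu\big)$, which together with Theorem~\ref{TH-main3}\,\ref{IT_THmain3:3} gives the equivalence for the chosen orbit; since that theorem holds for \emph{every} regular backward orbit, the ``and hence any'' clause is automatic.

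It remains to compute $\rho=\lim_n s_n$. I would conjugate by the Cayley map $C$ sending $\sigma$ to $\infty$: then $\psi:=C\circ\varphi\circ C^{-1}\in\Hol(\UH,\UH)$ fixes $\infty$ with $\psi'(\infty)=1/\varphi'(\sigma)$ and is univalent on a conical neighbourhood of $\infty$ (same JWC\,+\,Noshiro--Warschawski argument), so its local inverse gives $C(z_{n+1})=\varphi'(\sigma)\,C(z_n)+o\big(|C(z_n)|\big)$, while $\arg C(z_n)\to\frac{\pi}{2}-\theta$ since $C(z)\sim 2i/(1-\overline\sigma z)$ near $\sigma$ and $\arg(1-\overline\sigma z_n)\to\theta$ (the existence of this slope being part of the theory of regular backward orbits converging to a repulsive fixed point). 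Using the explicit distance formula in $\UH$ and the scaling automorphism $w\mapsto\lambda w$, one gets $\rho=\hypdist_\UH\big(e^{i(\pi/2-\theta)},\varphi'(\sigma)e^{i(\pi/2-\theta)}\big)$, and the elementary computation $\big|e^{i(\pi/2-\theta)}-\varphi'(\sigma)e^{-i(\pi/2-\theta)}\big|=|e^{2i\theta}\varphi'(\sigma)+1|$ yields $\rho=2\arctanh\frac{\varphi'(\sigma)-1}{|e^{2i\theta}\varphi'(\sigma)+1|}$, as claimed. (In the degenerate case $\varphi\in\Aut(\UD)$ the pre-model is trivially regular, $\dhyp\varphi\equiv1$, and the sum is empty, so one may assume $\varphi\notin\Aut(\UD)$, which is what guarantees $u>0$ above.) The only genuinely nontrivial ingredient is the Harnack inequality of the third paragraph; the rest is bookkeeping plus classical Julia--Wolff--Carath\'eodory theory and half-plane geometry.
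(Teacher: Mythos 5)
Your argument takes a genuinely different route from the paper's, and it does work, but it is substantially more elaborate than necessary. The paper proves the equivalence in one stroke: applying Proposition~\ref{prop:Ineq_ApprUniv} to the iterate $\varphi^{\circ(m-n)}$ with the pair $(z_m,z_{m+1})$ gives two-sided constants $0<C_1<C_2<+\infty$ (uniform in $m>n$, because the steps $\hypdist_\UD(z_m,z_{m+1})$ lie in a compact subinterval of $(0,\rho]$) with
$$C_1\big(1-\dhyp\varphi^{\circ(m-n)}(z_m)\big)\le \hypdist_\UD(z_m,z_{m+1})-\hypdist_\UD(z_n,z_{n+1})\le C_2\big(1-\dhyp\varphi^{\circ(m-n)}(z_m)\big),$$
and then letting $m\to\infty$ (noting $\dhyp\varphi^{\circ(m-n)}(z_m)=\mu_m/\mu_n\to\mu/\mu_n$) yields $\rho-\hypdist_\UD(z_n,z_{n+1})\asymp 1-\mu/\mu_n$, whose summability is visibly equivalent to $\sum(\mu_n-\mu)<\infty$ since $\mu_n\to\mu>0$. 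You instead work with single steps of $\varphi$, expand $\dhyp\varphi^{\circ m}$ via the chain rule, and compare both sides to $\sum_n n\,(1-\dhyp\varphi(z_n))$ by two summations by parts. That is a valid alternative decomposition, and the $\rho$-formula computation via the Cayley transform is sound (the paper cites it from Abate's book). The one place where your plan is needlessly heavy is the Serrin--Moser Harnack inequality you bring in to compare $1-\dhyp\varphi$ at hyperbolically close points: this comparison is exactly the content of the Golusin-type estimate \eqref{eq:Golusin2} (a one-line consequence of the Beardon--Minda three-point Schwarz--Pick lemma), which is already in Section~\ref{SEC_Properties-Hyperbolic-Distortion}; indeed Proposition~\ref{prop:Ineq_ApprUniv} applied to $\varphi$ and the pair $(z_n,z_{n+1})$ directly gives $s_n-s_{n-1}\asymp 1-\dhyp\varphi(z_n)$ with no PDE machinery, no appeal to $\varphi'\neq 0$ on a neighbourhood, and no local inverse. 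So your route buys nothing over the paper's, and the Harnack step is an unnecessary and rather fragile detour (requiring $\varphi'\neq 0$ on a ball and $u$ bounded, conditions the Golusin estimate simply does not need).
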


\smallskip

This paper is organized as follows.
In Section~\ref{SEC_Properties-Hyperbolic-Distortion}, we establish several basic properties of the hyperbolic distortion of holomorphic self-maps including refined Schwarz\,--\,Pick-type estimates and a version of Lindel\"of's angular limit theorem for hyperbolic distortion. Besides being potentially interesting in their own right, these results play a crucial role in the proofs of our main results, Theorems~\ref{TH-main1} and~\ref{TH-main2}. The proof of Theorem~\ref{TH-main1}  is given in Section~\ref{SEC:ProofTH1}; the proof of Theorem~\ref{TH-main2} is divided into Sections~\ref{sec_proof-main2} (geometric aspects) and~~\ref{SEC:Operatortheory} (operator-theoretic aspects). In Section~\ref{SEC:ProofTH3}, applications to holomorphic dynamics in the unit disk are developed: first we introduce relevant terminology and give necessary preliminaries from the iteration theory; next, we prove our characterization for the regularity of pre-models, i.e. Theorem~\ref{TH-main3} and Corollary~\ref{CR_fromTH-main3}. The paper closes with Section~\ref{SEC:ConcludingRemarks}, in which we explain in more detail the relation of our results with the boundary Schwarz lemma of Burns and Krantz and related results \cite{BKR2024,FDO,BurnsKrantz1994,KRR2007}, with the recent work of Betsakos and Karamanlis \cite{BK2022} and that of Beardon and Minda \cite{BM2023} as well as the those aspects of Theorem~\ref{TH-main1} and Theorem~\ref{TH-main2} which can be expressed purely in terms of metric geometry.
In order to make the paper self-contained, we collect and prove in an appendix a variety of results about weak conformality at the boundary, which seem essentially well-known to the experts, but for which we have not found precise references to the existing literature.

\section{Basic properties of hyperbolic distortion}\label{SEC_Properties-Hyperbolic-Distortion}
In this section we state and prove several basic properties of the hyperbolic distortion
$$ \dhyp \varphi(z)=\left(1-|z|^2 \right) \frac{|\varphi'(z)|}{1-|\varphi(z)|^2}$$
of a holomorphic self-map $\varphi : \D \to \D$ which will be required for the proofs of Theorem \ref{TH-main1} and Theorem \ref{TH-main2}. We derive these properties from the elegant three--point Schwarz\,--\,Pick lemma of Beardon and Minda \cite{BM2004}.

\addtocontents{toc}{\SkipTocEntry}\subsection{Hyperbolic distortion in the unit disk}\label{SUB:hypdistortion}
Throughout this paper we assume familiarity with standard facts of hyperbolic geometry which can be found e.g.~in \cite{Marco,BM2007, BCD-Book}.
Recall that the \emph{pseudo-hyperbolic distance}
\begin{equation}\label{EQ_pseudohyperbolic-distance}
	\varrho_\UD(z,w)=\left| \frac{z-w}{1-\overline{w}z} \right| \,, \qquad z, w \in \UD\,,
\end{equation}
of the unit disk $\UD$ is related to the \emph{hyperbolic distance} $\hypdist_\UD$ by
$$\hypdist_{\UD}\big(z,w\big)=\log \frac{1+\varrho_\UD\big(z,w\big)}{1-\varrho_\UD\big(z,w\big)}\, .$$
Let $\varphi \in \Hol(\UD, \UD)$ and $z,w \in \UD$ with $z\not=w$. Then
\begin{equation*}\label{eq:hypdifquot}
	\varphi^*(z,w)=\frac{1-\overline{w} z}{z-w} \frac{\varphi(z)-\varphi(w)}{1-\overline{\varphi(w)} \, \varphi(z)}
\end{equation*}
is called the \emph{hyperbolic difference quotient} of $\varphi$, see \cite{BM2007}.
Note that if  $\varphi \not \in \Aut(\UD)$, then $\varphi^*(\cdot,v)$ extends, for each fixed ${v \in \UD}$,  to a holomorphic self-map of $\UD$; hence the Schwarz\,--\,Pick lemma implies
\begin{equation}\label{eq:sp-hypdifquot}
	\varrho_\UD\big(\varphi^*(z,v),\varphi^*(w,v)\big) \le \varrho_\UD\big(z,w\big) \,
\end{equation}
for all $z, w \in \UD\setminus\{v\}$. This is the three-point Schwarz\,--\,Pick lemma of Beardon and Minda\,\footnote{They use the hyperbolic distance instead of the pseudo-hyperbolic one.} \cite[Theorem 3.1]{BM2004}. The following sharpening of the Schwarz\,--\,Pick inequality $${\hypdist_\UD(z,w)}-{\hypdist_\UD(\varphi(z),\varphi(w))\ge 0}$$  is a consequence of \eqref{eq:sp-hypdifquot}.

\begin{proposition}\label{prop:Ineq_ApprUniv}
	Let $\varphi \in \Hol(\UD, \UD)$, $w \in \UD$ and $\delta:=1-\dhyp \varphi(w)$. Then, for every $z \in \UD$,
	\begin{equation*}\label{EQ_U_est}
		e^{-\hypdist_\UD(z,w)} \sinh\big(\hypdist_\UD(z,w)\big) \delta  \le\hypdist_{\UD}(z,w) - \hypdist_{\UD}\big(\varphi(z), \varphi(w)\big) \leq  e^{ \hypdist_\UD(z,w)}  \sinh \big( \hypdist_\UD(z,w)\big) \delta \,.
	\end{equation*}
\end{proposition}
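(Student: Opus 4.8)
The plan is to reduce the assertion to the two-point Schwarz\,--\,Pick lemma applied to the hyperbolic difference quotient $g:=\varphi^*(\cdot,w)$, followed by two elementary one-variable estimates. First one disposes of the degenerate cases: if $\varphi\in\Aut(\UD)$ then $\delta=0$ and $\varphi$ is a hyperbolic isometry, so all three quantities vanish, and if $z=w$ everything is $0$. So assume $\varphi\notin\Aut(\UD)$ and $z\neq w$, and abbreviate $t:=\varrho_\UD(z,w)\in(0,1)$ and $d:=\hypdist_\UD(z,w)=\log\frac{1+t}{1-t}$, so that $e^{\pm d}\sinh(d)\,\delta=\frac{2\delta t}{(1\mp t)^2}$. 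The starting point is the elementary identity $\varrho_\UD(\varphi(z),\varphi(w))=|\varphi^*(z,w)|\,\varrho_\UD(z,w)$, which follows directly from the definition of $\varphi^*$, together with the fact noted after \eqref{eq:sp-hypdifquot} that $\varphi^*(\cdot,w)$ extends holomorphically across the diagonal as a self-map of $\UD$, with $|\varphi^*(w,w)|=\dhyp\varphi(w)=1-\delta$. Writing $s:=|\varphi^*(z,w)|\in[0,1)$, one obtains
$$
\hypdist_\UD(z,w)-\hypdist_\UD\big(\varphi(z),\varphi(w)\big)=\log\frac{(1+t)(1-st)}{(1-t)(1+st)}=:D(s),
$$
and $D$ is strictly decreasing on $(-1/t,1/t)$.

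Next I would extract two-sided bounds on $s$ from \eqref{eq:sp-hypdifquot} with $v=w$, which reads $\varrho_\UD\big(\varphi^*(z,w),\varphi^*(w,w)\big)\le t$. Converting to hyperbolic distance and using the triangle inequality gives $\big|\hypdist_\UD(0,\varphi^*(z,w))-\hypdist_\UD(0,\varphi^*(w,w))\big|\le d$; since $x\mapsto\hypdist_\UD(0,x)$ is increasing in $|x|$, this translates into $s_-\le s\le s_+$ with
$$
s_\pm:=\frac{(1-\delta)\pm t}{1\pm(1-\delta)t}.
$$
Here $s_+\le 1$ because $1+(1-\delta)t-(1-\delta+t)=\delta(1-t)\ge0$, and $s_->-1/t$; when $s_-<0$ the bound $s\ge s_-$ holds trivially anyway. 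By monotonicity of $D$ on $[s_-,s_+]\subset(-1/t,1/t)$ we get $D(s_+)\le D(s)\le D(s_-)$, so it remains to evaluate $D$ at the two endpoints.

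A routine simplification (using $1-s_-t=\frac{(1-t)^2+2\delta t}{1-(1-\delta)t}$, $1+s_-t=\frac{1-t^2}{1-(1-\delta)t}$, and the analogous formulas for $s_+$) collapses the endpoint values to
$$
D(s_-)=\log\!\Big(1+\frac{2\delta t}{(1-t)^2}\Big),\qquad D(s_+)=-\log\!\Big(1-\frac{2\delta t}{(1+t)^2}\Big).
$$
Applying $\log(1+x)\le x$ to the first, and $-\log(1-x)\ge x$ to the second (legitimate since $\frac{2\delta t}{(1+t)^2}\le\frac{2t}{(1+t)^2}<1$), yields precisely $D(s_-)\le\frac{2\delta t}{(1-t)^2}=e^{d}\sinh(d)\,\delta$ and $D(s_+)\ge\frac{2\delta t}{(1+t)^2}=e^{-d}\sinh(d)\,\delta$, which combined with $D(s_+)\le D(s)\le D(s_-)$ is exactly the claim. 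I do not anticipate a genuine obstacle: the only points that need a little care are the identification $|\varphi^*(w,w)|=\dhyp\varphi(w)$ at the diagonal, the bookkeeping for the possibly negative lower endpoint $s_-$ (and the verification that $D$ stays monotone and finite on the relevant interval), and checking that the somewhat opaque expressions $D(s_\pm)$ really do telescope to the clean logarithms above, so that the two sharp elementary inequalities can be invoked in their valid ranges.
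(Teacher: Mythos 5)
Your proof is correct and follows essentially the same path as the paper: both rest on the three-point Schwarz--Pick inequality $\varrho_\UD\big(\varphi^*(z,w),\varphi^h(w)\big)\le\varrho_\UD(z,w)$, extract two-sided bounds $s_-\le|\varphi^*(z,w)|\le s_+$, translate to $\hypdist_\UD(z,w)-\hypdist_\UD(\varphi(z),\varphi(w))$, and finish with $\log(1+x)\le x\le-\log(1-x)$. The only cosmetic difference is in how you obtain the bounds on $s$: you use the reverse triangle inequality for $\hypdist_\UD(0,\cdot)$, whereas the paper uses the equivalent contraction property $\varrho_\UD(|a|,|b|)\le\varrho_\UD(a,b)$; and you organize the algebra around the function $D(s)$ before inserting the endpoints, whereas the paper carries the pseudo-hyperbolic form further before converting. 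Both arrangements lead to exactly the same endpoint computations and the same final elementary inequalities.
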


\begin{proof} Clearly, we can assume that $\varphi \not\in \Aut(\D)$, so $\delta>0$.
   Fix $z \in \D$. 
    In terms of the  hyperbolic derivative of $\varphi$ at $w$,  which is defined as
	\begin{equation*} \label{EQ:HypDerivativeDef}
		\varphi^h(w):=\lim \limits_{v \to w} \varphi^*(v,w)= \left(1-|w|^2 \right) \frac{\varphi'(w)}{1-|\varphi(w)|^2}  \, ,
	\end{equation*}
	the three-point Schwarz\,--\,Pick inequality \eqref{eq:sp-hypdifquot} for $v \to w$ takes the form
	$$ \varrho_\UD\big(\varphi^*(z,w),\varphi^h(w) \big) \le \varrho_\UD \big(z,w\big) \, . $$
	Since $\varrho_\UD(|a|,|b|) \le \varrho_\UD(a,b)$ and $\dhyp \varphi(a)=|\varphi^h(a)|$ for all $a,b \in \UD$, it follows that
	$$ \left| \frac{|\varphi^*(z,w)|-\dhyp \varphi(w)}{1-\dhyp \varphi(w) |\varphi^*(z,w)|} \right| \le \varrho_\UD(z,w) \, .$$
	This results in two inequalities, namely
	$$ \frac{|\varphi^*(z,w)|-\dhyp \varphi(w)}{1-\dhyp \varphi(w) |\varphi^*(z,w)|}  \le \varrho_\UD(z,w) \quad \text{ and } \quad
	\frac{\dhyp \varphi(w)-|\varphi^*(z,w)|}{1-\dhyp \varphi(w) |\varphi^*(z,w)|}  \le \varrho_\UD(z,w)\, .$$
	Solving for $|\varphi^*(z,w)|$ and using $|\varphi^*(z,w)|= \varrho_\UD\big(\varphi(z),\varphi(w) \big)/ \varrho_\UD(z,w)$,  we obtain
	$$ \frac{\dhyp \varphi(w)-\varrho_\UD(z,w)}{1-\dhyp \varphi(w) \varrho_\UD(z,w)} \varrho_\UD(z,w) \le
	\varrho_\UD\big(\varphi(z),\varphi(w) \big)  \le \frac{\varrho_\UD(z,w)+\dhyp \varphi(w)}{1+\varrho_\UD(z,w) \dhyp \varphi(w)}\varrho_\UD(z,w) \, .$$
	In terms of the hyperbolic distance $\hypdist_\UD$ instead of the pseudohyperbolic distance $\varrho_\UD$ this chain of inequalities is equivalent to
	$$ -\log \big(e^{-\hypdist_\UD(z,w)}+\delta \sinh \hypdist_\UD(z,w) \big)\le \hypdist_\UD \big(\varphi(z),\varphi(w) \big) \le \log \big(e^{\hypdist_\UD(z,w)}-\delta \sinh \hypdist_\UD(z,w) \big)\, ,$$
	or
	$$  -\log \big(1-\delta e^{-\hypdist_\UD(z,w)} \sinh \hypdist_\UD(z,w) \big) \le  \hypdist_\UD(z,w)-\hypdist_\UD \big(\varphi(z),\varphi(w) \big) \le \log \big(1+\delta e^{\hypdist_\UD(z,w)} \sinh \hypdist_\UD(z,w)\big)  \, .$$
	Taking into account that $\log(1+x) \le x \le -\log(1-x)$ for all $x \in [0,1)$,
	the proof is complete.
\end{proof}

If $\varphi\in\Hol(\UD,\UD)$ is not a conformal automorphism, then Beardon and Minda \cite[Corollary 3.7]{BM2004}  showed that \eqref{eq:sp-hypdifquot} leads to a Schwarz\,--\,Pick lemma for
the hyperbolic distortion $\dhyp \varphi$  of $\varphi$; namely
\begin{equation}\label{eq:Golusin_Estimates}
	\hypdist_{\UD}\big(\dhyp \varphi(z), \dhyp \varphi(w)\big) \leq 2 \hypdist_{\UD}(z,w) \, , \quad z ,w \in \UD.
\end{equation}
For earlier versions of this inequality, we refer to \cite[Theorem 3]{Golusin1945}, \cite[p.~335]{Go} and \cite{Yamashita1994,Yamashita1997}.
For our purpose the following immediate consequence of \eqref{eq:Golusin_Estimates} will be important
\begin{equation} \label{eq:Golusin2}
	e^{-2 \hypdist_\UD(z,w)} \le \frac{1-\dhyp \varphi(z)}{1-\dhyp \varphi(w)} \le e^{2 \hypdist_\D(z,w)} \, \quad z,w \in \UD \, .
\end{equation}
These inequalities are derived by writing \eqref{eq:Golusin_Estimates} in terms of the pseudo-hyperbolic distance and using the triangle inequality for $\varrho_{\UD}(\dhyp \varphi(z), \dhyp \varphi(w))$.

Let us point out that \eqref{eq:Golusin2} immediately yields a Lindel\"of--type angular limit theorem for the hyperbolic distortion $\dhyp \varphi$ at a point $\sigma \in \partial \D$:
\begin{equation} \label{EQ:RadialToSectorial}
	\lim_{r \to 1-} \dhyp \varphi(r\sigma )=1 \quad \text{ implies } \quad \anglim_{z \to \sigma} \dhyp \varphi(z)=1\, .
\end{equation}
This follows at once from the basic fact (see \cite[Section 2.2]{Marco}) that every sequence $(z_n)$ in $\UD$ converging non-tangentially to $\sigma$ has bounded hyperbolic distance to the segment $[0,\sigma)=\{r \sigma \, : \, 0 \le r <1\}$.
In fact, a statement much stronger than \eqref{EQ:RadialToSectorial} holds: it suffices to assume $\dhyp \varphi(z_n) \to 1$ for~a single sequence $(z_n)$ of bounded step converging non-tangentially to $\sigma$.

\begin{proposition}[Lindel\"of's theorem for hyperbolic distortion] \label{Prop:HypDer}
Let $\varphi \in \Hol(\UD,\UD)$ and $\sigma \in \partial \D$. Suppose that $(z_n)$ is a sequence in $\D$ of bounded step converging non-tangentially to $\sigma$ and such that
$\dhyp \varphi(z_n) \to 1$. Then $\dhyp \varphi(z) \to 1$ as $z \to \sigma$ non-tangentially.
\end{proposition}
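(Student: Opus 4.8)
The plan is to upgrade the Lindel\"of-type implication \eqref{EQ:RadialToSectorial}, which only used radial convergence, to the stronger statement allowing an arbitrary sequence $(z_n)$ of bounded hyperbolic step converging non-tangentially to $\sigma$. The key tool is inequality \eqref{eq:Golusin2}, which says that $1-\dhyp\varphi$ is, up to the multiplicative factor $e^{\pm 2\hypdist_\UD(z,w)}$, ``quasi-constant'' on hyperbolic balls of bounded radius. Concretely, I would first fix a Stolz region $S$ with vertex at $\sigma$ and an arbitrary sequence $(w_k)\subseteq S$ with $w_k\to\sigma$; the goal is to show $\dhyp\varphi(w_k)\to 1$.

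First I would invoke the standard fact (as cited after \eqref{EQ:RadialToSectorial}, see \cite[Section~2.2]{Marco}) that points in a Stolz region at $\sigma$ stay within bounded hyperbolic distance of the radial segment $[0,\sigma)$; hence there is a constant $D=D(S)$ and points $r_k\sigma\in[0,\sigma)$ with $\hypdist_\UD(w_k,r_k\sigma)\le D$ and $r_k\to 1^-$. Next, using that $(z_n)$ has bounded step $M$ and converges non-tangentially to $\sigma$, I would argue that the set $\{z_n\}$ is ``hyperbolically $M'$-dense along $[0,\sigma)$ near $\sigma$'' in the following sense: there is $M'>0$ such that for every $r$ sufficiently close to $1$ there exists $n$ with $\hypdist_\UD(r\sigma,z_n)\le M'$. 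This is because consecutive $z_n$'s are within hyperbolic distance $M$ of each other, they all lie in some Stolz region (bounded hyperbolic distance $D'$ from $[0,\sigma)$), and their projections to $[0,\sigma)$ must eventually sweep out the whole tail of the segment — otherwise the $z_n$ would have to ``jump'' a hyperbolic gap larger than $M+2D'$, contradicting bounded step. Combining, for each $k$ large there is $n=n(k)$ with
$$
\hypdist_\UD(w_k,z_{n(k)})\le \hypdist_\UD(w_k,r_k\sigma)+\hypdist_\UD(r_k\sigma,z_{n(k)})\le D+M'=:L,
$$
a bound independent of $k$, and moreover $n(k)\to\infty$.

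Finally I would plug this into \eqref{eq:Golusin2} with $z=w_k$, $w=z_{n(k)}$:
$$
0\le 1-\dhyp\varphi(w_k)\le e^{2L}\,\big(1-\dhyp\varphi(z_{n(k)})\big)\xrightarrow[k\to\infty]{}0,
$$
since $\dhyp\varphi(z_n)\to 1$ by hypothesis and $n(k)\to\infty$. As $S$ and $(w_k)$ were arbitrary, this gives $\anglim_{z\to\sigma}\dhyp\varphi(z)=1$. (One may assume $\varphi\notin\Aut(\UD)$, as otherwise $\dhyp\varphi\equiv1$ and there is nothing to prove; the application of \eqref{eq:Golusin2} requires this so that $1-\dhyp\varphi$ is genuinely defined and the Beardon--Minda estimate \eqref{eq:Golusin_Estimates} applies.)

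The main obstacle is the second step: making precise and proving the ``sweeping'' claim that the bounded-step sequence $(z_n)$, converging non-tangentially, cannot leave arbitrarily large hyperbolic gaps along the tail of $[0,\sigma)$. The cleanest way is to project each $z_n$ hyperbolically onto the geodesic $[0,\sigma)$, note that consecutive projections move by at most $\hypdist_\UD(z_n,z_{n+1})+$ (a bounded correction coming from the non-tangential bound) $\le M + 2D'$, and that the projections tend to $\sigma$ along the geodesic; a connectedness/intermediate-value argument on the real parameter then yields the required density constant $M'=M+2D'$. Everything else is a routine application of the already-established estimate \eqref{eq:Golusin2} and the triangle inequality.
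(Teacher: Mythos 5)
Your proposal is correct and takes essentially the same approach as the paper's proof: both compare points to nearby radial projections, use bounded step and connectedness to show the projections are hyperbolically dense along the tail of the radius, and transfer via the Golusin-type inequality \eqref{eq:Golusin2}. The paper merely streamlines this by passing to the half-plane, where the projection is simply $\zeta\mapsto|\zeta|$ (a hyperbolic contraction) and the covering of the tail $[x_1,\infty)$ by the consecutive segments $I_n$ is immediate from connectedness.
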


The corresponding result for \textit{holomorphic} functions $\varphi : \UD\to \UD$ instead of the \textit{non-holomorphic} function $\dhyp \varphi : \D \to [0,1]$ is a special case of \cite[Theorem 5.6]{AB2016}. The most suitable setting for the proof of Proposition \ref{Prop:HypDer} is working on the right half-plane instead of the unit disk. Hence, we give the proof in Section~\ref{SUB:half-plane}, after a short introduction to hyperbolic distortion in the right half-plane model of hyperbolic geometry.

In the remainder of this subsection, we observe a necessary condition for the existence of the non-tangential limit for the hyperbolic distortion at a boundary point as well as an equivalence between continuous and discrete convergence for the hyperbolic distortion.

\begin{proposition} \label{prop:4}
Suppose $\varphi \in \Hol(\UD,\UD)$, $\sigma \in \partial \D$, and
$$ \int \limits_{0}^1 \big( 1-\dhyp \varphi(r \sigma) \big) \lambda_{\UD}(r \sigma) \, \di r <+\infty \, .$$
Then
$$ \angle \lim \limits_{z \to \sigma} \dhyp \varphi(z)=1 \, .$$
\end{proposition}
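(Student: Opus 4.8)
The plan is to deduce first that the integrability hypothesis forces the \emph{radial} convergence $\dhyp\varphi(r\sigma)\to1$ as $r\to1^-$, and then to invoke the Lindel\"of-type implication \eqref{EQ:RadialToSectorial} (already established above) to upgrade this to the non-tangential limit. We may assume $\varphi\notin\Aut(\UD)$, since otherwise $\dhyp\varphi\equiv1$ and there is nothing to prove. The crucial point is that the nonnegative function $r\mapsto 1-\dhyp\varphi(r\sigma)$ (nonnegativity is~\eqref{EQ:SchwarzPickOnePoint}) cannot dip and recover arbitrarily fast near~$\sigma$: applying the Golusin-type estimate~\eqref{eq:Golusin2} along the geodesic ray $[0,\sigma)$ gives
$$ 1-\dhyp\varphi(z)\;\ge\; e^{-2\hypdist_\UD(z,w)}\bigl(1-\dhyp\varphi(w)\bigr)\qquad\text{for all }z,w\in[0,\sigma),$$
so if $1-\dhyp\varphi$ is bounded away from~$0$ at a point of the ray, it remains comparably large on the whole hyperbolic unit ball centered there.

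Concretely, I would argue by contradiction. If $\dhyp\varphi(r\sigma)\not\to1$, choose $\varepsilon>0$ and $r_n\to1^-$ with $1-\dhyp\varphi(r_n\sigma)\ge\varepsilon$. Recall that the hyperbolic length element along $[0,\sigma)$ is exactly $\lambda_\UD(r\sigma)\,\di r$, and that $\hypdist_\UD(0,r\sigma)=\log\frac{1+r}{1-r}$; hence, for all $n$ large enough, the set $I_n:=\{r\in(0,1)\colon \hypdist_\UD(r\sigma,r_n\sigma)\le 1\}$ is a subinterval of $(0,1)$ of hyperbolic length exactly~$2$, i.e.\ $\int_{I_n}\lambda_\UD(r\sigma)\,\di r=2$. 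By the displayed inequality, $1-\dhyp\varphi(r\sigma)\ge e^{-2}\varepsilon$ for every $r\in I_n$, whence $\int_{I_n}\bigl(1-\dhyp\varphi(r\sigma)\bigr)\lambda_\UD(r\sigma)\,\di r\ge 2e^{-2}\varepsilon$. Passing to a subsequence along which $\hypdist_\UD(0,r_{n_{k+1}}\sigma)>\hypdist_\UD(0,r_{n_k}\sigma)+2$ makes the intervals $I_{n_k}$ pairwise disjoint, and summing these lower bounds contradicts the finiteness of $\int_0^1\bigl(1-\dhyp\varphi(r\sigma)\bigr)\lambda_\UD(r\sigma)\,\di r$. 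Therefore $\dhyp\varphi(r\sigma)\to1$ as $r\to1^-$, and \eqref{EQ:RadialToSectorial} yields $\anglim_{z\to\sigma}\dhyp\varphi(z)=1$.

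There is no real obstacle here; the only care needed is the elementary bookkeeping making the $I_{n_k}$ disjoint and the identification of $\lambda_\UD(r\sigma)\,\di r$ with hyperbolic arc length along the ray. Equivalently, and perhaps more transparently, one can substitute $t=\hypdist_\UD(0,r\sigma)=\log\frac{1+r}{1-r}$ to rewrite the hypothesis as $\int_0^{\infty}g(t)\,\di t<\infty$ for $g(t):=1-\dhyp\varphi\bigl(\tanh(t/2)\,\sigma\bigr)\ge0$, where \eqref{eq:Golusin2} reads $g(t)\ge e^{-2|t-s|}g(s)$; an integrable nonnegative function satisfying such a Harnack-type inequality must tend to~$0$ at infinity, giving the radial limit directly.
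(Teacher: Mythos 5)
Your proof is correct and rests on exactly the same two ingredients as the paper's: the Golusin--Harnack-type estimate~\eqref{eq:Golusin2} applied along the radius $[0,\sigma)$ to establish the radial limit $\dhyp\varphi(r\sigma)\to1$, followed by the Lindel\"of upgrade~\eqref{EQ:RadialToSectorial} to get the non-tangential limit. The only stylistic difference is that you argue the radial step by contradiction (extracting disjoint hyperbolic intervals of length~$2$ around witnesses of failure), whereas the paper obtains the sharper \emph{quantitative} inequality $1-\dhyp\varphi(R)\le 2\int_R^1\bigl(1-\dhyp\varphi(r\sigma)\bigr)\lambda_\UD(r\sigma)\,\di r$ directly by integrating the lower bound from~\eqref{eq:Golusin2} and evaluating the resulting integral via the substitution $x=\phi_R(r)$; both are sound, and your closing reformulation as an integrable nonnegative function on $[0,\infty)$ satisfying $g(t)\ge e^{-2|t-s|}g(s)$ is precisely the real-variable content of that computation.
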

\begin{proof}
It suffices to consider the case $\sigma=1$. Fix $R \in (0,1)$.  Consider the automorphism of~$\UD$ given by $\phi_R(z):=\frac{z-R}{1-R z}$, ${z \in \UD}$. For $r\in[R,1)$ set $x:=\phi_R(r)$.  Using~\eqref{eq:Golusin2} and taking into account that ${\di x/\di r=\phi_R'(r)>0}$ for all ${r\in[R,1})$, we obtain
\begin{eqnarray*}
	\int \limits_{R}^1 \big( 1-\dhyp \varphi(r) \big) \lambda_{\UD}(r) \, \di r & \ge &
	\big( 1-\dhyp \varphi(R)\big)  \int \limits_{R}^{1} e^{-2 \hypdist_{\UD} (R,r)} \lambda_{\UD}(r) \, \di r\\
	&=&  \big(1-\dhyp \varphi(R)\big)  \int \limits_{R}^{1} e^{-2 \hypdist_{\UD} (0,\phi_R(r))} \lambda_{\D}(\phi_R(r)) |\phi'_R(r)| \, \di r \\
	&=& \big(1-\dhyp \varphi(R)\big)  \int \limits_{0}^{1} e^{-2 \hypdist_{\UD} (0,x)} \lambda_{\D}(x)  \, \di x\\
	&=&\big(1-\dhyp \varphi(R)\big)   \int \limits_{0}^{1} \frac{2(1-x)}{(1+x)^3}  \, \di x =\frac{1-\dhyp \varphi(R)}{2} \, .
\end{eqnarray*}
This proves that $\dhyp \varphi(z) \to 1$ as $z \to 1$ radially. Proposition~\ref{Prop:HypDer} then yields $\dhyp \varphi(z) \to 1$ as $z \to 1$ non-tangentially.
\end{proof}

\begin{proposition}\label{prop:5}
Let $\varphi\in\Hol(\UD,\UD)$ and let $\psi \in \Aut(\D)$ be given by $\psi(z):={(z+b)/(1+bz)}$ for all ${z\in\UD}$, with $b\in(0,1)$.  Let
$$ I:= \int \limits_{0}^1 \left(1-\dhyp \varphi(r) \right) \lambda_{\D}(r) \, dr \, ,\qquad S:= \sum \limits_{n=0}^{\infty} \big(1-\dhyp \varphi(\psi^{\circ n}(0) \big) \, .$$
Then
$$         \frac{2b}{\left(1+b\right)^2} \le \frac{\,I}{\,S\,}  \le  \frac{2b}{\left(1-b\right)^2}  \, .$$
\end{proposition}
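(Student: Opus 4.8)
The plan is to split the integral $I$ into pieces indexed by the orbit $(\psi^{\circ n}(0))_{n\ge0}$ and to match each piece with the corresponding term of $S$ by means of the Golusin-type two-sided estimate \eqref{eq:Golusin2}.

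First I would dispose of the trivial case $\varphi\in\Aut(\UD)$, in which $\dhyp\varphi\equiv1$ and $I=S=0$. So assume $\varphi\notin\Aut(\UD)$ and set $a_n:=\psi^{\circ n}(0)$. Since $\psi$ is an automorphism of $\UD$ preserving the real diameter $(-1,1)$ and increasing there, the sequence $(a_n)$ is strictly increasing in $(0,1)$ with $a_n\to1$, and $\psi^{\circ n}$ maps the (real, hyperbolic-geodesic) segment $[0,b]$ onto $[a_n,a_{n+1}]$ with $a_n$ corresponding to $0$. As the integrand $r\mapsto(1-\dhyp\varphi(r))\lambda_\UD(r)$ is nonnegative, one has $I=\sum_{n\ge0}I_n$ with $I_n:=\int_{a_n}^{a_{n+1}}(1-\dhyp\varphi(r))\lambda_\UD(r)\,\di r$.

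The heart of the argument: on the segment $[a_n,a_{n+1}]$ apply \eqref{eq:Golusin2} with base point $w=a_n$, i.e. $e^{-2\hypdist_\UD(r,a_n)}(1-\dhyp\varphi(a_n))\le1-\dhyp\varphi(r)\le e^{2\hypdist_\UD(r,a_n)}(1-\dhyp\varphi(a_n))$; then multiply by $\lambda_\UD(r)$ and integrate over $[a_n,a_{n+1}]$, using the conformal invariance of $\hypdist_\UD$ and of the hyperbolic length element under $\psi^{\circ n}$ to reduce $\int_{a_n}^{a_{n+1}}e^{\pm2\hypdist_\UD(r,a_n)}\lambda_\UD(r)\,\di r$ to the single reference integral $c^{\pm}:=\int_0^b e^{\pm2\hypdist_\UD(t,0)}\lambda_\UD(t)\,\di t$, which is independent of $n$. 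Since $\varrho_\UD(t,0)=t$ gives $\hypdist_\UD(t,0)=\log\frac{1+t}{1-t}$, one has $e^{\pm2\hypdist_\UD(t,0)}\lambda_\UD(t)=\frac{2(1\pm t)}{(1\mp t)^3}$, and an elementary integration yields $c^{+}=\frac{2b}{(1-b)^2}$ and $c^{-}=\frac{2b}{(1+b)^2}$. Thus $c^{-}(1-\dhyp\varphi(a_n))\le I_n\le c^{+}(1-\dhyp\varphi(a_n))$; summing over $n\ge0$ and dividing by $S>0$ gives the claim.

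There is no genuine obstacle here: once \eqref{eq:Golusin2} is available, the whole proof is routine. The single point I would emphasize is that the reference integrals $c^{\pm}$ evaluate \emph{exactly} to the constants appearing in the statement — this is the $b\to1$ refinement of the computation $\int_0^1\frac{2(1-x)}{(1+x)^3}\,\di x=\tfrac12$ already carried out in the proof of Proposition~\ref{prop:4} — and it is precisely this exactness that makes the two-sided bound sharp rather than merely of the correct order of magnitude.
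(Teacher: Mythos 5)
Your proposal is correct and follows exactly the route the paper sketches in its one-line proof: split the integral along the orbit $a_n=\psi^{\circ n}(0)$, apply the Golusin two-sided estimate \eqref{eq:Golusin2} with base point $a_n$ on each piece, change variables by the automorphism $\psi^{\circ n}$ to reduce to the reference integrals $c^{\pm}$, and sum. You have simply written out the details the paper leaves to the reader, including the exact evaluation of $c^{\pm}=\frac{2b}{(1\mp b)^2}$, which is the $b<1$ analogue of the computation already displayed in the proof of Proposition~\ref{prop:4}.
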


\begin{proof}
In the same manner as in the proof of Proposition~\ref{prop:4}, we integrate from $\psi^{\circ n}(0)$ to $\psi^{\circ (n+1)}(0)$ and then summation leads to the desired inequalities.
\end{proof}

\addtocontents{toc}{\SkipTocEntry}\subsection{Passing to the right half-plane and proof of Proposition \ref{Prop:HypDer}.}\label{SUB:half-plane}\mbox{~}
By the Riemann mapping theorem and conformal invariance of the hyperbolic distortion, the results of Subsection \ref{SUB:hypdistortion} hold -- mutatis mutandis -- for holomorphic self-maps of any simply connected hyperbolic domain instead~of~$\D$.

In fact, most of the proofs in this paper are carried out in the half-plane model of hyperbolic geometry. For this reason, we make repeated use of the passage between the unit disk~$\UD$ and the right half-plane $\UH:=\{\zeta\in \C \, : \, \Re \zeta>0\}$ as described below.

Conjugation by the M\"obius map $T:\UH\to\UD$ given by $z={T(\zeta):=(\zeta-1)/(\zeta+1)}$, ${\zeta\in\UH}$,  transforms holomorphic self-maps~$\varphi$ of the unit disk~$\UD$ to holomorphic self-maps~$F$ of the right half-plane~$\UH$:
$$
F:\UH\to\UH,\quad F:=T^{-1}\circ \varphi\circ T.
$$
This correspondence is clearly one-to-one, which allows us to extend the notion of a conformality at a boundary point to self-maps of the half-plane~$\UH$. Precomposing a self-map ${F\in\Hol(\UH,\UH)}$ with an automorphism of~$\UH$, we can move a given boundary point to~$\infty$.
\begin{definition}
A holomorphic self-map $F\in\Hol(\UH,\UH)$ is said to be \emph{conformal at $\infty$ in the weak} (resp. \emph{strong}) \emph{sense}, if the self-map $\varphi:= {T\circ F\circ T^{-1}}$ of~$\UD$, where $T$ is defined as above, is conformal at~${\sigma:=1}$ in the weak (resp. strong) sense.
\end{definition}
In fact, it is not difficult to characterize conformality of~$F\in\Hol(\UH,\UH)$ at~$\infty$ directly, i.e. without passing to~$\UD$. First of all, both types of conformality imply that the angular limit
$\anglim_{\zeta\to\infty}F(\zeta)=:F(\infty)$  exists, finite or infinite, and belongs to~$\partial\UH$.
In such a case, we may in fact suppose that $\infty$~is a boundary fixed point of~$F$, i.e. ${\anglim_{\zeta\to\infty}F(\zeta)=\infty}$; otherwise we replace $F$ by ${1/(F-F(\infty))}$.
A self-map $F\in\Hol(\UH,\UH)$ satisfying the latter condition is conformal at~$\infty$ in the \textit{weak} sense if and only if the angular limit $\anglim_{\zeta\to\infty}\arg\big(F(\zeta)/\zeta\big)$ exists and equals~$0$. Note that the words ``and equals~$0$'' can be actually omitted (see Remark~\ref{RM_without-limit-values}).

Moreover, $F\in\Hol(\UH,\UH)$ is conformal at~$\infty$ in the \textit{strong} sense and has, at the same time, a boundary fixed point at~$\infty$ if and only if $F$ has the non-zero angular derivative at~$\infty$ in the sense of Carath\'eodory, i.e.
\begin{equation}\label{EQ_Carath-derivative}
F'(\infty):=\anglim_{\zeta\to\infty}\frac{F(\zeta)}{\zeta}\,>\,0.
\end{equation}
It is worth mentioning that the angular limit in~\eqref{EQ_Carath-derivative} exists for any holomorphic self-map of~$\UH$; it is always finite, and to be more precise, it is a non-negative real number, see e.g. \cite[\S26]{Valiron:book}.
Note also that for the corresponding self-map $\varphi\in\Hol(\UD,\UD)$, we have
$$
\anglim_{z\to1}\frac{\varphi(z)-1}{z-1}\,=\,\frac{1}{F'(\infty)},
$$
with the expression in the l.h.s. being exactly the angular derivative $\varphi'(1)$ provided that $\varphi$ has a boundary fixed point at~$1$.

Let us now recall the basics of  the hyperbolic geometry in the half-plane setting. The hyperbolic metric  $\lambda_\UH(\zeta) \, |\!\di\zeta|$ in $\UH$ is defined by
$$
\lambda_\UH(\zeta):=\lambda_{\UD}\big(T(\zeta)\big)\big|T'(\zeta)\big|=\frac{1}{\Re \zeta},\qquad \zeta\in\UH.
$$
The induced hyperbolic distance denoted by $\hypdist_\UH$ satisfies
$$
\hypdist_\UH(\omega,\zeta)=\hypdist_\UD\big(T(\omega),T(\zeta)\big)
$$
for any $\zeta,\omega\in\UH$.
Similarly to the unit disk setting, for ${F \in \Hol(\UH,\UH)}$, the \emph{hyperbolic distortion} $\dhyp F(\zeta)$ at~a point ${\zeta \in \UH}$ is defined by
$$ \dhyp F(\zeta):=\lim \limits_{\omega \to \zeta} \frac{\hypdist_\UH\big(F(\omega),F(\zeta)\big)}{\hypdist_\UH(\omega,\zeta)} =\frac{\lambda_\UH(F(\zeta))}{\lambda_\UH(\zeta)}\,|F'(\zeta)|=\frac{\Re \zeta}{\Re F(\zeta)} \, |F'(\zeta)|\,.
$$
It is not difficult to see that
\begin{equation*}\label{EQ_conf-inv-of-D_h}
\dhyp F(\zeta)=\dhyp \varphi(z)\quad\text{~if $~\varphi=T\circ F\circ T^{-1}$, $F\in\Hol(\UH,\UH)$, and $z=T(\zeta)$ with $\zeta\in\UH$.}
\end{equation*}

Let us now turn again to Proposition \ref{Prop:HypDer}. Recall that a sequence ${(\zeta_n)\subseteq\UH}$ is said to converge to~$\infty$ \emph{non-tangentially} if ${\lim_{n\to+\infty}\zeta_n=\infty}$ and there exists ${\theta\in(0,\pi/2)}$ such that
$$
(\zeta_n)\subseteq A_\theta:=\{\zeta \in \C \, : \, |\arg \zeta|<\theta\}.
$$

\begin{proof}[\proofof{Proposition \ref{Prop:HypDer}.}]
Passing to the right half-plane~$\UH$, as described above, consider a self-map ${F \in \Hol(\UH,\UH)}$ such that $\dhyp F(\zeta_n) \to 1$ for some sequence $(\zeta_n)$ in~$\UH$ converging non-tangentially to~$\infty$ and satisfying $\hypdist_{\UH}(\zeta_n,\zeta_{n+1}) \le M$ for all ${n\in\N}$ and some fixed ${M>0}$. Denote ${x_n:=|\zeta_n|}$. Since $(\zeta_n)$ belongs to some fixed sector $A_{\theta}$, ${\theta\in(0,\pi/2)}$, there is a constant ${L>0}$ such that ${\hypdist_{\UH}\big(\zeta_n,x_n\big) \le L}$ for all ${n \in \N}$, see \cite[Lemma 5.4.1 (6)]{BCD-Book}. It follows from~\eqref{eq:Golusin2} that
$$1-\dhyp F(x_n)  \le \big(1-\dhyp F(\zeta_n) \big) e^{2 \hypdist_{\UH}(x_n,\zeta_n)} \to 0 $$
as ${n \to +\infty}$. Moreover, we have
$\hypdist_{\UH}(x_n,x_{n+1}) \le \hypdist_{\UH}({\zeta_n,\zeta_{n+1}})\le M$, see e.g. \cite[Lemma~5.4.1~(5)]{BCD-Book}.
For ${n\in\Natural}$, denote by ${I_n\subseteq\Real}$ the segment with the end-points $x_{n}$ and~$x_{n+1}$. It is easy to see that the union $\cup_{n\in\N}\,I_n$ covers ${[x_1,+\infty)}$. Hence, for any sequence ${(t_k)\subseteq[x_1,+\infty)}$ there exists a sequence ${(n_k)\subseteq\N}$ such that ${t_k\in I_{n_k}}$ for each ${k\in\N}$. Clearly, if ${t_k\to+\infty}$ as ${k\to+\infty}$, then also ${n_k\to+\infty}$ as ${k\to+\infty}$. Note that $I_{n_k}$ is the hyperbolic geodesic segment joining $x_{n_k}$ with~$x_{n_k+1}$. Therefore,
$$\hypdist_\UH\big(t_k,x_{n_k}\big) \le \hypdist_\UH\big(x_{n_k},x_{n_k+1}\big) \le M \, $$
for all $k \in \N$. Consequently, in view of \eqref{eq:Golusin2},
$$ 1-\dhyp F(t_k) \le \big(1-\dhyp F(x_{n_k}) \big)\, \exp\!\big(2 \hypdist_{\UH}(t_k,x_{n_k})\big) \le \big(1-\dhyp F(x_{n_k}) \big) e^{2 M}~ \rightarrow 0~ \quad \text{as~$~k\to+\infty$}.$$
This proves that $\dhyp F(t) \to 1$ as $(0,+\infty)\ni  t \to +\infty$. Passing back to $\D$, we get that ${\dhyp \varphi(r\sigma) \to 1}$ as ${r \to 1^-}$, and then~\eqref{EQ:RadialToSectorial}
shows  that ${\dhyp \varphi(z) \to 1}$ as ${z \to \sigma}$ non-tangentially.
\end{proof}

\section{Proof of Theorem~\ref{TH-main1}} \label{SEC:ProofTH1}

The proof of Theorem~\ref{TH-main1} is long and will therefore be divided into several steps. We begin with an auxiliary result about holomorphic self-maps of $\UH$ which have a boundary fixed point at $\infty$ and are conformal at $\infty$ in the weak sense. This result will also be needed for the proof of Theorem \ref{TH-main2}.

\begin{proposition}\label{PR_Real_axis_in_image}
Suppose that $F\in\Hol(\UH,\UH)$ has a boundary fixed point at~$\infty$, i.e. $$\anglim_{\zeta\to\infty}F(\zeta)=\infty.$$If $F$ is conformal at~$\infty$ in the weak sense, then there exists a smooth injective curve ${\gamma:[0, +\infty) \to \UH}$ satisfying the following conditions:
\begin{romlist}
	\item\label{IT_gamma_arc-len-param}  $\gamma$ is parameterized by Euclidean arc length, i.e. ${|\gamma{\,}'(s)|=1}$ for all ${s\ge0}$;
	\item\label{IT_gamma_trace} the trace of $F\circ\gamma$ coincides with the half-line ${[R,+\infty)}$ for some~${R>0}$;
		\item\label{IT_gamma_Re} $\Re(\gamma(s))/s\to1$ as $~{s\to+\infty}$;
			\item\label{IT_gamma_Im} $\arg(\gamma(s))\to 0~$ as $~{s\to +\infty}$;
			\item\label{IT_gamma-bdd-hyperbolic-distance} there is a constant $M>0$ such that $\hypdist_{\UH}\big(\gamma(s),s\big) \le M$ for all ${s \ge 1}$.
		\end{romlist}
\begin{proof}
			This is essentially a corollary of Proposition~\ref{PR_known1} proved in the Appendix.
			Indeed, by Proposition~\ref{PR_known1}, there exists a domain $U\subseteq\UH$ on which $F$ is univalent and such that $W:=F(U)$ is isogonal at~$\infty$, see Definition~\ref{DF_domain-isogonality}. This allows us to define, choosing a suitably large constant ${R>0}$, an injective smooth curve  $\gamma_0:[0,+\infty)\to\UH$ by setting ${\gamma_0(x):=G(x+R)}$ for all~${x\ge0}$, where
			${G:W\to U}$ is the inverse of~$F|_U$. Note that the length of~$\gamma_0$ is infinite because by Proposition~\ref{PR_known1}\,\hbox{\ref{IT_known1-F_inverse-isogonal}}, ${G(x)\to\infty}$ as ${(0,+\infty)\ni x\to+\infty}$. Now denote by~$\gamma$ the re-parametrization of~$\gamma_0$ by the euclidean arc-length parameter, i.e.
			$$
			\gamma:[0,+\infty)\to\UH,\quad \gamma(s):=\gamma_0\big(\ell^{-1}(s)\big),~s\ge0,\quad \text{where~}~
			\ell(x):=\int\nolimits_0^x |G'(y)|\di y,~\text{$~x\ge0$}.
			$$
			Properties~\ref{IT_gamma_arc-len-param} and~\ref{IT_gamma_trace} of~$\gamma$ hold by construction.
			Property~\ref{IT_gamma_Im} holds because $\arg G(x)\to0$ as ${(0,+\infty)\ni x\to+\infty}$ by Proposition~\ref{PR_known1}\,\ref{IT_known1-F_inverse-isogonal}. Furthermore, in view of the same assertion in Proposition~\ref{PR_known1}, we have
			$$
			\Re\gamma'(s)=\frac{\Re G'\big(\ell^{-1}(s)\big)}{|G'\big(\ell^{-1}(s)\big)|}\,\to\,1\quad\text{as~$~s\to+\infty$},
			$$
			which implies property~\ref{IT_gamma_Re}. Finally, the last property~\ref{IT_gamma-bdd-hyperbolic-distance} follows from the previous two.
		\end{proof}
	\end{proposition}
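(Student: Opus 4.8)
The plan is to reduce the proposition to a structural description of weak conformality which I would establish separately in the appendix, namely Proposition~\ref{PR_known1}: if $F\in\Hol(\UH,\UH)$ fixes $\infty$ and is conformal there in the weak sense, then there is a domain $U\subseteq\UH$ on which $F$ is univalent, with $W:=F(U)$ isogonal at~$\infty$ in the sense of Definition~\ref{DF_domain-isogonality} and containing $[R_0,+\infty)$ for some $R_0>0$, and whose inverse branch $G:=(F|_U)^{-1}\colon W\to\UH$ satisfies $G(x)\to\infty$, $\arg G(x)\to0$ and $\Re G'(x)/|G'(x)|\to1$ as $(0,+\infty)\ni x\to+\infty$. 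Granting this, the required curve $\gamma$ is obtained by pulling back a horizontal ray through $G$ and reparametrizing by Euclidean arc length.

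Concretely, I would first fix $R\ge R_0$ with $[R,+\infty)\subseteq W$ and put $\gamma_0(x):=G(x+R)$ for $x\ge0$. Since $G$ is univalent with $G'\neq0$, the curve $\gamma_0$ is smooth, injective and regular, and $F\circ\gamma_0(x)=x+R$, which already gives property~\ref{IT_gamma_trace}. Because $G(x)\to\infty$, the Euclidean length $\ell(x):=\int_0^x|G'(y+R)|\,\di y$ is strictly increasing with $\ell(x)\to+\infty$; hence $\gamma(s):=\gamma_0(\ell^{-1}(s))$ is defined for all $s\ge0$, is injective, and satisfies $|\gamma'(s)|=1$, i.e.\ property~\ref{IT_gamma_arc-len-param}, and it has the same trace under $F$ as $\gamma_0$.

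Setting $u(s):=\ell^{-1}(s)+R\to+\infty$, property~\ref{IT_gamma_Im} is immediate since $\arg\gamma(s)=\arg G(u(s))\to0$. For property~\ref{IT_gamma_Re} I would use $\gamma'(s)=G'(u(s))/|G'(u(s))|$, so that $\Re\gamma'(s)=\Re G'(u(s))/|G'(u(s))|\to1$; integrating and dividing by $s$, an elementary averaging argument then gives $\Re\gamma(s)/s\to1$. Finally, properties~\ref{IT_gamma_Re} and~\ref{IT_gamma_Im} place $\gamma(s)$, for $s$ large, in a fixed sector $\{\zeta:|\arg\zeta|<\theta\}$ with $\theta\in(0,\pi/2)$ and make $|\gamma(s)|$ comparable to $s$; the standard half-plane estimates (as in \cite[Lemma~5.4.1]{BCD-Book}) then bound $\hypdist_\UH(\gamma(s),|\gamma(s)|)$ and $\hypdist_\UH(|\gamma(s)|,s)$ uniformly, and after enlarging the constant to absorb the compact range of small $s$ we obtain property~\ref{IT_gamma-bdd-hyperbolic-distance}.

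The step I expect to be the real obstacle lies outside this excerpt: it is Proposition~\ref{PR_known1} itself --- passing from weak conformality of $F$ at $\infty$ to a genuine univalent ``channel'' $U$ reaching $\infty$ whose image is isogonal there, with the inverse satisfying $\arg G(x)\to0$ and $\Re G'(x)/|G'(x)|\to1$. That is where the hypothesis is genuinely used (via horocyclic regions, a Lindel\"of-type argument, and the geometry of isogonal domains at $\infty$); once it is in place, everything above is routine reparametrization and elementary half-plane geometry.
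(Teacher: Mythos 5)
Your proposal is correct and follows essentially the same route as the paper: invoke Proposition~\ref{PR_known1} to obtain the univalent branch $G$ with $\arg G(x)\to0$ and $\Re G'(x)/|G'(x)|\to1$, pull back the horizontal ray $[R,+\infty)$ via $G$, reparametrize by Euclidean arc length, and read off properties \ref{IT_gamma_arc-len-param}--\ref{IT_gamma-bdd-hyperbolic-distance} in the same order. You also correctly locate the genuine content in Proposition~\ref{PR_known1}; the only (cosmetic) difference is that you write the arc-length integrand as $|G'(y+R)|$, which is slightly more careful than the paper's $|G'(y)|$.
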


	We now turn to the proof that all the conditions \ref{IT_TH-main1:hder-to-1}, \ref{IT_Th-main1:hdiffquo-to-1} and \ref{IT_TH-main1:semireg-cntct-pnt} listed in Theorem~\ref{TH-main1} and conditions \ref{IT_FEQ_a}, \ref{IT_FEQ_c} and  \ref{IT_FEQ_b} listed in Addendum  \ref{ADD:TH-main1} are pairwise equivalent. As for the equivalence of \ref{IT_TH-main1:semireg-cntct-pnt} and \ref{IT_FEQ_b} see Remark \ref{RM_Yamashita}.
	We structure the proof of the remaining implications as follows:
		\begin{center}\begin{tikzcd}[row sep=large]

			\quad & \arrow[Rightarrow,ld]  \text{\ref{IT_TH-main1:hder-to-1}} & \arrow[Rightarrow,l]  \text{\ref{IT_FEQ_c}} \\
			\text{\ref{IT_FEQ_a}}  \arrow[Rightarrow,r]   &
			\text{\ref{IT_TH-main1:semireg-cntct-pnt}}\arrow[Rightarrow,r]
			\arrow[Rightarrow,u]   &
			\text{\ref{IT_Th-main1:hdiffquo-to-1}}\arrow[Rightarrow,u]
		\end{tikzcd}
	\end{center}

	The implication \ref{IT_Th-main1:hdiffquo-to-1}~$\Longrightarrow$~\ref{IT_FEQ_c} is of course trivial, and \ref{IT_TH-main1:hder-to-1}~$\Longrightarrow$~\ref{IT_FEQ_a} holds in view of~\eqref{EQ:RadialToSectorial}. The implications \ref{IT_FEQ_c}~$\Longrightarrow$~\ref{IT_TH-main1:hder-to-1} and  \ref{IT_TH-main1:semireg-cntct-pnt}~$\Longrightarrow$~\ref{IT_TH-main1:hder-to-1} are not completely obvious, but not difficult to establish. The major chain of implications is \ref{IT_FEQ_a}~$\Longrightarrow$ \ref{IT_TH-main1:semireg-cntct-pnt} $\Longrightarrow$~\ref{IT_Th-main1:hdiffquo-to-1}.

	It is also worth mentioning that, from a formal point of view, we do not have to prove the implication \ref{IT_TH-main1:semireg-cntct-pnt}~$\Longrightarrow$~\ref{IT_TH-main1:hder-to-1}. Nevertheless, we will give a direct proof of this implication as it turns out to be very useful in the proof of \ref{IT_TH-main1:semireg-cntct-pnt}~$\Longrightarrow$~\ref{IT_Th-main1:hdiffquo-to-1}.

	\begin{proof}[\proofof{Theorem~\ref{TH-main1}: \ref{IT_FEQ_c} $\Longrightarrow$ \ref{IT_TH-main1:hder-to-1}}]
		Let $(z_n)$ be a sequence in~$\D$ satisfying condition~\ref{IT_FEQ_c} in Addendum~\ref{ADD:TH-main1}. Denoting \begin{gather*}
			d_n:=\hypdist_{\UD}\big(z_n,z_{n+1}\big),\qquad q_n:=\hypdist_\UD\big(\varphi(z_n),\varphi(z_{n+1})\big),\\
			\alpha_n:=\dhyp\varphi(z_n)~\text{\,if~${d_n=0}$}\quad\text{and}\quad \alpha_n:=q_n/d_n~\text{\,otherwise},
		\end{gather*}
		this means that
		$d_n \le C:=\sup_{k \in \N} d_k<+\infty$ and $\alpha_n\to1$ as ${n\to+\infty}$. Proposition~\ref{prop:Ineq_ApprUniv} implies that if $d_n\neq0$, then
		$$
		1-\dhyp \varphi(z_n) \le e^{d_n} \frac{d_n}{\sinh \big(d_n\big)} \left( 1-\alpha_n \right) \, .
		$$
		Since $x \mapsto e^{x} x/\sinh(x)$ is a strictly increasing function on $(0,+\infty)$, we  get
		$$
		0 \le 1-\dhyp \varphi(z_n) \le \frac{C e^{C}}{\sinh \big(C \big)} \left( 1-\alpha_n \right) \, ,
		$$
		which is trivially true also if~${d_n=0}$. It follows that ${\dhyp \varphi(z_n) \to 1}$ as ${n \to \infty}$. Therefore, Proposition~\ref{Prop:HypDer} shows that ${\dhyp \varphi(z) \to 1}$ as ${z \to \sigma}$ non-tangentially, and~\ref{IT_TH-main1:hder-to-1} follows immediately.
	\end{proof}

	\begin{proof}[\proofof{Theorem~\ref{TH-main1}: \ref{IT_TH-main1:semireg-cntct-pnt} $\Longrightarrow$ \ref{IT_TH-main1:hder-to-1} and \ref{IT_Th-main1:hdiffquo-to-1}}]

		Without loss of generality we may assume that ${\sigma=1}$. Passing to the right-half plane as explained in Section~\ref{SUB:half-plane}, we consider a self-map $F\in\Hol(\UH,\UH)$ with a  boundary fixed point at~$\infty$ and such that
		$$ \anglim_{\zeta\to \infty} \arg \frac{F(\zeta)}{\zeta} =0 \, .$$

		\StepG{Step 1:} \ref{IT_TH-main1:semireg-cntct-pnt} $\Longrightarrow$ \ref{IT_TH-main1:hder-to-1}.  Lemma~\ref{LM_isogonality} shows that \eqref{EQ_LM_isogonality} holds, that is,
$$ 
\frac{\zeta F'(\zeta)}{F(\zeta)} \to 1 \quad \text{ as }~\zeta \to \infty~\text{ non-tangentially} \, .
$$
		But then
		$$ 1 \ge \dhyp F(x)=\frac{x |F'(x)|}{\Re F(x)}
		\ge \left| \frac{x F'(x)}{F(x)} \right| \to 1 \quad \text{ as }~(0,+\infty)\ni x\to +\infty .$$
		Hence, $\dhyp F(x) \to 1$ as $(0,+\infty)\ni x \to +\infty$ and, as a result, the half-plane version of Proposition~\ref{Prop:HypDer} implies
		\begin{equation}\label{EQ_step1}
			\anglim_{\zeta\to\infty} \dhyp F(\zeta)=1,
		\end{equation}
		as desired.

		\StepG{Step 2:} \ref{IT_TH-main1:semireg-cntct-pnt} $\Longrightarrow$ \ref{IT_Th-main1:hdiffquo-to-1}.
		We have to prove the half-plane version of~\ref{IT_Th-main1:hdiffquo-to-1}, i.e.
		\begin{equation}\label{EQ_half-plane-version-of-c}
			\anglim \limits_{\omega,\zeta \to \infty} \frac{\hypdist_\UH\big(F(\omega),F(\zeta)\big)}{\hypdist_\UH\big(\omega,\zeta\big)}=1\,.
		\end{equation}

		\StepG{Step 2A}. We first show that
		\begin{equation}\label{EQ:HypDistQuotientAlongGamma}
			\lim \limits_{t>s\to+\infty} \frac{\hypdist_{\UH}\big(F(\gamma(s)),F(\gamma(t))\big)}{\hypdist_{\UH}\big(\gamma(s),\gamma(t)\big)} =1 \, ,
		\end{equation}
		where $\gamma:[0,+\infty)\to\UH$ is the smooth injective curve constructed in Proposition~\ref{PR_Real_axis_in_image}.

		Denote by ${[a,b]_h}$ the hyperbolic geodesic segment in $(\UH,\hypdist_\UH)$ joining $a$ to~$b$ and observe that by construction $G(s,t):={\big[F(\gamma(s)),F(\gamma(t))\big]_h}=F\big(\gamma|_{[s,t]}\big)$. Therefore,
		\begin{multline*}
			\hypdist_\UH\big(F(\gamma(s)),F(\gamma(t))\big) = \int \limits_{G(s,t)} \lambda_\UH(\omega) \, |\!\di{\omega}|~=\int \limits_{\gamma|_{[s,t]}} \dhyp F(\zeta) \, \lambda_\UH(\zeta) \, |\!\di{\zeta}|\\
			\ge\int \limits_{\gamma|_{[s,t]}} \lambda_\UH(\zeta) \, |\!\di{\zeta}|~\cdot~\min \limits_{\zeta \in \mathrm{tr}(\gamma|_{[s,t]})} \dhyp F(\zeta)
			~\ge~\hypdist_\UH\big(\gamma(s),\gamma(t)\big) ~\cdot~\min\limits_{\zeta \in \mathrm{tr}(\gamma|_{[s,t]})} \dhyp F(\zeta).
		\end{multline*}
		Moreover, since $\gamma(s)\to\infty$ non-tangentially as ${s\to+\infty}$ and since \eqref{EQ_step1} holds by \textsc{Step~1}, we have ${\dhyp F(\gamma(s)) \to 1}$ as ${s \to +\infty}$. Combined with the above inequality, this implies~\eqref{EQ:HypDistQuotientAlongGamma}.

		\StepG{Step 2B}. Now we will complete the proof by deducing~\eqref{EQ_half-plane-version-of-c} based on~\eqref{EQ:HypDistQuotientAlongGamma}.
		Suppose~\eqref{EQ_half-plane-version-of-c} does not hold. Then there are sequences $(\omega_n)$ and $(\zeta_n)$ in~$\UH$ converging non-tangentially to~$\infty$ such that
		\begin{equation} \label{EQ:HypDistQuotientAlongGammaContradictionP}
			\lim \limits_{n \to +\infty} \frac{\hypdist_\UH\big(F(\omega_n),F(\zeta_n) \big)}{\hypdist_\UH \big( \omega_n,\zeta_n \big)}<1 \, .
		\end{equation}
		Removing a finite number of terms, we may suppose that ${s_n:=|\omega_n|\ge1}$ and ${t_n:=|\zeta_n|\ge1}$ for all ${n\in\Natural}$. 
                Passing if necessary to subsequences, we may assume that either ${\hypdist_\UH(\omega_n,\zeta_n)\to+\infty}$ as ${n\to\infty}$, or $\hypdist_\UH(\omega_n,\zeta_n)\le M$ for some constant~${M>0}$ and all~${n\in\Natural}$. In the latter case,  we note that in \textsc{Step~1} we have already proved that condition \ref{IT_TH-main1:semireg-cntct-pnt} implies $\dhyp F(\omega) \to 1$ as $\omega \to \infty$ non-tangentially. In view of (the half-plane version of) Proposition~\ref{prop:Ineq_ApprUniv}, we therefore have
		\begin{align*}
			0 & \le 1-\frac{\hypdist_\UH\big(F(\omega_n),F(\zeta_n)\big)}{\hypdist_\UH(\omega_n,\zeta_n)} =\frac{\hypdist_\UH(\omega_n,\zeta_n)-\hypdist_\UH\big(F(\omega_n),F(\zeta_n)\big)}{\hypdist_\UH(\omega_n,\zeta_n)} \\
			&\le e^{\hypdist_\UH(\omega_n,\zeta_n)} \frac{\sinh \big(\hypdist_\UH(\omega_n,\zeta_n)\big)}{\hypdist_\UH(\omega_n,\zeta_n)} \big(1- \dhyp F(\omega_n) \big) \le e^M\,\frac{\sinh M}{M} \big(1-\dhyp F(\omega_n) \big) \to 0
		\end{align*}
		as $n\to\infty$, a contradiction to \eqref{EQ:HypDistQuotientAlongGammaContradictionP}.

		So we may suppose that  ${\hypdist_\UH(\omega_n,\zeta_n)}$ tends to~$+\infty$. By Property~\ref{IT_gamma-bdd-hyperbolic-distance} in Proposition~\ref{PR_Real_axis_in_image}, the distances $\hypdist_\UH\big(s_n,\gamma(s_n)\big)$  and $\hypdist_\UH\big(t_n,\gamma(t_n)\big)$ are bounded.
		Moreover, since $(\omega_n)$ and $(\zeta_n)$ are contained in some sector $A_\theta\subseteq\UH$, ${\theta\in(0,\pi/2)}$, also the distances $\hypdist_{\UH}\big(\omega_n,s_n\big)$ and $\hypdist_{\UH}\big(\zeta_n,t_n\big)$ are bounded. Using the triangle inequality and the Schwarz\,--\,Pick Lemma, we therefore conclude that
		\begin{equation}\label{EQ_hypdist_bdd}
			\begin{array}{ll}
				\hypdist_\UH\big(F(\omega_n),F(\gamma(s_n))\big)\le\hypdist_\UH\big(\omega_n,\gamma(s_n)\big)\le C_0 &\text{~and~}\\[1ex] \hypdist_\UH\big(F(\zeta_n),F(\gamma(t_n))\big)\le\hypdist_\UH\big(\zeta_n,\gamma(t_n)\big)\le C_0 &\text{~for all ${n\in\Natural}$}
			\end{array}
		\end{equation}
		and some constant ${C_0>0}$ large enough. Now, using the triangle inequality and~\eqref{EQ_hypdist_bdd}, we obtain
		\begin{align} \label{EQ:HypDistQuotientAlongGammaP1}
			\frac{\hypdist_\UH \big(\gamma(s_n),\gamma(t_n)\big)}{\hypdist_\UH(\omega_n,\zeta_n)} \ge \frac{\hypdist_\UH(\omega_n,\zeta_n)-\hypdist_\UH\big(\omega_n,\gamma(s_n)\big)-\hypdist_\UH\big(\zeta_n,\gamma(t_n)\big)}%
			{\hypdist_\UH(\omega_n,\zeta_n)}=:A_n \to 1 \quad \text{as~}~ n \to \infty.
		\end{align}
		In particular,  it follows that $\hypdist_\UH \big(\gamma(s_n),\gamma(t_n)\big)\to+\infty$ as ${n\to\infty}$. Thanks to~\eqref{EQ:HypDistQuotientAlongGamma}, it further follows that $\hypdist_\UH \big(F(\gamma(s_n)),F(\gamma(t_n))\big)\to+\infty$.
		Taking into account that by the triangle inequality,
		$$
		\hypdist_\UH\big(F(\gamma(s_n)),F(\gamma(t_n))\big) \le \hypdist_\UH\big(F(\omega_n),F(\zeta_n)\big)+\hypdist_\UH\big(F(\omega_n),F(\gamma(s_n))\big)+
		\hypdist_\UH\big(F(\zeta_n),F(\gamma(t_n))\big),
		$$
		and using again~\eqref{EQ_hypdist_bdd}, we conclude that also $\hypdist_\UH\big(F(\omega_n),F(\zeta_n)\big)\to+\infty$ as ${n\to\infty}$. Thus,
		\begin{multline} \label{EQ:HypDistQuotientAlongGammaP2}
			\frac{\hypdist_\UH\big(F(\gamma(s_n)),F(\gamma(t_n))\big)}{\hypdist_\UH\big(F(\omega_n),F(\zeta_n)\big)}\le B_n:=  \\[1.25ex]\text{~}\qquad= \frac{\hypdist_\UH\big(F(\omega_n),F(\zeta_n)\big)+\hypdist_\UH\big(F(\omega_n),F(\gamma(s_n))\big)+
				\hypdist_\UH\big(F(\zeta_n),F(\gamma(t_n))\big)}%
			{\hypdist_\UH\big(F(\omega_n),F(\zeta_n)\big)} \to 1  ~\text{~as~}~ n \to \infty.
		\end{multline}
		Combining~\eqref{EQ:HypDistQuotientAlongGamma}, \eqref{EQ:HypDistQuotientAlongGammaP1}, and~\eqref{EQ:HypDistQuotientAlongGammaP2}, we have
		\begin{eqnarray*}
			\frac{\hypdist_\UH\big(F(\omega_n),F(\zeta_n)\big)}{\hypdist_\UH \big(\omega_n,\zeta_n\big)} &=& \frac{~\hypdist_{\UH}\big(F(\gamma(s_n)),F(\gamma(t_n))\big)}{\hypdist_{\UH}\big(\gamma(s_n),\gamma(t_n)\big)~}\,%
			\frac{~\hypdist_\UH \big(\gamma(s_n),\gamma(t_n)\big)~}{\hypdist_\UH(\omega_n,\zeta_n)}\,%
			\frac{\hypdist_\UH\big(F(\omega_n),F(\zeta_n)\big)}{~\hypdist_\UH\big(F(\gamma(s_n)),F(\gamma(t_n))\big)~}\\
			&\ge&%
			\frac{~\hypdist_{\UH}\big(F(\gamma(s_n)),F(\gamma(t_n))\big)}{\hypdist_{\UH}\big(\gamma(s_n),\gamma(t_n)\big)~}\,%
			\frac{\,\,A_n\,}{B_n}~\,~\to~\,~1\quad \text{as~}~n\to\infty.
		\end{eqnarray*}
		This clearly contradicts~\eqref{EQ:HypDistQuotientAlongGammaContradictionP}, and thus the proof of~\eqref{EQ_half-plane-version-of-c}  is complete.
	\end{proof}

	In order to complete the proof of Theorem~\ref{TH-main1} we are left to prove that~\ref{IT_FEQ_a} implies~\ref{IT_TH-main1:semireg-cntct-pnt}. This is the tricky part. We begin with some preliminary observations.
	Since \ref{IT_FEQ_a} implies \ref{IT_TH-main1:hder-to-1} by Proposition~\ref{Prop:HypDer}, we can assume ${\dhyp \varphi(r\sigma) \to 1}$ as ${r \to 1}$. Clearly, without loss of generality we may also suppose that ${\sigma=1}$. Then, passing to the right-half plane as explained in Section~\ref{SUB:half-plane}, we see that the corresponding  holomorphic self-map~$F$ of~$\UH$ satisfies
	\begin{equation}\label{EQ_hderlim}
		\dhyp F(x)=\frac{x |F'(x)|}{\Re F(x)}\to 1\quad\text{as~$~x\to+\infty$.}
	\end{equation}
	Now fix some $x>0$, and denote
	\begin{equation*}\label{EQ_FxMapping}
		F_x \in \Hol(\UH,\UH), \, \quad F_x(\zeta):=\frac{F(x\zeta)-i\Im F(x)}{\Re F(x)},\quad \zeta\in\UH.
	\end{equation*}
	Note that $F_x(1)=1$. The idea now is to modify $F_x$ by precomposing it with a suitable automorphism of~$\UH$ to obtain a self-map $\widetilde{F}_x$ of~$\H$ with $\widetilde{F}_x(1)=1$ whose euclidean derivative $\widetilde{F}'_x(1)$ at the point $1$ coincides with the hyperbolic distortion $\dhyp F(x)$ at $x$.
	To this end, take $\mu \in \C^*=\C \setminus \{0\}$ and denote  by $L(\cdot;\mu)$ the unique automorphism of~$\UH$ satisfying ${L(1;\mu)=1}$ and ${\overline\mu L'(1;\mu)>0}$. Here the prime denotes the derivative w.r.t.~the first argument. For $\mu\not\in(0,+\infty)$ one can easily obtain the following explicit formula
	\begin{equation}\label{EQ_explicit_f-la}
		L(\zeta;\mu)=\frac{1+ia(\mu)\zeta}{ia(\mu)+\zeta},\quad a(\mu):=\frac{\Im\mu}{|\mu|-\Re\mu}\qquad (\zeta\in\UH).
	\end{equation}
	For $\mu>0$, we set $a(\mu):=\infty$, which results in $L(\cdot;\mu)=\id_\UH$. Clearly, ${a(\mu)=a(\rho\mu)}$ for any ${\rho>0}$.
	Moreover, it is not difficult to check that
	\begin{equation}\label{EQ_double}
		a\big(L'(2;\mu)\big)=\frac{a(\mu)}{2}
	\end{equation}
	whenever $\mu\neq0$. Finally, since ${\mu/|\mu|=(a+i)/(a-i)}={(a^2-1+2ia)/(a^2+1)}$, where ${a:=a(\mu)}$, we have
	\begin{equation}\label{EQ_product-formula}
		a(\mu_1\mu_2)=\frac{a(\mu_1)a(\mu_2)-1}{a(\mu_1)+a(\mu_2)}\quad\text{for all $~{\mu_1,\mu_2\in\C^*}$}.
	\end{equation}
	Alternatively, one can check that $a(\mu)=\cot(\frac12{\arg \mu})$. Then \eqref{EQ_product-formula} is the well-known addition formula for the cotangent function.
	Now, whenever $F'(x)\neq0$, we define
	$$\widetilde F_x:=F_x\circ L^{-1}(\cdot;F'(x))\, , $$
	and observe that
	\begin{equation}\label{EQ_arg-diff}
		F'(x\zeta)=\frac{\Re F(x)}{x}\,F_x'(\zeta)= \frac{\Re F(x)}{x}\,L'(\zeta;F'(x))\,\widetilde F'_x\big(L(\zeta;F'(x))\big),\quad \zeta\in\UH \, .
	\end{equation}
	In particular, $\widetilde{F}'_x(1)=\dhyp F(x)$.

	\begin{proof}[\proofof{Theorem~\ref{TH-main1}: \ref{IT_FEQ_a} $\Longrightarrow$ \ref{IT_TH-main1:semireg-cntct-pnt}}]

		In order to make use of our preliminary observations,   we pass to the right-half plane (assuming without loss of generality that ${\sigma=1}$) and consider a self-map $F\in\Hol(\UH,\UH)$ that satisfies~\eqref{EQ_hderlim}. Replacing, if necessary, $F(\zeta)$ by $F(\zeta+C)$ for a suitable ${C>0}$, we may further assume $F'(x)\neq0$ for all real ${x\ge1}$. Our goal is to prove that the angular limit ${F(\infty):=\anglim_{\zeta\to\infty}F(\zeta)}$ exists, with ${F(\infty)\in\partial\UH}$, and that the self-map $\widehat F$, obtained by post-composing~$F$ with an automorphism of~$\UH$ that moves $F(\infty)$ to~$\infty$,  satifies the condition ${\anglim_{\zeta\to\infty}\widehat F'(\zeta)/\big|\widehat F'(\zeta)\big|=1}$. Then we will use Lemma~\ref{LM_isogonality} to deduce that  $\widehat F$ is conformal at~$\infty$ in the weak sense, as desired.

		We divide the proof into several steps.

		\StepG{Step 1:} We first prove that ${\kappa_n:=F'(x_n)/|F'(x_n)|}$, where ${x_n:=2^n}$, converges. We will actually show that ${\kappa_n\to\pm1}$ as ${n\to+\infty}$. This statement is equivalent to
		\begin{subequations}
			\begin{align}
				\label{EQ_arg-to-zero}
				a\big(F'(x_n)\big)&\to\infty &&\text{as~$~n\to+\infty$}\\
				\label{EQ_arg-to-pi}
				\text{\llap{or\qquad}} a_n:=a\big(F'(x_n)\big)&\to0 &&\text{as~$~n\to+\infty$,}
			\end{align}
		\end{subequations}
		where $a:\C^*\to\Real\cup\{\infty\}$ is defined in~\eqref{EQ_explicit_f-la}.

		Suppose that~\eqref{EQ_arg-to-zero} does not hold. Then there exists a real number $b>0$ such that $|a_n|<b$ for infinitely many distinct values of~${n\in\Natural}$. We will prove that in this situation~\eqref{EQ_arg-to-pi} holds.

		Denote ${\widetilde a_n:=a\big(\widetilde F'_{x_n}\big(L(2;F'(x_n))\big)\big)}$. Note that
		$$
		L(2;F'(x_n))\in\BH(1,\log 2)\coloneqq\{\zeta\in\UH: \hypdist_\UH(\zeta,1)\leq \log2\}\quad\text{for all $~{n\in\Natural}$,}
		$$
		because $\hypdist_\UH(2,1)=\log 2$ and $L(\cdot;\mu)$ preserves the hyperbolic distance to~$1$.
		Therefore, taking into account that ${\widetilde F_{x_n}'(1)=\dhyp F(x_n)\to %0
			1}$ as ${n\to+\infty}$ and applying Lemma~\ref{LM_qualitative} to~$\widetilde F_{x_n}$, we see that
		\begin{equation}\label{EQ_atilde-to-infty}
			|\widetilde a_n|\to+\infty\quad\text{as~}~n\to+\infty.
		\end{equation}
		In particular, it follows that there exists ${n_0\in\Natural}$ such that ${|a_{n_0}|<b}$ as well as ${|\widetilde a_{n}|>(2+b^2)/b}$ for all ${n\ge n_0}$. Combining~\eqref{EQ_arg-diff} for~${\zeta:=2}$ and ${x:=x_n}$ with~\eqref{EQ_double} and~\eqref{EQ_product-formula}, we get
		\begin{equation}\label{EQ_a_n+1}
			a_{n+1}=%
			\frac{\tfrac12a_n \widetilde a_n-1}{\widetilde a_n+\tfrac12 a_n}\quad\text{for all~}~n\in\Natural.
		\end{equation}
		In particular, for ${n:=n_0
		}$, we have
		$$
		|a_{n_0+1}|
		\le\frac{1\,+\,\tfrac12b|\widetilde a_{n_0}|}{|\widetilde a_{n_0}|\,-\,\tfrac12b}%
		< b.
		$$
		For the second inequality we have used that as a function of $|\widetilde a_{n_0}|$ the rational expression above is~a decreasing function. Repeating the same argument with $n_0$ replaced by ${n_0+m}$ with ${m=1,2,\ldots}$, we see that
		\begin{equation*}\label{EQ_a-small-for-all}
			|a_n|<b\quad\text{for all~}~n\ge n_0.
		\end{equation*}
		Using again~\eqref{EQ_a_n+1} we can write $a_{n+1}=\tfrac12a_n+c_n$, where ${c_n\in\Real}$  satisfies
		$$
		|c_n|=%
		\left|\frac{1+\tfrac14a_n^2}{\widetilde a_n+\tfrac12 a_n}\right|%
		\le\frac{1+\tfrac14b^2}{|\widetilde a_n|-\tfrac12b}.
		$$
		for all $n \ge n_0$.
		In particular, in view of~\eqref{EQ_atilde-to-infty}, we have that  ${|c_n|\to0}$ as ${n\to+\infty}$.
		Thus,
		for any $m\in\Natural$,
		$$
		|a_{n_0+m}|=\left|\frac{a_{n_0}}{2^m}+\sum_{k=1}^{m}\frac{c_{n_0+k-1}}{2^{m-k}}\right|~\to~0\quad\text{as~}~m\to+\infty,
		$$
		which proves~\eqref{EQ_arg-to-pi}.

		\StepG{Step 2:} We next prove that
		\begin{subequations}
			\begin{align}
				\text{either}\quad \anglim_{\zeta \to \infty} F'(\zeta)/|F'(\zeta)|
				&= 1,\label{EQ+1}\\%
				\text{or}\quad \anglim_{\zeta \to \infty} \zeta^2 F'(\zeta)/\big|\zeta^2 F'(\zeta)\big|
				&= -1,\label{EQ-1}
			\end{align}
		\end{subequations}
		where the angular limits are to be understood in the sense of Definition~\ref{DF_anglim-in-iso-domain}.

		Suppose that $F'(2^n)/|F'(2^n)| \to 1$. Then \eqref{EQ+1} holds.
		Indeed, let
		$$
		G_n(\zeta):=F_{2^n}(\zeta)=\frac{F(2^n \zeta)-i \Im F(2^n)}{\Re F(2^n)} , \qquad \zeta \in \H \, .
		$$
		Then $G_n\in\Hol(\UH,\UH)$, $G_n(1)=1$, and
		$$
		G_n'(1)=\frac{2^n}{\Re F(2^n)} F'(2^n)=\dhyp F(2^n) \frac{F'(2^n)}{|F'(2^n)|} \to 1 \quad \text{ as } n \to \infty \, .
		$$
		This implies that every locally uniformly convergent subsequence of $(G_n)$ converges to the identity, so by a normal family argument $(G_n)$ converges locally uniformly in~$\H$ to the identity map. Hence
		$$
		\frac{F'(2^n \zeta)}{|F'(2^n \zeta)|}=\frac{G_n'(\zeta)}{|G_n'(\zeta)|} \to 1
		$$
		locally uniformly\footnote{Since $G_n'$ may vanish at some points of~$\UH$, the locally uniform convergence in this case should be understood in a slightly generalized sense, see Remark~\ref{RM_l.u.-convergence}.} in $\H$ as $n\to+\infty$. Now Lemma~\ref{lem:DiscreteHypDisksToAngularLimit} applied to $H:=F'/|F'|$ implies \eqref{EQ+1}.

		Next suppose $F'(2^n)/|F'(2^n)| \to -1$. Then the holomorphic functions $G_n : \H \to \H$ defined as above
		fix the point $1$ and $G_n'(1) \to -1$ as $n \to \infty$. This implies that $(G_n)$ converges locally uniformly in $\H$ to  ${\zeta \mapsto 1/\zeta}$. Applying Lemma~\ref{lem:DiscreteHypDisksToAngularLimit} to $H(\zeta):={\zeta^2F'(\zeta)/\big|\zeta^2F'(\zeta)\big|}$,  we see that in this case \eqref{EQ-1} holds.

		\StepG{Step 3.} In this step we complete the proof of \ref{IT_FEQ_a}~$\Longrightarrow$~\ref{IT_TH-main1:semireg-cntct-pnt} in case \eqref{EQ+1} holds.
		We first show
		\begin{equation}\label{EQ_to-infty}
			\lim \limits_{x \to +\infty} \Re~F(x)=+\infty.
		\end{equation}

		We have $\dhyp F(x) \to 1$  and $\kappa(x):={F'(x)/|F'(x)|}\to1$ as $x \to+\infty$. Therefore, by denoting ${f(x):=\log(\Re F(x))}$ for ${x\in(0,+\infty)}$, we obtain
		$$
		xf'(x)=\frac{x\Re F'(x)}{\Re F(x)}=\Re\kappa(x)\dhyp F(x)~\to~1\quad\text{as~$~(0,+\infty)\ni x \to+\infty$.}
		$$
		Hence, $\int\limits_1^{+\infty}f'(x)\,\di x=+\infty$, which easily implies~\eqref{EQ_to-infty}.

		By~\eqref{EQ_to-infty} we have ${F(x)\to\infty}$ as ${(0,+\infty)\ni x\to+\infty}$. Therefore, in view of Lindel\"of's angular limit theorem, see e.g. \cite[Theorem 1.5.7]{BCD-Book} or \cite[Theorem~9.3]{Pombook75}, it follows that ${F(\zeta)\to\infty}$ as ${\zeta\to\infty}$ non-tangentially; i.e., $F$ has a boundary fixed point at~$\infty$. Hence,~\eqref{EQ+1} implies, using Lemma~\ref{LM_isogonality}, that
		$$ \angle \lim \limits_{\zeta \to \infty} \arg \frac{F(\zeta)}{\zeta}=0 \, .$$
		Passing back to the unit disk, we see that $\varphi \in \Hol(\UD,\UD)$
		has the boundary fixed point~$1$ and
		$$ \angle \lim \limits_{z \to 1} \arg \frac{1-\varphi(z)}{1-z}= 0 \, .$$
		This completes the proof of the implication \ref{IT_FEQ_a}~$\Longrightarrow$~\ref{IT_TH-main1:semireg-cntct-pnt} in Theorem~\ref{TH-main1} under the assumption that~\eqref{EQ+1} holds.

		\StepG{Step 4.}  We are left to consider the alternative~\eqref{EQ-1}.

		Essentially the same argument as in \textsc{Step~3} allows us to conclude that in this case, $f(x):=\log (\Re F(x))\to-\infty$ and hence ${\Re F(x)\to0}$ as ${(0,+\infty)\ni x\to+\infty}$. Our next target is to prove the following claim.
		\medskip

		\noindent \textbf{Claim:} There exists a number $c \in \R$ such that $F(x)\to ic$ as $(0,+\infty)\ni x\to+\infty$.
		\medskip

		Note that from~\eqref{EQ-1} it follows that there exists a real number ${x_0>0}$ such that
		$$
		-|F'(x)|\le \Re F'(x) \le -\tfrac{1}2|F'(x)|\quad\text{for all~$~x\in[x_0,+\infty)$.}
		$$
		Taking this into account and recalling that ${\Re F(x)\to0}$ as ${x\to+\infty}$, we see that the integral $\int_{x_0}^{+\infty}\Re F'(x)\di x$ and hence also $\int_{x_0}^{+\infty}|F'(x)|\di x$ do converge. Therefore, $F$ is bounded on~$[x_0,+\infty)$. As a consequence, the limit set of~$F(x)$ as ${[x_0,+\infty)\ni x\to+\infty}$ is either a single point or a non-degenerate compact interval $I$ on the imaginary axis. To prove our claim, it is sufficient to rule out the latter alternative. For this purpose, fix some interior point~$ic_1$ of~$I$. Then there exists a sequence $(t_n)\subseteq{[x_0,+\infty)}$ tending to~$+\infty$  and such that
		\begin{equation}\label{EQ_diff-is-real}
			F(t_n)-ic_1\,\in\,\Real \quad\text{for all~$~n\in\Natural$.}
		\end{equation}
		Consider the self-map $F_*:=1/(F-ic_1)\in\Hol(\UH,\UH)$, which~--- as it is not difficult to see~--- satisfies
		$$
		\dhyp F_*(\zeta)=\dhyp F(\zeta)\,\to\,1\quad\text{as~$~\zeta\to\infty~$ non-tangentially.}
		$$
		Therefore, by the argument of \textsc{Steps~1 and~2} applied to~$F_*$ instead of~$F$, we have
		$$
		\frac{F'_*(x)}{|F'_*(x)|}\to\epsilon\in\{+1,-1\}\quad\text{as~$~(0,+\infty)\ni x\to+\infty$.}
		$$

		On the one hand, by~\eqref{EQ-1} and~\eqref{EQ_diff-is-real},
		$$
		\frac{F'_*(t_n)}{|F'_*(t_n)|}=-\frac{F'(t_n)}{|F'(t_n)|}\,\frac{|F(t_n)-ic_1|^2}{(F(t_n)-ic_1)^2}
		=-\frac{F'(t_n)}{|F'(t_n)|}~\to~1 \quad\text{as~$~n\to+\infty$},
		$$
		and hence ${\epsilon=1}$.

		On the other hand, if $ic_2\in I\setminus\{ic_1\}$ and if $(s_n)$ is a sequence in ${[x_0,+\infty)}$ tending to~$+\infty$ and such that $F(s_n)$ converges to~$ic_2$, then
		$$
		\frac{F'_*(s_n)}{|F'_*(s_n)|}=-\frac{F'(s_n)}{|F'(s_n)|}\,\frac{|F(s_n)-ic_1|^2}{(F(s_n)-ic_1)^2}
		~\to~\frac{|ic_2-ic_1|^2}{(ic_2-ic_1)^2}~=~-1 \quad\text{as~$~n\to+\infty$},
		$$
		and hence ${\epsilon=-1}$. This contradiction completes the proof of our claim.
		\medskip

		Consider $\widehat F:=1/(F-ic)\in\Hol(\UH,\UH)$, where $ic$ is the limit of~$F(x)$ as ${x\to+\infty}$. According to Lindel\"of's angular limit theorem, $\infty$~is boundary fixed point of~$\widehat F$. Since $\dhyp \widehat F=\dhyp F$,  one can repeat all the above arguments with $F$ replaced by~$\widehat F$. Moreover, note that the second alternative in \textsc{Step~2} cannot hold for~$\widehat F$ because by the claim proved above, this would imply that $\widehat F$ has a finite angular limit at $\infty$ instead of a boundary fixed point at~$\infty$.

		Thus,
		$$
		\anglim_{\zeta\to\infty}\widehat F(\zeta)~=~\infty\quad\text{and}
		\quad \anglim_{\zeta\to\infty}\frac{\widehat F'(\zeta)}{|\widehat F'(\zeta)|}~=~1.
		$$
		By Lemma~\ref{LM_isogonality}, it follows that
		$$ \angle \lim \limits_{\zeta \to \infty} \arg \frac{\widehat F(\zeta)}{\zeta}=0 \, .$$

		Passing back to the unit disk, we conclude
		$$
		\varphi(1):=\anglim_{z\to1}\varphi(z)~=~\frac{ic-1}{ic+1}\in\UC  \quad\text{and}\quad \anglim_{z\to1}\frac{1-\overline{\varphi(1)}\varphi(z)}{1-z}~=~0.
		$$
		The proof of the implication \ref{IT_FEQ_a}~$\Longrightarrow$~\ref{IT_TH-main1:semireg-cntct-pnt} of Theorem~\ref{TH-main1} is complete.
	\end{proof}

\section{Proof of Theorem~\ref{TH-main2} -- Part I: geometric point of view}\label{sec_proof-main2}

	In this section we treat the chain of implications \ref{IT_TH-main2:int-conv} $\Longrightarrow$ \ref{IT_TH-main2:regular-cntct-pnt} $\Longrightarrow$ \ref{IT_TH-main2:BetsakosKaramanlis-angular} $\Longrightarrow$ \ref{IT_TH-main2:BetsakosKaramanlis-radial} $\Longrightarrow$ \ref{IT_TH-main2:int-conv} in Theorem~\ref{TH-main2} and Addendum~\ref{ADD:TH-main2Condition(d)}.
	We first prove implication~\ref{IT_TH-main2:int-conv} $\Longrightarrow$ \ref{IT_TH-main2:regular-cntct-pnt} under an additional assumption. As in the proof of Theorem~\ref{TH-main1}, we work in the right half-plane model (see Section~\ref{SUB:half-plane}).

	\begin{proposition}\label{PR_weak}
		Suppose $F\in\Hol(\UH,\UH)$ has a boundary fixed point at~$\infty$ and $F$ is conformal at~$\infty$ in the weak sense. If
		\begin{equation}\label{EQ_int-conv-cond-weak}
			\int\limits_1^{+\infty}\big(1-\dhyp F(x) \big)\frac{\di x}{x}~<~+\,\infty,
		\end{equation}
		then ${F'(\infty)>0}$ (and hence, $F$ is conformal at~$\infty$ also in the strong sense).
	\end{proposition}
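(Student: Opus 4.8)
The plan is to pass to the right half-plane and exploit the curve $\gamma$ furnished by Proposition~\ref{PR_Real_axis_in_image}, whose image under $F$ runs along the positive real axis. We may assume ${F\notin\Aut(\UH)}$, since otherwise ${F'(\infty)>0}$ trivially. Recall that the angular limit ${F'(\infty):=\anglim_{\zeta\to\infty}F(\zeta)/\zeta}$ always exists and is a non-negative real number, so it suffices to rule out ${F'(\infty)=0}$.

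First, since $F$ has a boundary fixed point at $\infty$ and is conformal at $\infty$ in the weak sense, Proposition~\ref{PR_Real_axis_in_image} provides a smooth injective curve $\gamma:[0,+\infty)\to\UH$, parameterized by euclidean arc length, such that $F\circ\gamma$ parameterizes a half-line $[R,+\infty)\subseteq(0,+\infty)$ and such that $\Re\gamma(s)/s\to1$, $\arg\gamma(s)\to0$, and $\hypdist_\UH(\gamma(s),s)\le M$ for ${s\ge1}$. Next I would transfer the hypothesis~\eqref{EQ_int-conv-cond-weak} to the curve: by property~\ref{IT_gamma-bdd-hyperbolic-distance} and~\eqref{eq:Golusin2}, ${1-\dhyp F(\gamma(s))\le e^{2M}\big(1-\dhyp F(s)\big)}$, while by property~\ref{IT_gamma_Re}, ${\lambda_\UH(\gamma(s))=1/\Re\gamma(s)\le2/s}$ for $s$ large; hence
$$ J_\infty:=\int_1^{+\infty}\big(1-\dhyp F(\gamma(s))\big)\,\lambda_\UH(\gamma(s))\,\di s~<~+\infty\,. $$

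The core of the argument is a length identity along $\gamma$. Since the arc $F\circ\gamma|_{[1,t]}$ traces the geodesic segment joining $F(\gamma(1))$ to $F(\gamma(t))$ on the positive real axis — a hyperbolic geodesic of $\UH$ — its hyperbolic length equals $\hypdist_\UH\big(F(\gamma(1)),F(\gamma(t))\big)=\log\big(F(\gamma(t))/F(\gamma(1))\big)$; on the other hand, by the definition of $\dhyp F$ and $|\gamma'|\equiv1$, that same length equals $\int_1^t\dhyp F(\gamma(s))\,\lambda_\UH(\gamma(s))\,\di s$. Subtracting from $\hyplen_\UH(\gamma|_{[1,t]})=\int_1^t\lambda_\UH(\gamma(s))\,\di s$ gives
$$ \hyplen_\UH\big(\gamma|_{[1,t]}\big)-\log\frac{F(\gamma(t))}{F(\gamma(1))}~=~\int_1^t\big(1-\dhyp F(\gamma(s))\big)\,\lambda_\UH(\gamma(s))\,\di s~\in~[0,J_\infty]\,. $$
Combining this with the chain $\hypdist_\UH\big(F(\gamma(1)),F(\gamma(t))\big)\le\hypdist_\UH\big(\gamma(1),\gamma(t)\big)\le\hyplen_\UH(\gamma|_{[1,t]})$ — the first inequality being Schwarz\,--\,Pick and the second trivial — I would conclude that both $\hyplen_\UH(\gamma|_{[1,t]})-\hypdist_\UH(\gamma(1),\gamma(t))$ and $\hyplen_\UH(\gamma|_{[1,t]})-\log F(\gamma(t))$ stay bounded as $t\to+\infty$; hence $\log F(\gamma(t))=\hypdist_\UH(\gamma(1),\gamma(t))+O(1)$. (Morally, $\gamma$ is a quasi-geodesic whose image is a genuine geodesic, so $F$ expands by a bounded factor near $\infty$.)

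To finish, properties~\ref{IT_gamma_Re} and~\ref{IT_gamma_Im} guarantee that $\gamma(t)\to\infty$ non-tangentially with $|\gamma(t)|=\Re\gamma(t)\,(1+o(1))$, so a standard computation of the hyperbolic distance near a boundary point (cf.~\cite[Lemma~5.4.1]{BCD-Book}) gives $\hypdist_\UH(\gamma(1),\gamma(t))=\log|\gamma(t)|+O(1)$; therefore $F(\gamma(t))/|\gamma(t)|$ is bounded and bounded away from $0$. Since $\gamma(t)\to\infty$ non-tangentially and $F(\gamma(t))$ is a positive real, $F(\gamma(t))/|\gamma(t)|\to F'(\infty)$, so $F'(\infty)\in(0,+\infty)$; together with the boundary fixed point at $\infty$ this means $F$ is conformal at $\infty$ in the strong sense. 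I expect the two slightly delicate points to be checking that $F\circ\gamma|_{[1,t]}$ genuinely parameterizes the geodesic segment between its endpoints (which uses the injectivity and monotonicity built into the construction in Proposition~\ref{PR_Real_axis_in_image}) and the transfer of~\eqref{EQ_int-conv-cond-weak} to the finiteness of $J_\infty$; everything else is bookkeeping of $O(1)$ error terms.
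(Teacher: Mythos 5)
Your proof is correct and follows essentially the same route as the paper: transfer the integral condition to the curve $\gamma$ from Proposition~\ref{PR_Real_axis_in_image}, use the length identity $\hyplen_\UH(\gamma|_{[1,t]})-\log\big(F(\gamma(t))/F(\gamma(1))\big)=\int_1^t(1-\dhyp F(\gamma(s)))\lambda_\UH(\gamma(s))\,\di s$, and conclude $F(\gamma(t))\asymp |\gamma(t)|$ to force $F'(\infty)>0$. The paper expresses this as a direct manipulation of the integral $\int_1^t(1-\dhyp F(\gamma(s)))\,\di s/\Re\gamma(s)$ and only needs the lower bound, whereas you route through $\hypdist_\UH(\gamma(1),\gamma(t))$ via the Schwarz--Pick sandwich and obtain two-sided bounds, but this is just a difference of bookkeeping, not substance.
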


	Before we prove Proposition~\ref{PR_weak}, we observe that its converse is in fact much more elementary.
	\begin{lemma}\label{LM_elementary-implication}
		If $F\in\Hol(\UH,\UH)$ and $F'(\infty)>0$, then $F$ satisfies condition~\eqref{EQ_int-conv-cond-weak}.
	\end{lemma}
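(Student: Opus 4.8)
The plan is to recognize the integral in \eqref{EQ_int-conv-cond-weak} as the defect between a hyperbolic distance and a hyperbolic length, and then to bound that defect using Julia's lemma. Set $\alpha:=F'(\infty)=\anglim_{\zeta\to\infty}F(\zeta)/\zeta$; by hypothesis $\alpha>0$. Since $|F(\zeta)|=|\zeta|\,|F(\zeta)/\zeta|\to+\infty$ as $\zeta\to\infty$ non-tangentially, the point $\infty$ is a boundary fixed point of $F$, and Julia's lemma in the half-plane gives the key estimate $\Re F(\zeta)\ge\alpha\,\Re\zeta$ for all $\zeta\in\UH$. (In fact only $\Re F(x)/x\to\alpha$ as $x\to+\infty$ is needed, which is immediate from the definition of $F'(\infty)$; invoking Julia merely lets one keep the constant $\alpha$ rather than $\alpha/2$ below.)

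First I would fix $R>1$ and rewrite the truncated integral. Using $\lambda_\UH(x)=1/x$ and $\dhyp F(x)=\tfrac{x}{\Re F(x)}|F'(x)|$, so that $\dhyp F(x)\,\lambda_\UH(x)=\lambda_\UH(F(x))\,|F'(x)|$, we get
\[
\int_1^R\bigl(1-\dhyp F(x)\bigr)\frac{\di x}{x}
=\log R-\int_1^R \lambda_\UH\!\big(F(x)\big)\,|F'(x)|\,\di x .
\]
Here $\log R=\hypdist_\UH(1,R)$, and the second integral is the hyperbolic length of the curve $x\mapsto F(x)$, $1\le x\le R$, hence is bounded below by $\hypdist_\UH\!\big(F(1),F(R)\big)$. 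Therefore the left-hand side is at most $\hypdist_\UH(1,R)-\hypdist_\UH\!\big(F(1),F(R)\big)$.

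The remaining point is a lower bound for $\hypdist_\UH(F(1),F(R))$. By the triangle inequality $\hypdist_\UH(F(1),F(R))\ge\hypdist_\UH(1,F(R))-\hypdist_\UH(1,F(1))$, and from $\cosh\hypdist_\UH(1,u+iv)=1+\tfrac{(u-1)^2+v^2}{2u}\ge\tfrac{u^2+1}{2u}=\cosh(\log u)$ one reads off $\hypdist_\UH(1,\zeta)\ge\log\Re\zeta$ whenever $\Re\zeta\ge1$. Applying this with $\zeta=F(R)$ and using $\Re F(R)\ge\alpha R$ (which also guarantees $\Re F(R)\ge1$ once $\alpha R\ge1$) yields $\hypdist_\UH(1,F(R))\ge\log\alpha+\log R$. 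Combining, for every $R\ge 1/\alpha$,
\[
\int_1^R\bigl(1-\dhyp F(x)\bigr)\frac{\di x}{x}\ \le\ \hypdist_\UH\!\big(1,F(1)\big)-\log\alpha .
\]
Since $\dhyp F\le1$ by the Schwarz--Pick lemma the integrand is nonnegative, so letting $R\to+\infty$ gives $\int_1^{+\infty}\bigl(1-\dhyp F(x)\bigr)\,\di x/x\le\hypdist_\UH(1,F(1))-\log\alpha<+\infty$, which is \eqref{EQ_int-conv-cond-weak}.

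No genuine obstacle is expected; the one place where the hypothesis is essential is the linear growth $\Re F(R)\ge\alpha R$, which requires $F'(\infty)>0$ rather than merely $F'(\infty)\ge0$. That strict positivity cannot be dropped: for $F(\zeta)=\sqrt\zeta$ one has $F'(\infty)=0$ and $\dhyp F\equiv\tfrac12$, so the integral in \eqref{EQ_int-conv-cond-weak} diverges.
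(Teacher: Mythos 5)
Your proof is correct, but it takes a genuinely different route from the paper's. The paper's argument is a one-line exact calculation: observe that $\dhyp F(x)\ge x\Re F'(x)/\Re F(x)$ (since $|F'|\ge\Re F'$), note that $1-x\Re F'(x)/\Re F(x)$ multiplied by $1/x$ has the exact antiderivative $\log x-\log\Re F(x)$, and evaluate the improper integral using $\Re F(x)/x\to F'(\infty)>0$; the answer is $\log\Re F(1)-\log F'(\infty)$. You instead recognize the truncated integral as the hyperbolic-length defect $\hypdist_\UH(1,R)-\hyplen_\UH(F([1,R]))$, bound the length from below by the distance $\hypdist_\UH(F(1),F(R))$, and then control that distance with the triangle inequality plus the explicit estimate $\hypdist_\UH(1,\zeta)\ge\log\Re\zeta$ and Julia's inequality $\Re F(R)\ge\alpha R$. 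This is a correct and more geometric argument; in fact it is essentially the chain of implications ``strong conformality $\Rightarrow$ condition (b') $\Rightarrow$ condition (a)'' that the paper establishes elsewhere in the proof of Theorem~\ref{TH-main2} and in Remark~\ref{REM:TH-main2Condition(d)}, specialized to the radial ray. What it buys is conceptual transparency (the integral literally measures how much $F$ shortens the geodesic $[1,\infty)$); what it costs is several auxiliary steps (the $\cosh$ formula, Julia's lemma, the triangle inequality) where the paper simply antidifferentiates. Both yield finite bounds of the same flavor, yours being $\hypdist_\UH(1,F(1))-\log\alpha$, the paper's the slightly sharper $\log\Re F(1)-\log\alpha$. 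Your closing remark that $F'(\infty)>0$ (not merely $\ge0$) is essential, with the counterexample $F(\zeta)=\sqrt{\zeta}$, is apt.
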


	\begin{proof}
		By the hypothesis,  $\lim_{x\to+\infty}F(x)/x\,=\,F'(\infty)\in(0,+\infty)$. Therefore,
		\begin{eqnarray*}
			\int\limits_1^{+\infty}\left(1-\frac{x\Re F'(x)}{\Re F(x)}\right)\, \frac{\di x}{x}
			&=&\big(\log x\,-\,\log\Re F(x)\big)\Big|_{x=1}^{x=+\infty}\\
			&=&\log \Re F(1)-\lim_{x\to+\infty}\log\frac{\Re F(x)}{x}\\
			&=&\log\Re F(1)\,-\,\log F'(\infty)~<~+\infty.
		\end{eqnarray*}
		Since $0\,\le\, 1-\dhyp F(x)\,\le\,1-x\Re F'(x)/\Re F(x)$ for all ${x\ge0}$, \eqref{EQ_int-conv-cond-weak}~follows immediately.
	\end{proof}

	\begin{proof}[\proofof{Proposition~\ref{PR_weak}.}]
		Let $\gamma:[0,+\infty)\to\UH$ be the smooth injective curve from Proposition~\ref{PR_Real_axis_in_image}. We first show that~\eqref{EQ_int-conv-cond-weak} is equivalent to
		\begin{equation}\label{EQ_int-conv-cond-weak-for-gamma}
			\int\limits_{1}^{+\infty}\big(1-\dhyp F(\gamma(s))\big) \frac{\di s}{s}<+\infty .
		\end{equation}
		Indeed, in view of~\eqref{eq:Golusin2},
		$$ e^{-2\hypdist_\UH(\gamma(s),s )} \leq \frac{1-\dhyp F(\gamma(s))}{1-\dhyp F(s)} \leq e^{2\hypdist_\UH(\gamma(s),s )},\quad s\geq1.$$
		Further, by Proposition~\ref{PR_Real_axis_in_image}\ref{IT_gamma-bdd-hyperbolic-distance}, there is $M>0$ such that $\hypdist_\UH(\gamma(s),s)\leq M$ for all $s\geq1$. Hence, $$e^{-2M} \int\limits_{1}^{+\infty} \big(1-\dhyp F(s)\big) \frac{\di s}{s} \leq \int\limits_1^{+\infty} \big(1-\dhyp F(\gamma(s))\big) \frac{\di s}{s} \leq e^{2M} \int\limits_{1}^{+\infty} \big(1-\dhyp F(s)\big) \frac{\di s}{s}, $$
		which implies the equivalence of~\eqref{EQ_int-conv-cond-weak} and~\eqref{EQ_int-conv-cond-weak-for-gamma}.

		Next, by Proposition~\ref{PR_Real_axis_in_image}\ref{IT_gamma_Re}, there exists ${c>0}$ such that $cs\leq\Re\gamma(s)\leq s$ for all $s\geq1$. Further, by Proposition~\ref{PR_Real_axis_in_image}\ref{IT_gamma_arc-len-param} we have $|\gamma'(s)|=1$ for all $s\geq0$. Together with~\eqref{EQ_int-conv-cond-weak-for-gamma} this implies
		\begin{eqnarray*}
			+\infty~>~K&:=&\frac{1}{c}   \int\limits_{1}^{+\infty} \big(1-\dhyp F(\gamma(s))\big) \frac{\di s}{s} ~\geq~   \int\limits_{1}^t \big(1-\dhyp F(\gamma(s))\big) \frac{\di s}{ \Re \gamma(s)} \nonumber \\
			&=&  \int\limits_{1}^t \frac{|\gamma'(s)|}{ \Re \gamma(s)} \di s - \int\limits_{1}^t  \frac{|F^{\prime}(\gamma(s))| }{\Re F(\gamma(s))} |\gamma^{\prime}(s)| \di s
			~\geq~   \int\limits_{1}^t \frac{\Re \gamma'(s)}{ \Re \gamma(s)} \di s - \int\limits_{1}^t  \frac{F^{\prime}(\gamma(s)) \gamma^{\prime}(s)}{ F(\gamma(s))} \di s \nonumber \\
			&=& \log \Re \gamma(t) -\log F(\gamma(t)) +\log F(\gamma(1))- \log \Re \gamma(1)
		\end{eqnarray*}
		for all $t\geq 1$. In the last inequality, we have used the fact that ${s\mapsto F(\gamma(s))}$ is real-valued and strictly increasing.
		On the one hand, the above inequality ~implies that there exists some constant $C>0$ such that
		\begin{equation}\label{EQ_fromZero}
			\frac{F(\gamma(t))}{\Re \gamma(t)} \ge C\quad\text{for all~$~t\ge t_0$}.
		\end{equation}
		On the other hand, by the very definition, $F'(\infty)=\anglim_{\zeta\to\infty}F(\zeta)/\zeta$, where the limit is known to exist and belongs to~${[0,+\infty)}$, see e.g.~\cite[\S26]{Valiron:book}.
		In view of properties~\ref{IT_gamma_Re} and~\ref{IT_gamma_Im} in Proposition~\ref{PR_Real_axis_in_image}, it follows that
		$$
		\lim_{t\to+\infty}\frac{F(\gamma(t))}{\Re \gamma(t)}=\lim_{t\to+\infty}\frac{F(\gamma(t))}{\gamma(t)}\cdot
		\lim_{t\to+\infty}\frac{\gamma(t)}{\Re\gamma(t)}=F'(\infty)\cdot 1,
		$$
		and hence, recalling~\eqref{EQ_fromZero}, we see that~$F'(\infty)>0$ as desired.
	\end{proof}

	We can now turn to the proof that the conditions \ref{IT_TH-main2:int-conv}, \ref{IT_TH-main2:BetsakosKaramanlis-angular} and \ref{IT_TH-main2:regular-cntct-pnt} listed in Theorem~\ref{TH-main2} and condition~\ref{IT_TH-main2:BetsakosKaramanlis-radial} in Addendum~\ref{ADD:TH-main2Condition(d)} are pairwise equivalent.
	We (partially) use the half-plane model (see Section~\ref{SUB:half-plane}) and work with $F\in\Hol(\UH,\UH)$.

	\begin{proof}[\proofof{Theorem~\ref{TH-main2}: \ref{IT_TH-main2:int-conv}$~\Longrightarrow~$\ref{IT_TH-main2:regular-cntct-pnt}}]
		Using Proposition~\ref{prop:4}, we see that condition~\ref{IT_TH-main2:int-conv} implies that $\angle\lim_{z\to\sigma}\dhyp \varphi(z)=1$. By Theorem~\ref{TH-main1}, this is equivalent to $\varphi$ being conformal at~$\sigma$ in the weak sense. Then, passing to the right half-plane, we can apply Proposition~\ref{PR_weak}, which proves~\ref{IT_TH-main2:regular-cntct-pnt}.
	\end{proof}

	\begin{proof}[\proofof{Theorem~\ref{TH-main2}: \ref{IT_TH-main2:regular-cntct-pnt}~$\Longrightarrow$~\ref{IT_TH-main2:BetsakosKaramanlis-angular}}] Without loss of generality, take ${\sigma=1}$. Then, passing to the right half-plane setting, as explained in Section~\ref{SUB:half-plane}, yields a self-map $F\in\Hol(\UH,\UH)$ satisfying ${F'(\infty)>0}$. Replacing $F$ by $F/F'(\infty)$, we may suppose that ${F'(\infty)=1}$, i.e.
		\begin{equation}\label{EQ_tangent-to-identity}
			\anglim_{\zeta\to\infty}\frac{F(\zeta)}{\zeta}=1.
		\end{equation}
		For $\zeta_0\in\UH$, we denote by $T_{\zeta_0}$ the automorphism of~$\UH$ given by $T_{\zeta_0}(\zeta):={(\zeta-i\Im \zeta_0)/\Re \zeta_0}$.
		Using the Schwarz\,--\,Pick Lemma, the triangle inequality and the invariance of the hyperbolic distance w.r.t. automorphisms, we obtain
		\begin{eqnarray*}
			0~\le~\hypdist_\UH\big(\zeta_1,\zeta_2\big)-\hypdist_\UH\big(F(\zeta_1),F(\zeta_2)\big) &\le &
			\hypdist_\UH\big(F(\zeta_1),\zeta_1\big) + \hypdist_\UH\big(F(\zeta_2),\zeta_2\big)
			\\
			& = &
			\hypdist_\UH\big(T_{\zeta_1}(F(\zeta_1)),1\big) + \hypdist_\UH\big(T_{\zeta_2}(F(\zeta_2)),1\big)
		\end{eqnarray*}
		for all $\zeta_1,\zeta_2\in\UH$. Now \ref{IT_TH-main2:BetsakosKaramanlis-angular} follows from the observation that thanks to~\eqref{EQ_tangent-to-identity}, we have
		$$
		\anglim_{\zeta\to\infty}\hypdist_\UH\big(T_{\zeta}(F(\zeta)),1\big)=0.
		$$
		Indeed,
		$$
		\big|T_{\zeta}(F(\zeta))-1\big|~=~\frac{|\zeta|}{\Re \zeta}\left|\frac{F(\zeta)}{\zeta}-1\right|~\to~0\qquad\text{as $~\zeta\to\infty~$ non-tangentially,}
		$$
		and it remains to recall that on each compact subset of $\UH$, the euclidean distance is equivalent to~$\hypdist_\UH$; see~\cite[Remark 1.3.6]{BCD-Book}.
	\end{proof}

	\begin{proof}[\proofof{Theorem~\ref{TH-main2}: \ref{IT_TH-main2:BetsakosKaramanlis-angular}~$\Longrightarrow$~\ref{IT_TH-main2:BetsakosKaramanlis-radial}}\nopunct] is trivial.
	\end{proof}

	\begin{proof}[\proofof{Theorem~\ref{TH-main2}: \ref{IT_TH-main2:BetsakosKaramanlis-radial}~$\Longrightarrow$~\ref{IT_TH-main2:int-conv}}] In the half-plane model, condition~\ref{IT_TH-main2:int-conv} transforms to $${I:=\lim_{y\to+\infty} I(y)}<+\infty \, , $$   where
		$$
		I(y):=\int_1^{y}\big(1-\dhyp F(x)\big){\lambda_\UH(x)}\di x,\qquad y\ge1.
		$$
		The integrand is non-negative by the Schwarz\,--\,Pick Lemma. Moreover, since every segment ${[y_1,y_2]\subseteq(0,+\infty)}$ is a hyperbolic segment in~$\UH$, we have
		\begin{equation*}\label{EQ_I(a)I(b)}
			0~\le~I(b)-I(a)~=~\hypdist_\UH(a,b)-\ell_\UH\big(F([a,b])\big)~\le~\hypdist_\UH\big(a,b\big)-\hypdist_\UH\big(F(a),F(b)\big)
		\end{equation*}
		for all $b \ge a \ge 1$. Thus, if condition~\ref{IT_TH-main2:BetsakosKaramanlis-radial} is satisfied, then $I(y)$ has a finite limit as ${y\to+\infty}$, i.e. \ref{IT_TH-main2:int-conv} holds.
	\end{proof}

	\section{Proof of Theorem~\ref{TH-main2} -- Part~II: operator theory point of view}\label{SEC:Operatortheory}

	We now turn to the operator-theoretic aspect of Theorem~\ref{TH-main2}. Recall that Theorem~\ref{TH-main2} gives a characterization of a self-map $\varphi\in\Hol(\UD,\UD)$ conformal at a boundary point ${\sigma\in\partial\UD}$ in the strong sense, i.e. having angular limit~${\anglim_{z\to\sigma}\varphi(z)}\in\UC$ and finite angular derivative~$\varphi'(\sigma)$. The same situation occurs in the classical Julia\,--\,Wolff\,--\,Carath\'{e}odory theorem, which admits a purely operator-theoretic approach. This was first observed in~\cite{Sarason1988}. We will sketch this approach in the following subsection, and use it afterwards in order to place Theorem~\ref{TH-main2} into an operator-theoretic context.

	\subsection{De Branges\,--\,Rovnyak spaces}

	Every self-map $\varphi\in\Hol(\UD,\UD)$ induces a certain Hilbert space~${\Hbspace\subseteq\Hol(\UD,\C)}$,
	discovered and studied for the first time by de~Branges and Rovnyak~\cite{deBR} and commonly referred to by their names. For any $\varphi\in\Hol(\UD,\UD)$, the function
	\begin{equation}\label{EQ_kernel-explicit-form}
		k^{\varphi}(z,w)=k^\varphi_w(z):=\frac{1-\overline{\varphi(w)}\varphi(z)}{1-\overline{w}z}\quad z,w\in\UD,
	\end{equation}
	is positive definite. The de Branges\,--\,Rovnyak space~$\Hbspace$ can be defined as the unique RKHS (reproducing kernel Hilbert space) for which $k^\varphi$ is the reproducing kernel. 
        The latter means that
	\begin{equation}\label{EQ_reproducing-property}
		k^\varphi_w\in\Hbspace~\text{~and~}~f(w)=\langle f, k_w^\varphi \rangle_\varphi \quad \text{ for all }~f\in \Hbspace~\text{ and all }~w\in\UD,
	\end{equation}
	where $\langle \cdot, \cdot \rangle_\varphi$ stands for the inner product in~$\Hbspace$.
        For further details and a solid introduction into de Branges\,--\,Rovnyak space and RKHSs, we refer the interested reader to the monographs \cite{FM1,FM2,RKHSp,Sarason1994}. 
        
	In what follows, we will not need any explicit formula for~$\langle \cdot, \cdot \rangle_\varphi$.
	However, it is worth calculating the norm of the reproducing kernel:
	\begin{equation}\label{EQ_kernel-norm}
		\left\Vert k_w^\varphi \right\Vert_\varphi^2 = \langle k_w^\varphi, k_w^\varphi \rangle_\varphi =k_w^\varphi(w) = \frac{1-|\varphi(w)|^2}{1-|w|^2}
	\end{equation}
	for any $w\in\UD$. Note that the last expression in the above equality is Julia's quotient, which appears in the  Julia\,--\,Wolff\,--\,Carath\'{e}odory theorem.

	\subsection{Proof of Theorem~\ref{TH-main2}: operator-theoretic aspects}

Clearly, \ref{IT_TH-main2:operator-theory-liminf1} $\Longrightarrow$ \ref{IT_TH-main2:operator-theory}.
	In order to complete the proof of Theorem~\ref{TH-main2} and Addendum~\ref{ADD:TH-main2Condition(d)}, we are left to show 
        \ref{IT_TH-main2:BetsakosKaramanlis-angular}$\,\Longleftrightarrow\,$\ref{IT_TH-main2:operator-theory-liminf1} and \ref{IT_TH-main2:operator-theory}$\,\Longrightarrow\,$\ref{IT_TH-main2:regular-cntct-pnt}.

	\begin{proof}[\proofof{Theorem~\ref{TH-main2}: \ref{IT_TH-main2:BetsakosKaramanlis-angular}~$\Longleftrightarrow$~\ref{IT_TH-main2:operator-theory-liminf1}.}]
		Let $\varphi\in\Hol(\UD,\UD)$. The key observation for this part of the proof is that, in terms of the pseudo-hyperbolic distance, condition~\ref{IT_TH-main2:BetsakosKaramanlis-angular} is equivalent to the ``invariant version'' of Julia's quotient tending to~1, i.e.
		\begin{equation}\label{IT_TH-main2:pseudo-distance-quotient}
			\angle\lim_{z,w\to\sigma}\frac{1-\varrho_{\UD}\big(\varphi(z),\varphi(w)\big)^2}{1-\varrho_{\UD}(z,w)^2} = 1.
		\end{equation}
		Indeed, for any $z,w\in\UD$,
		$$
		0 ~\le~
		\log\frac{1-\varrho_{\UD}\big(\varphi(z),\varphi(w)\big)^2}{1-\varrho_{\UD}(z,w)^2} ~\le~
		\hypdist_\UD(z,w)-\hypdist_\UD(\varphi(z),\varphi(w))
		~\le~ \sqrt{\frac{1-\varrho_{\UD}\big(\varphi(z),\varphi(w)\big)^2}{1-\varrho_{\UD}(z,w)^2}-1},
		$$
		which is not difficult to check taking into account that by the Schwarz\,--\,Pick Lemma, $${0\le \varrho_\UD(\varphi(z),\varphi(w))} \le{\varrho_\UD(z,w) <1}.$$
		Now using the explicit expression of $\varrho_\UD$ and the properties of $k_z^\varphi$, i.e. \eqref{EQ_pseudohyperbolic-distance},  \eqref{EQ_kernel-explicit-form}, \eqref{EQ_reproducing-property}, and~\eqref{EQ_kernel-norm}, it is easy to see that
		\begin{equation}\label{EQ_kernel_condition_is_invariant_Julia}
			\angle\lim_{z,w\to\sigma}\frac{\left|\langle k_z^\varphi, k_w^\varphi \rangle_\varphi \right\vert^2}{\left\Vert k_z^\varphi \right\Vert_\varphi^2 \left\Vert k_w^\varphi \right\Vert_\varphi^2}
			~=~			\angle\lim_{z,w\to\sigma}\frac{1-\varrho_{\UD}(z,w)^2}{1-\varrho_{\UD}\big(\varphi(z),\varphi(w)\big)^2}.
		\end{equation}
		Thus, condition~\eqref{IT_TH-main2:pseudo-distance-quotient} --- and hence also~\ref{IT_TH-main2:BetsakosKaramanlis-angular} --- is equivalent to~\ref{IT_TH-main2:operator-theory-liminf1}.
	\end{proof}

	\begin{proof}[\proofof{Theorem~\ref{TH-main2}: \ref{IT_TH-main2:operator-theory}$~\Longrightarrow~$\ref{IT_TH-main2:regular-cntct-pnt}.}]
		{This part of the proof closely follows the operator-theoretic approach of Sarason to the Julia-Wolff-Carath\'{e}odory theorem, see e.g.~\cite[Theorem 21.1]{FM2}. In particular, we use that Julia's quotient is related to the norm of the kernel functions in~$\mathcal{H}(\varphi)$ by~\eqref{EQ_kernel-norm}.

		In order to take advantage of weak precompactness of bounded sets, it is convenient to work with the normalized reproducing kernels $\hat{k}_z^\varphi \in \Hbspace$, $z\in\D$, that is with the functions
		\[\hat{k}_z^\varphi:=\frac{k_z^\varphi}{\Vert k_z^\varphi \Vert_\varphi}.\]
		Clearly, $\Vert \hat{k}_z^\varphi \Vert_\varphi=1$. Choose a sequence $(z_n)\subseteq\UD$ converging to~$\sigma$ non-tangentially. By condition~\ref{IT_TH-main2:operator-theory} there is $w_0\in\UD$ such that
		\begin{equation}\label{EQ_IT_TH-main2:operator-theory-normalized}
			\angle \liminf_{n\to \infty} \left\vert\langle \hat{k}_{z_n}^\varphi, \hat{k}_{w_0}^\varphi \rangle_\varphi \right\vert > 0.
		\end{equation}
		Since $(\hat{k}_{z_n}^\varphi)$ is a bounded sequence in $\Hbspace$, we can assume, by passing to a suitable subsequence, if necessary, that there is ${q\in\Hbspace}$ such that ${\hat{k}_{z_n}^\varphi \weaklyto q}$ as ${n\to+\infty}$, i.e. for any~${f\in \Hbspace}$,
		$$
		\langle f, \hat{k}_{z_n}^\varphi \rangle_\varphi \to \langle f, q \rangle_\varphi \quad \text{as~}~ n\to+\infty.
		$$
		In view of~\eqref{EQ_IT_TH-main2:operator-theory-normalized} we therefore obtain
		\[ |q(w_0)|=\left\vert\langle q,\hat{k}_{w_0}^\varphi \rangle_\varphi\right\vert  >0.\]
		Since $(\varphi(z_n))\subseteq\overline{\UD}$ is a bounded sequence, we can assume --- by passing to a (further) subsequence, if necessary --- that there is ${\lambda\in\overline{\UD}}$ such that ${\varphi(z_n)\to\lambda}$ as ${n\to+\infty}$. This yields
		\[\lim_{n\to\infty} k_{z_n}^\varphi(w_0)
		=
		\lim_{n\to\infty}\frac{1-\overline{\varphi(z_n)}\varphi(w_0)}{1-\overline{z_n}w_0}
		=
		\frac{1-\overline{\lambda}w_0}{1-\overline{\sigma}w_0} \neq 0.\]
		Hence,
		\begin{align*}
			\lim_{n\to+\infty}\left(\frac{1-|z_n|^2}{1-|\varphi(z_n)|^2}\right)^{1/2}
			&=~
			\lim_{n\to+\infty}\left\Vert k_{z_n}^\varphi \right\Vert^{-1}
			~=~
			\lim_{n\to+\infty}\left\vert\frac{\hat{k}_{z_n}^\varphi(w_0)}{k_{z_n}^\varphi(w_0)}\right\vert
			\\ ~&=~
			\lim_{n\to+\infty}\left\vert\frac{\langle \hat{k}_{z_n}^\varphi, k_{w_0}^\varphi \rangle}{k_{z_n}^\varphi(w_0)}\right\vert
			~=~
			\left|\langle q, k_{w_0}^\varphi \rangle \frac{1-\overline{\sigma}w_0}{1-\overline{\lambda}w_0}\right|
			~=~
			\left| q(w_0) \, \frac{1-\overline{\sigma}w_0}{1-\overline{\lambda}w_0}\right| ~>~ 0.
		\end{align*}
		It follows that
		\[\lim_{n\to+\infty} \frac{1-|\varphi(z_n)|}{1-|z_n|} < +\infty. \]
		By the classical Julia\,--\,Wolff\,--\,Carath\'{e}odory Theorem, this implies that ${\anglim_{z\to\sigma}\varphi(z)}$ exists and belongs to~$\UC$ and that the angular derivative $\varphi'(\sigma)$ exists finitely; in other words, $\varphi$ is conformal at~$\sigma$ in the strong sense, as desired. The proof of Theorem~\ref{TH-main2} and  Addendum~\ref{ADD:TH-main2Condition(d)} is now complete.}
	\end{proof}

	\begin{remark}
		\begin{enumerate}
			\item We wish to mention that  Sarason's result (e.g.~\cite[Theorem 21.2]{FM2}) directly shows that condition~\ref{IT_TH-main2:regular-cntct-pnt} implies condition \ref{IT_TH-main2:operator-theory-liminf1}.

			\item The value~1 of the limit in condition~\ref{IT_TH-main2:operator-theory-liminf1}, i.e.\
			\[\angle \lim_{z,w\to \sigma}\left\vert\langle \hat{k}_z^\varphi, \hat{k}_w^\varphi \rangle_\varphi \right\vert
			= 1, \]
			has a geometric interpretation: the inner product can be understood as the cosine of the angle between the normalized kernel functions $\hat{k}_z^\varphi$ and $\hat{k}_w^\varphi$. This way, the limit demands that $\hat{k}_z^\varphi$ and $\hat{k}_w^\varphi$ are getting parallel as $z$ and $w$ approach $\sigma$. Note that condition~\ref{IT_TH-main2:operator-theory-liminf1} can be equivalently stated as
			\[\angle \lim_{z,w\to \sigma} \sqrt{1-\left\vert\langle \hat{k}_z^\varphi, \hat{k}_w^\varphi \rangle \right\vert^2}=0.\]
			This last expression for general reproducing kernel functions appears in the literature in combination with a certain distance function (see e.g.~\cite{Agler2002,ARSW,Sawyer,Seip}: if $\mathcal{H}$ is an arbitrary reproducing kernel Hilbert space on a set~$X$ and $\hat{k}_x, \hat{k}_y$ are the corresponding normalized kernel functions for $x,y\in X$, then
			\[\delta_\mathcal{H}:X\times X,\quad \delta_\mathcal{H}(x,y)=\sqrt{1-\left\vert\langle \hat{k}_x, \hat{k}_y \rangle \right\vert^2}\]
			defines a pseudo-metric. In particular, if $\mathcal{H}$ is the Hardy space $H^2(\D)$, then $\delta_{H^2(\D)}=\rho_\UD$. For treating interpolation problems one is interested in $\delta_\mathcal{H}$ being bounded away from zero. Our condition~\ref{IT_TH-main2:operator-theory-liminf1} somehow deals with the reverse situation.

\item \label{IT_Rem_limsup_condition} In order to establish condition~\ref{IT_TH-main2:regular-cntct-pnt} from condition~\ref{IT_TH-main2:operator-theory} it suffices to know that
			\begin{equation*}
				\limsup_{z\to\sigma}\left\vert\langle \hat{k}_z^\varphi,\hat{k}_{w_0}^\varphi \rangle \right\vert >0
			\end{equation*}
			for some $w_0\in\UD$. This is notably less information than the behaviour of the inner product along two sequences tending to the boundary simultaneously, which we assume in condition~\ref{IT_TH-main2:operator-theory}. In particular, the exact value of this double limit from condition~\ref{IT_TH-main2:operator-theory-liminf1} does not play~a role as long as it is strictly positive. However, Theorem~\ref{TH-main2} has a self-improving effect: once the limit inferior is strictly positive, i.e.\ condition~\ref{IT_TH-main2:operator-theory} holds, it already follows that the double angular limit is equal to~1 as  condition~\ref{IT_TH-main2:operator-theory-liminf1} states.
		\end{enumerate}
              \end{remark}

Taking the last remark into account and what we have observed in the beginning of the proof of   \ref{IT_TH-main2:BetsakosKaramanlis-angular}$\,\Longleftrightarrow\,$\ref{IT_TH-main2:operator-theory-liminf1} we can add the following conditions to the list of equivalent conditions of Theorem~\ref{TH-main2}.
	\begin{addendum}[\textsl{Further equivalent conditions to conformality at the boundary in the strong sense}]
		\label{ADD_3}
		A self-map $\varphi\in\Hol(\UD,\UD)$ is conformal at the point~$\sigma\in\partial\D$ in the strong sense if and only if  one~--- and hence each~--- of the following conditions holds:
		\setcounter{alphlistcounter}{1}
		\begin{alphlistPP}
			\item $\displaystyle\angle\lim_{z,w\to\sigma}\frac{1-\varrho_{\UD}\big(\varphi(z),\varphi(w)\big)^2}{1-\varrho_{\UD}(z,w)^2} =1 $;
			\medskip
			\item[\textbf{(b''')}] $\displaystyle\liminf_{z\to\sigma}\frac{1-\varrho_{\UD}\big(\varphi(z),\varphi(w_0)\big)^2}{1-\varrho_{\UD}(z,w_0)^2} <+\infty \quad \text{for some } w_0\in\D$;
			\medskip
			\item $\limsup_{z\to\sigma} \left\vert\langle \hat{k}_{z}^\varphi, \hat{k}_{w_0}^\varphi \rangle_\varphi\right\vert > 0\quad \text{for some } w_0\in\D$.
         \end{alphlistPP}
	\end{addendum}

 \begin{remark} \label{REM:TH-main2(c)VS(d)}
Of course, the fact that  \textbf{(b''')} implies strong conformality at $\sigma$ can also be shown directly with the help of Julia's lemma.
Condition  \textbf{(b''')} in terms of $\hypdist_\D$ instead of $\varrho_\D$ takes the form
\begin{equation} \label{EQ:LiminfAngDer}
  \liminf_{z \to \sigma} \left[ \hypdist_\D \big(z,w_0\big)-\hypdist_\D\big(\varphi(z),\varphi(w_0) \big)\right]<+\infty \quad \text{for some } w_0\in \D \, ,
  \end{equation}
which is clearly equivalent to
\begin{equation}\label{EQ_Julia-metric}
 \liminf_{z \to \sigma} \left[ \hypdist_\D \big(z,w_0\big)-\hypdist_\D\big(\varphi(z),w_0 \big)\right]<+\infty \quad \text{for some } w_0\in \D \, .
\end{equation}
The fact that the latter condition is equivalent to the conformality of $\varphi$ in the strong sense at~$\sigma$ is Theorem~1.7.8 in~\cite{BCD-Book}. As in the operator-theoretic picture the self-improving effect of Theorem~\ref{TH-main2} says that
\eqref{EQ:LiminfAngDer} is equivalent to
  $$\displaystyle \angle \lim \limits_{z,w \to \sigma}\big(\,\hypdist_\UD\big(z,w\big)-\hypdist_\UD\big(\varphi(z),\varphi(w)\big)\,\big)\,=\,0\, .$$
\end{remark}

	\section{Applications to Holomorphic Dynamical Systems}\label{SEC:ProofTH3}
	\subsection{Iteration theory and backward dynamics}\label{SUB_iteration}
			 In this subsection we recall basic definitions and facts from the theory of holomorphic iteration in the unit disk. For a complete up-to-date overview of the topic and proofs of the statements given below, we refer interested readers to~\cite{Marco} and in particular, to Chapter~4 of that monograph. A fundamental result concerning forward dynamics of a holomorphic self-map ${\varphi\in\Hol(\UD,\UD)}$, known as the Denjoy\,--\,Wolff Theorem, states that except for the trivial case of non-euclidean rotations $\varphi={\psi^{-1}\circ(z\mapsto e^{i\theta}z)\circ\psi}$, ${\psi\in\Aut(\UD)}$, ${\theta\in\Real}$, the sequence $(\varphi^{\circ n})$ formed by the iterates ${\varphi^{\circ 0}:=\id_{\UD}}$, ${\varphi^{\circ n}:=\varphi\circ\varphi^{\circ(n-1)}}$ for~${n\in\Natural}$, converges locally uniformly in~$\UD$ to a constant function ${g\equiv\tau_\varphi\in\overline{\UD}}$. The point $\tau_\varphi$ is called the \emph{Denjoy\,--\,Wolff} point of~$\varphi$.

			If $\tau_\varphi\in\UD$, then it is immediate to see that $\tau_\varphi$ is a fixed point of~$\varphi$. The same can be stated in the case where ${\tau_\varphi\in\UC}$, if we extend the notion of a fixed point to the boundary as follows: ${\sigma\in\UC}$ is said to be a \emph{boundary fixed point} for~$\varphi$ if the angular limit ${\anglim_{z\to\sigma}\varphi(z)}:=\varphi(\sigma)$ exists and coincides with~$\sigma$. In contrast to interior fixed points, which a holomorphic self-map ${\varphi\neq\id_\UD}$ has at most one, there can be many~--- and even infinitely many~--- boundary fixed points. Moreover, it is known that the angular derivative $\varphi'(\sigma):={\anglim_{z\to\sigma}\big(\varphi(z)-\sigma\big)/(z-\sigma)}$ exists at any boundary fixed point~$\sigma$; however, it can be infinite. More precisely, $\varphi'(\sigma)\in{(0,+\infty)\cup\{\infty\}}$.

			If ${\varphi'(\sigma)=\infty}$, then the boundary fixed point~$\sigma$ is called \emph{super-repulsive}. Otherwise, i.e. in case $\varphi'(\sigma)\in{(0,+\infty)}$, the boundary fixed point~$\sigma$ is said to be \emph{regular}. If ${\tau_\varphi\in\UC}$, then ${\varphi'(\tau_\varphi)\in(0,1]}$. At the same time, for any boundary regular fixed point~$\sigma$ other than the Denjoy\,--\,Wolff point~$\tau_\varphi$, we have ${\varphi'(\sigma)>1}$. Such points are often referred to as \emph{repulsive fixed points}.

			Observe that in our terminology introduced in Section~\ref{SEC:MainResults}, a boundary fixed point is regular if and only if the self-map is conformal at that point in the strong sense. Therefore, it seems rather natural to introduce  the following definition as well.
			\begin{definition}
				A boundary fixed point~$\sigma$ of a holomorphic self-map $\varphi\in\Hol(\UD,\UD)$ is said to be \emph{semiregular} if $\varphi$ is conformal at~$\sigma$ in the weak sense.
			\end{definition}

            \begin{remark}
            It should be noted that this is not a standard definition in the literature.
                \end{remark}

			Understanding the asymptotic behaviour of the forward orbits ${w_n:=\varphi^{\circ n}(w_0)}$ as ${n\to+\infty}$ has been classically regarded as the primary goal of holomorphic dynamics. However, in recent years the study of the \textit{backward} dynamics has become another focal point of research.

			\begin{definition}
				Let $\varphi\in\Hol(\UD,\UD)$ and $z_0\in\UD$. An infinite sequence ${(z_n)\subset\UD}$ is said to be a \emph{backward orbit} for~$\varphi$ with the \emph{initial point}~$z_0$, if ${z_{n-1}=\varphi(z_n)}$ for all~${n\in\Natural}$. A backward orbit~$(z_n)$ is said to be \emph{regular}, if $(z_n)$ has no accumulation point\footnote{It is worth pointing out that a backward orbit~$(z_n)$ can have an accumulation point in~$\UD$ only in two simple situations: when $\varphi$ is a non-euclidean rotation of~$\UD$, or when ${z_n=\tau_\varphi\in\UD}$ for all ${n\in\Natural}$; see e.g. \cite[Proposition~4.9.3]{Marco}.}  in~$\UD$ and if it is of bounded hyperbolic step, i.e. ${\sup_{n\in\Natural}\hypdist_\UD(z_n,z_{n+1})<+\infty}$.
			\end{definition}

			\begin{remark}\label{RM_step}
				According to the Schwarz\,--\,Pick Lemma, for any backward orbit $(z_n)$,
				$$
				\hypdist_\UD(z_n,z_{n+1})\le \hypdist_\UD(z_m,z_{m+1})\le \rho:=\lim_{k\to+\infty}\hypdist_\UD(z_k,z_{k+1})
				$$
				for all $n,m\in\N_0$ with $n\le m$.
			\end{remark}

			Repulsive fixed points turn out to play an important role for backward dynamics similar to that of the Denjoy\,--\,Wolff point~$\tau_\varphi$ in the forward dynamics. In particular, every regular backward orbit converges to a repulsive fixed point, with the only possible exception: if ${\tau_\varphi\in\UC}$ and ${\varphi'(\tau_\varphi)=1}$, then there can exist regular backward orbits converging to~$\tau_\varphi$. For the exact statement of this result and further details on backward dynamics in~$\UD$, we refer interested readers to \cite[Section~4.9]{Marco} and to paper \cite{Bracci2003, Poggi-Corradini, Poggi-Corradini2003}. A sort of converse statement is contained in the following result by Poggi-Corradini.

			When it comes to questions related to boundary fixed points, it proves to be convenient to pass with the help of a suitable conformal map to the half-plane setting (see Section~\ref{SUB:half-plane}), with the fixed point under consideration being moved to~$\infty$. Clearly, all the definitions and results mentioned above can be reformulated for holomorphic self-maps ${f\in\Hol(\UH,\UH)}$.

		\begin{proposition}[\protect{\cite[Theorem 1.2]{Poggi-Corradini}}]\label{PR_Premodel_UH}
			Suppose that $f \in \Hol(\UH, \UH)$ has a repulsive fixed point at $\infty$. Then there exists $F\in \Hol(\UH, \UH)$ that satisfies the functional equation
			\begin{equation}\label{EQ_premodel-eq-in-the-proof}
				f\big(F(\zeta)\big)\,=\,F\big(f'(\infty)\zeta\big),\quad \zeta\in\UH.
			\end{equation}
			and has a boundary semiregular fixed point at~$\infty$. The map $F$ is unique up to precomposition with a M\"obius transformation of the form $\zeta\mapsto \alpha \zeta$, where $\alpha \in (0, +\infty)$.
			\end{proposition}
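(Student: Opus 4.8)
\medskip
\noindent\textbf{Proof proposal.} The proposition is Poggi-Corradini's theorem; the plan is to recover it by a Koenigs-type limiting procedure anchored on a regular backward orbit of $f$, followed by a rigidity argument for uniqueness. Since $\infty$ is a repulsive fixed point, the backward iteration of $f$ furnishes a regular backward orbit $(\zeta_n)\subseteq\UH$, $f(\zeta_{n+1})=\zeta_n$, converging to $\infty$ non-tangentially and of bounded hyperbolic step; by the Schwarz\,--\,Pick Lemma the steps $\hypdist_\UH(\zeta_n,\zeta_{n+1})$ increase to a finite limit $\rho>0$, and a short computation with Proposition~\ref{prop:Ineq_ApprUniv} (take $z=\zeta_{n+1}$, $w=\zeta_{n+2}$) gives $\dhyp f(\zeta_n)\to1$. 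Let $A_n\in\Aut(\UH)$ be the affine map $A_n(\zeta):=(\zeta-i\Im\zeta_n)/\Re\zeta_n$, so $A_n(\zeta_n)=1$, and put $F_n:=f^{\circ n}\circ A_n^{-1}\in\Hol(\UH,\UH)$, so that $F_n(1)=f^{\circ n}(\zeta_n)=\zeta_0$ for all $n$. Then $(F_n)$ is normal, and writing $g_n:=A_{n+1}\circ A_n^{-1}\in\Aut(\UH)$ one has the cocycle identity $f\circ F_n=F_{n+1}\circ g_n$.

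Since the orbit lies in a fixed Stolz angle at $\infty$, the scaling factors $\Re\zeta_n/\Re\zeta_{n+1}$ of the affine automorphisms $g_n$ stay in a compact subinterval of $(0,+\infty)$, so $(g_n)$ is also normal; passing to a common subsequence, $F_n\to F$ and $g_n\to g$ locally uniformly, where $g$ is again an affine automorphism of $\UH$. Its derivative at $\infty$ equals $\lim_n\Re\zeta_n/\Re\zeta_{n+1}=f'(\infty)$ (because $\zeta_n=f(\zeta_{n+1})$ and $\anglim_{\zeta\to\infty}f(\zeta)/\zeta=f'(\infty)$ with $\zeta_{n+1}\to\infty$ non-tangentially), hence $g$ is a hyperbolic automorphism fixing $\infty$; conjugating by the translation sending its other (boundary) fixed point to $0$ — and absorbing this into the indeterminacy $\zeta\mapsto\alpha\zeta$ permitted in the statement — we may assume $g=m$, $m(\zeta):=f'(\infty)\,\zeta$. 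Letting $n\to\infty$ in the cocycle identity yields $f\circ F=F\circ m$, i.e.\ \eqref{EQ_premodel-eq-in-the-proof}.

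The heart of the matter — and the step I expect to be the main obstacle — is to show that $F$ is \emph{not} constant. This is exactly where the regularity (bounded hyperbolic step) of $(\zeta_n)$ is indispensable: one must rule out that the $F_n$ collapse a fixed hyperbolic ball around $1$. The cleanest route is to build the conjugacy \emph{locally} first, on a fundamental region for $m$ near $\infty$, where the relevant iterates are controlled via the Julia\,--\,Wolff\,--\,Carath\'eodory estimates together with the uniform bound $\hypdist_\UH(\zeta_n,\zeta_{n+1})\le\rho$, and then to propagate it to all of $\UH$ using $F=f\circ F\circ m^{-1}$ (iterating $m$ exhausts $\UH$). Once $F$ is non-constant, the functional equation and Hurwitz's theorem give, by the classical Koenigs argument, that $F$ is univalent on $\UH$. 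To see finally that $F$ has a boundary fixed point at $\infty$ and is conformal there in the \emph{weak} sense, iterate the functional equation to get $f^{\circ n}\circ F=F\circ m^{\circ n}$: thus $F$ carries the model axis $\{m^{\circ n}(t):n\in\Z\}$, $t>0$, which tends to $\infty$ radially, to an $f$-orbit accumulating only at $\infty$, so by the Lindel\"of-type theorem (cf.\ Proposition~\ref{Prop:HypDer} and \cite[Theorem~1.5.7]{BCD-Book}) $\anglim_{\zeta\to\infty}F(\zeta)=\infty$; and the invariance $\hypdist_\UH\!\big(F(m^{\circ n}(\zeta)),F(m^{\circ n}(1))\big)=\hypdist_\UH(\zeta,1)$ along the axis, combined with \eqref{eq:Golusin2} and the normal-family reasoning from the proof of Proposition~\ref{Prop:HypDer}, yields $\anglim_{\zeta\to\infty}\arg\big(F(\zeta)/\zeta\big)=0$, i.e.\ weak conformality at $\infty$.

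For uniqueness, if $F_1,F_2\in\Hol(\UH,\UH)$ both solve \eqref{EQ_premodel-eq-in-the-proof}, then on a suitable subdomain $F_1^{-1}\circ F_2$ is a holomorphic injection of $\UH$ into itself commuting with $m$; since $m$ is a hyperbolic automorphism with fixed points $0$ and $\infty$, any such map must be of the form $\zeta\mapsto\alpha\zeta$ with $\alpha\in(0,+\infty)$, which is the asserted rigidity. As flagged, the delicate points are the non-degeneracy of the limit $F$ and the identification of the base map with $m_{f'(\infty)}$; both rest on the bounded-step property of the backward orbit and on the Schwarz\,--\,Pick-type estimates of Section~\ref{SEC_Properties-Hyperbolic-Distortion}.
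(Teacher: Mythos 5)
The paper does not prove this proposition but cites it from \cite{Poggi-Corradini}, so there is no paper proof to compare against; I assess your proposal on its own merits. Your strategic outline — renormalizing $f^{\circ n}$ by the automorphisms $A_n$ based at a regular backward orbit, exploiting the cocycle identity $f\circ F_n=F_{n+1}\circ g_n$, and passing to a normal-family limit to obtain the functional equation — is the right Koenigs-type construction, and the identification $g'(\infty)=f'(\infty)$ via Julia--Wolff--Carath\'eodory along the backward orbit is sound. However, the reduction of $g$ to the dilation $m$ is by conjugating with a \emph{translation}, which is \emph{not} covered by the stated indeterminacy $\zeta\mapsto\alpha\zeta$; the step is still valid, but must be described as replacing $F$ by $F$ precomposed with the inverse translation, not as absorbing it into that indeterminacy.

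Beyond this there are three genuine gaps. (a) \emph{Non-constancy of $F$}: you correctly flag this as the crux but offer only a vague sketch; a direct argument you missed is that $F_n(1)=f^{\circ n}(\zeta_n)=\zeta_0$ while $F_n(g_n^{-1}(1))=f^{\circ n}(A_{n+1}^{-1}(1))=f^{\circ n}(\zeta_{n+1})=\zeta_1$, and since $\hypdist_\UH\big(g_n^{-1}(1),1\big)=\hypdist_\UH(\zeta_n,\zeta_{n+1})\to\rho>0$ one has $g_n^{-1}(1)\to g^{-1}(1)\neq 1$, so in the limit $F(1)=\zeta_0\neq\zeta_1=F(g^{-1}(1))$. (b) \emph{Semiregularity at $\infty$}: the claimed identity $\hypdist_\UH\big(F(m^{\circ n}(\zeta)),F(m^{\circ n}(1))\big)=\hypdist_\UH(\zeta,1)$ is false — using $F\circ m=f\circ F$ and the Schwarz--Pick Lemma one only gets $\le$, and generically it is strict — so your derivation of $\anglim_{\zeta\to\infty}\arg\big(F(\zeta)/\zeta\big)=0$ collapses; showing that $\infty$ is a \emph{semiregular} boundary fixed point of $F$ is a substantial part of Poggi-Corradini's theorem and cannot be read off the functional equation this way. (c) \emph{Uniqueness}: univalence of $F$ does not follow from ``Hurwitz plus the classical Koenigs argument,'' since the maps $F_n=f^{\circ n}\circ A_n^{-1}$ are in general not injective (because $f$ is not), so Hurwitz's theorem gives no injectivity of the limit; and even granting univalence of $F_1$, the map $F_1^{-1}\circ F_2$ is only defined if $F_2(\UH)\subseteq F_1(\UH)$, while holomorphic self-maps of $\UH$ commuting with $m$ need not be of the form $\zeta\mapsto\alpha\zeta$ unless one first shows they are automorphisms. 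In sum, the plan is appropriate, but the non-constancy, the weak conformality at $\infty$, and the uniqueness all require substantially more work than the proposal supplies.
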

            In fact, aside from a finite number of initial terms, every regular backward orbit $(w_n)$ of~$f$ converging to~$\infty$ is of the form $w_n=F\big(f'(\infty)^{-n}\zeta_0\big)$ with a suitable ${\zeta_0\in\UH}$. In a sense, this is a reason why the triple $(\UH, F, \zeta \mapsto f^{\prime}(\infty) \zeta)$  is usually called a \emph{pre-model} for~$f$ at~$\infty$. If the boundary fixed point of~$F$ at $\infty$ is not only semiregular, but also regular, i.e. if ${F^{\prime}(\infty)>0}$, then the pre-model of~$f$ at~$\infty$ is said to be \emph{regular}.

            To conclude, we mention that the backward iteration has been studied in other contexts as well: for holomorphic flows in~$\UD$ with continuous time \cite{BackwSemigr,ShoiElinZalc,MK-KZ2022}, for discrete iteration in several complex variables \cite{Abate_Raissy_2011, Arosio2017, AB2016,  AG2019}, as well as in a much more general setting of non-expanding maps in Gromov hyperbolic metric spaces~\cite{AFGK2024}.

		\subsection{Proof of Theorem~\ref{TH-main3} and Corollary~\ref{CR_fromTH-main3}}

		In the current subsection, we again work in the right half-plane setting, see Section~\ref{SUB:half-plane}.
		Before proving Theorem \ref{TH-main3}, we provide a discrete analogue of condition~\eqref{EQ_int-conv-cond-weak}.

		\begin{proposition}\label{PR_cont<=>discrete}
			Let $F\in\Hol(\UH,\UH)$. Fix some ${\zeta_0\in\UH}$ and some ${\lambda>0}$. Then condition~\eqref{EQ_int-conv-cond-weak} holds if and only if
			\begin{equation}\label{EQ_main-cond-discrete_form}
				\sum_{n=1}^{+\infty} \big(1-\dhyp F(\lambda^n \zeta_0)\big)~<~+\infty.
			\end{equation}
		\end{proposition}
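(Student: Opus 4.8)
The plan is to reduce \eqref{EQ_main-cond-discrete_form} to \eqref{EQ_int-conv-cond-weak} by the same device used in the proof of Proposition~\ref{prop:5}, namely the half-plane form of the two-sided estimate \eqref{eq:Golusin2},
\[
e^{-2\hypdist_\UH(z,w)}\ \le\ \frac{1-\dhyp F(z)}{1-\dhyp F(w)}\ \le\ e^{2\hypdist_\UH(z,w)},\qquad z,w\in\UH,
\]
which is available as soon as $F\notin\Aut(\UH)$. If $F\in\Aut(\UH)$, then $\dhyp F\equiv1$ and both \eqref{EQ_int-conv-cond-weak} and \eqref{EQ_main-cond-discrete_form} hold trivially, so we may assume $F\notin\Aut(\UH)$. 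We may also assume $\lambda>1$, so that the points $\lambda^n\zeta_0$, $n\in\N$, tend to~$\infty$ non-tangentially along a ray from the origin, with constant consecutive hyperbolic step $\hypdist_\UH(\lambda^n\zeta_0,\lambda^{n+1}\zeta_0)=\log\lambda$.

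The first step is to pass from $\zeta_0$ to the point $x_0:=|\zeta_0|$ on the positive real axis. For each $n$, the dilation $\zeta\mapsto\lambda^n\zeta$ is an automorphism of~$\UH$, hence a hyperbolic isometry, taking $x_0$ to $\lambda^nx_0=|\lambda^n\zeta_0|$ and $\zeta_0$ to $\lambda^n\zeta_0$; therefore $\hypdist_\UH(\lambda^n\zeta_0,\lambda^nx_0)=\hypdist_\UH(\zeta_0,x_0)=:L$ for all $n$, and the estimate above gives
\[
e^{-2L}\big(1-\dhyp F(\lambda^nx_0)\big)\ \le\ 1-\dhyp F(\lambda^n\zeta_0)\ \le\ e^{2L}\big(1-\dhyp F(\lambda^nx_0)\big).
\]
Consequently \eqref{EQ_main-cond-discrete_form} is equivalent to $\sum_{n\ge1}\big(1-\dhyp F(\lambda^nx_0)\big)<+\infty$, and it remains to compare this series with the integral in \eqref{EQ_int-conv-cond-weak}.

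For the second step I would argue exactly as in the proofs of Propositions~\ref{prop:4} and~\ref{prop:5}. Fix $n_1\in\N$ with $\lambda^{n_1}x_0\ge1$ and put $I_n:=[\lambda^nx_0,\lambda^{n+1}x_0]$ for $n\ge n_1$. The positive real axis is a hyperbolic geodesic along which $\hypdist_\UH(a,b)=|\log(b/a)|$, so $\int_{I_n}\lambda_\UH(x)\,\di x=\log\lambda=:\ell$ and $\hypdist_\UH(x,\lambda^nx_0)\le\ell$ for every $x\in I_n$. Applying the two-sided estimate once more with $w=\lambda^nx_0$ and $z=x\in I_n$ and integrating over $I_n$ yields
\[
e^{-2\ell}\,\ell\,\big(1-\dhyp F(\lambda^nx_0)\big)\ \le\ \int_{I_n}\big(1-\dhyp F(x)\big)\lambda_\UH(x)\,\di x\ \le\ e^{2\ell}\,\ell\,\big(1-\dhyp F(\lambda^nx_0)\big).
\]
Summing over $n\ge n_1$ and using that the $I_n$ cover $[\lambda^{n_1}x_0,+\infty)$ with non-overlapping interiors shows that $\int_{\lambda^{n_1}x_0}^{+\infty}\big(1-\dhyp F(x)\big)\lambda_\UH(x)\,\di x$ is finite if and only if $\sum_{n\ge n_1}\big(1-\dhyp F(\lambda^nx_0)\big)<+\infty$. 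Since $0\le1-\dhyp F\le1$ by the Schwarz\,--\,Pick lemma, the integral over the compact interval $[1,\lambda^{n_1}x_0]$ is finite and the finitely many initial terms of the series are harmless; hence \eqref{EQ_int-conv-cond-weak} holds if and only if $\sum_{n\ge1}\big(1-\dhyp F(\lambda^nx_0)\big)<+\infty$, which by the first step is equivalent to \eqref{EQ_main-cond-discrete_form}.

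I do not expect a genuine obstacle here: once \eqref{eq:Golusin2} is used to replace $\dhyp F$ at a point by its value at a nearby point, the argument is essentially the integral test. The only slightly delicate points are the reduction from $\zeta_0$ to a real point via the isometries $\zeta\mapsto\lambda^n\zeta$ and the routine bookkeeping that discards the finite initial parts of the integral and of the series.
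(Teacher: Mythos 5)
Your proof is correct and follows essentially the same route as the paper's. The paper cites the half-plane analogue of Proposition~\ref{prop:5} for the integral-versus-sum comparison and then applies~\eqref{eq:Golusin2} to transfer from the real axis to the ray through~$\zeta_0$ using the fact that the dilation $\zeta\mapsto\lambda^n\zeta$ is a hyperbolic isometry; you do the two steps in the opposite order and re-derive the Proposition~\ref{prop:5} estimate from scratch by the integral-test argument, but the mathematical content is identical.
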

		\begin{proof}
			Using the half-plane analogue of Proposition \ref{prop:5} with with the automorphism of $\UH$ given by ${\zeta\mapsto\lambda \zeta}$, we obtain that \eqref{EQ_int-conv-cond-weak} is equivalent to the convergence of $\sum (1-\dhyp F(\lambda^n))$. Applying~\eqref{eq:Golusin2}, we get the desired equivalence, since the distance $\hypdist_{\UH}(\lambda^n,\lambda^n \zeta_0)$ does not depend on~$n$.
		\end{proof}

		\begin{proof}[\proofof{Theorem~\ref{TH-main3}}]
			Suppose $f \in \Hol(\UH, \UH)$ has a repulsive fixed point at~$\infty$. According to Proposition~\ref{PR_Premodel_UH}, there exists some ${F \in \Hol(\UH, \UH)}$ that has a semiregular fixed point at~$\infty$ and satisfies~\eqref{EQ_premodel-eq-in-the-proof}. Differentiating both parts of~\eqref{EQ_premodel-eq-in-the-proof}, we have
			\begin{equation}\label{EQ_premodel-der}
				f'\big(F(\zeta)\big)F'(\zeta)={ f^{\prime}(\infty) F'(f^{\prime}(\infty)\zeta)} \,
				\quad \text{for every } \zeta\in\UH.
			\end{equation}
				Let $(w_n)$ be a regular backward orbit of~$f$ converging to~$\infty$. Removing if necessary a finite number of terms, we have $w_n=F(\lambda^{n} \zeta_0)$, $\lambda:={1/f'(\infty)>1}$, for all~${n\in\Natural}$ and a suitable ${\zeta_0\in\UH}$.

    \StepP{\ref{IT_THmain3:1}}
			Since $F$ has a semiregular fixed point at $\infty$, Lemma~\ref{LM_isogonality}\,\ref{IT_LM_isogonality(A)} implies that there exists some $n_0 \in \mathbb{N}_0$ large enough such that ${F^{\prime}(\lambda^n \zeta_0 )\neq 0}$ for all ${n \geq n_0}$. Substituting  ${\zeta:=\lambda^{n_0+m} \zeta_0}$ in~\eqref{EQ_premodel-der}, we have that ${f'(w_{n_0+m})}$ and ${F'(\lambda^{n_0+m} \zeta_0)}$ are both different from zero for all~$m\geq 1$.

			\StepP{\ref{IT_THmain3:2}}	Combining~\eqref{EQ_premodel-eq-in-the-proof} with \eqref{EQ_premodel-der}, we obtain
			\begin{equation}\label{EQ_hyperbolic-derivative-of-the-iterates}
				\dhyp f^{\circ{k}}\big(F(\zeta)\big) \dhyp F(\zeta)\,=\,\dhyp F(\lambda^{-k}\zeta) \, , \quad \zeta\in\UH \, , \, k\in\Natural\,.
			\end{equation}

   As we have concluded in the proof of~\ref{IT_THmain3:1}, ${F'(\lambda^{n_0+m} \zeta_0)\neq0}$ and, as a consequence, also ${\dhyp F(\lambda^{n_0+m} \zeta_0)\neq0}$ for all ${m\in\Natural}$.
			Substituting ${k:=m}$ and ${\zeta:=\lambda^{n_0+m} \zeta_0}$ in~\eqref{EQ_hyperbolic-derivative-of-the-iterates}, we therefore obtain
			\begin{equation}\label{EQ_f<->F}
				\dhyp f^{\circ m}(w_{n_0+m})=\frac{\dhyp F(\lambda^{n_0}\zeta_0)}{\dhyp F(\lambda^{n_0+m} \zeta_0)}.
			\end{equation}
    Theorem~\ref{TH-main1} implies that $\dhyp F(\zeta) \to 1$ as $\zeta\to \infty$ non-tangentially. In particular,
				\begin{equation}\label{EQ_Dh-to-1-along-an-orbit}
					\dhyp F(\lambda^{n_0+m} \zeta_0)\to 1 \quad \text{as~$~{m\to+\infty}$}.
				\end{equation}
				By~\eqref{EQ_f<->F} and~\eqref{EQ_Dh-to-1-along-an-orbit}, the limit $\mu:=\lim_{m\to+\infty}f^{\circ m}(w_{n_0+m})$ exists and equals~$\dhyp F(\lambda^{n_0}\zeta_0)$. By the very definition of~$\dhyp F$, we have ${\dhyp F(\lambda^{n_0}\zeta_0)\ge0}$. Moreover, by the choice of~$n_0$ made in the proof of~\ref{IT_THmain3:1},  ${\dhyp F(\lambda^{n_0}\zeta_0)\neq0}$. In addition, by the invariant Schwarz\,--\,Pick Lemma, ${\dhyp F(\lambda^{n_0}\zeta_0)\le1}$. Therefore, ${\mu\in(0,1]}$ as desired.

			\StepP{\ref{IT_THmain3:3}}
			Using~\eqref{EQ_f<->F}, we have
			\begin{equation}\label{EQ_two-series}
				\sum_{m=1}^{+\infty}\big(\dhyp f^{\circ m}(w_{n_0+m})-\mu\big)~=~
				\mu\,\sum_{m=1}^{+\infty}\Big(\,\frac1{\dhyp F(\lambda^{n_0+m} \zeta_0)}-1\Big).
			\end{equation}
In view of~\eqref{EQ_Dh-to-1-along-an-orbit}, convergence of the series in the r.h.s. is equivalent to~\eqref{EQ_main-cond-discrete_form}. By Proposition~\ref{PR_cont<=>discrete}, the latter is in turn equivalent to~\eqref{EQ_int-conv-cond-weak}. Finally, recalling that $F$ has a semiregular fixed point at~$\infty$, we have that according to Proposition~\ref{PR_weak} and Lemma~\ref{LM_elementary-implication}, the boundary fixed point of~$F$ at~$\infty$ is regular if and only if~\eqref{EQ_int-conv-cond-weak} holds. Thus, the pre-model for~$f$ at~$\infty$ is regular if and only if the series in the l.h.s. of~\eqref{EQ_two-series} converges. 
		\end{proof}

  		\begin{proof}[\proofof{Corollary~\ref{CR_fromTH-main3}}]
			Let $(w_n)$ be a regular backward orbit converging to the repulsive fixed point at~$\infty$. We assume without loss of generality that ${n_0=0}$ in~\ref{IT_THmain3:1} and thus, $f'(\zeta_n)\neq0$ for all~${n\in\Natural}$. Then also ${\dhyp f^{\circ n}(w_n)\neq0}$ for all~${n\in\Natural}$.

			According to Theorem~\ref{TH-main3}, the pre-model of~$f$ at~$\infty$ is regular if and only if
			\begin{equation}\label{EQ_discrete-cond-copy}
				\sum_{n=1}^{+\infty}\big(\dhyp f^{\circ n}(w_n)-\mu\big)~<~+\,\infty,\quad \mu:=\lim_{n\to+\infty}\dhyp f^{\circ n}(w_n)\,\in\,(0,1].
			\end{equation}
			Therefore, it suffices to show that convergence of the series~\eqref{EQ_discrete-cond-copy} is equivalent to the convergence of the following series
			\begin{equation}\label{EQ_series-in-the-corollary}
				\sum_{n=1}^{+\infty}\big(\rho-\hypdist_\UH(w_n,w_{n+1})\big),
			\end{equation}
			where $\rho:=\lim_{n\to+\infty}\hypdist_\UH(w_n,w_{n+1})$. This limit exists by Remark~\ref{RM_step}, and it is finite because the backward orbit is regular by the hypothesis. Again by Remark~\ref{RM_step}, we have
			${\hypdist_\UH(w_n,w_{n+1})\ge\hypdist_\UH(w_1,w_2)>0}$, where the second inequality sign holds, since ${w_2\neq f(w_2)=w_1}$. It then follows from Proposition~\ref{prop:Ineq_ApprUniv} that there exist two constants ${0<C_1<C_2<+\infty}$ such that for any ${n,m\in\Natural}$ with ${m>n}$, we have
			\begin{equation}\label{EQ_through-dhyp}
				C_1\big(1-\dhyp f^{\circ(m-n)}(w_m)\big)
				\le \hypdist_\UH(w_{m},w_{m+1})-\hypdist_\UH(w_{n},w_{n+1}) \le
				C_2\big(1-\dhyp f^{\circ(m-n)}(w_m)\big).
			\end{equation}
			Note that $$\dhyp f^{\circ(m-n)}(w_m)=\frac{\dhyp f^{\circ m}(w_m)}{\dhyp f^{\circ n}(w_n)}~\to~\frac{\mu}{\dhyp f^{\circ n}(w_n)}\quad\text{as~$~m\to+\infty$}.$$

			Therefore, passing in~\eqref{EQ_through-dhyp} to the limit as ${m\to+\infty}$, we conclude that convergence of the series~\eqref{EQ_series-in-the-corollary} is equivalent to that of $\sum_{n=1}^{+\infty}\big(1-\mu/\mu_n\big)$, where $\mu_n:=\dhyp f^{\circ n}(w_n)$. As desired, the latter is in turn equivalent to~\eqref{EQ_discrete-cond-copy}, because ${\mu_n\to\mu}$ as ${n\to+\infty}$.

            It remains to mention that the relation
				$$
				\rho=2\,\arctanh\frac{1-f'(\infty)}{\big|1+e^{2i\theta}f'(\infty)\big|},\quad \theta:={\lim_{n\to+\infty}\arg w_n},
				$$
				is known, see e.g. \cite[Corollary~4.9.13]{Marco}. Alternatively, it follows from the half-plane version of \ref{IT_TH-main1:semireg-cntct-pnt}$\,\Rightarrow\,$\ref{IT_Th-main1:hdiffquo-to-1} in Theorem~\ref{TH-main1} applied to~$F$.
		\end{proof}

		\section{Concluding remarks} \label{SEC:ConcludingRemarks}

		\addtocontents{toc}{\SkipTocEntry}\subsection{A word about terminology}\mbox{~}
		\label{SUB:terminology}
			The two notions of boundary conformality for holomorphic self-maps, which we consider in this paper, have counterparts for conformal mappings of~$\UD$. Unfortunately, there is no common consent in the literature regarding terminology. In the terminology introduced by Pommerenke in~\cite{Pommerenke:BB}, which we prefer to follow,  ${f:\UD\to\C}$ is said to be \emph{conformal} at~$\sigma\in\UC$ if the angular limit $f(\sigma):=\anglim_{z\to\sigma} f(z)$ exists finitely and $f$ has at~$\sigma$ finite non-vanishing angular derivative $f'(\sigma):={\anglim_{z\to\sigma}\big(f(z)-f(\sigma)\big)/(z-\sigma)}$. In the earlier monograph~\cite{Pombook75} by the same author, as well as in some other sources, e.g. \cite[\S{}V.5]{GarnettMarshall2005}, the term ``conformality'' is used for another, a priori and a posteriori weaker property which, instead of asking for finite non-vanishing angular derivative, is analytically described by the existence of the finite angular limit of ${\arg\big(f(z)-f(\sigma)\big)/(z-\sigma)}$. Geometrically, this is equivalent to a certain angle preservation property. By this reason, following~\cite{Pommerenke:BB}, we refer to this weaker property as \emph{isogonality}. Another term, which appears often in the literature (e.g. \cite{BCD-Book,JO1977}) in the same meaning as isogonality, is \emph{semi-conformality}.

			Clearly, both conformality and isogonality can be extended to the case of infinite angular limit ${f(\sigma)=\infty}$ by replacing $f$ with~$1/f$. Similarly, these properties can be reformulated in the ``strip setting'', i.e. for conformal mappings of an infinite strip at one of its boundary points at~$\infty$; see e.g. \cite{BK2022,RodinWarsch,JO1977}.
			Moreover, Yamashita~\cite{Yamashita} studied isogonality for \textit{non-injective} holomorphic functions in~$\UD$, using the term ``conformality'' instead. In the same paper, the Visser\,--\,Ostrowski condition (see e.g. \cite[\S11.3]{Pommerenke:BB}) is also investigated and referred to as ``semi-conformality''. Note that in general, the latter condition is much weaker than isogonality, but they become equivalent if $f$ is a holomorphic self-map and the value of the angular limit at the boundary point we consider belongs again to the boundary; this follows, for example, from our Lemma~\ref{LM_isogonality}.

			By the very definition (see Section~\ref{SEC:MainResults}) for a holomorphic self-map~$\varphi\in\Hol(\UD,\UD)$, the conformality at~$\sigma\in\UC$ in the weak (resp., strong) sense is equivalent to isogonality (resp., conformality) at~$\sigma$, combined with the requirement that $\varphi(\sigma)\in\UC$.

		\addtocontents{toc}{\SkipTocEntry}
		\subsection{Theorem \ref{TH-main1}, Theorem \ref{TH-main2} and  boundary Schwarz lemmas of Burns-Krantz--type}
		As noted in the Introduction, Theorems \ref{TH-main1} and \ref{TH-main2} can be seen as local rigidity theorems in the spirit of the boundary Schwarz lemma of Burns and Krantz \cite{BurnsKrantz1994}. Indeed, for the special case of the unit disk the Burns-Krantz Schwarz lemma asserts that if $\varphi \in \Hol(\UD,\UD)$ and $\sigma \in \partial \D$, then
		\def\bliz{\hspace{-.5em}}
		\begin{align} \label{EQ:BK}
			\varphi(z)&=z+o\big(|1-z|^3|\big) \quad \text{as~}~z \to \sigma~\text{~non-tangentially }    & \bliz\Longleftrightarrow\quad  &\varphi=\text{id}  \, .\\
			\intertext{Recently, the following  invariant version  of the Burns-Krantz lemma has been established in \cite[Theorem 2.1]{FDO}:}
			\label{EQ:FDO} \dhyp \varphi(z)&=1+o\big(|1-z|^2\big)  \quad \text{as~}~z \to \sigma~\text{~non-tangentially }    & \bliz\Longleftrightarrow\quad    &\varphi \in \Aut(\UD) \, .\\
			\intertext{In fact, it is not difficult to show (see \cite[Remark 2.2]{FDO}) that \eqref{EQ:FDO} implies \eqref{EQ:BK}. It is a natural question to wonder which properties of $\varphi \in \Aut(\UD)$ continue to hold if the condition on the left-hand side of \eqref{EQ:FDO} is weakened. In this direction, Kraus, Ruscheweyh and the last named author have proved in \cite{KRR2007} that the condition}
			\label{EQ:DSO} \dhyp \varphi(z)& =1 +o\big(1 \big)   \quad \text{as~}~z \to \sigma~\text{~unrestrictedly }\\
\intertext{ is equivalent to the property that $\varphi$ can be analytically continued by Schwarz reflection across an open subarc $J\subseteq \partial \D$ containing $\sigma$.
Theorem~\ref{TH-main1} yields the additional information that}
			\label{EQ:PMAO} \dhyp \varphi(z)&=1+o\big(1\big)     \quad\text{as~}~z \to \sigma~\text{~non-tangentially }    \mathrlap{\quad \Longleftrightarrow \quad  \varphi \text{ is weakly conformal at } \sigma.} &   &\hspace{8em}\mbox{~}
		\end{align}
Furthermore, Theorem~\ref{TH-main2} tells that the residual term in~\eqref{EQ:PMAO} is integrable along the radial segment ${[0,\sigma)}$ w.r.t. hyperbolic arc length if and only if $\varphi$ is \emph{strongly} conformal at~$\sigma$.

		%*************************************************************************
		\addtocontents{toc}{\SkipTocEntry}
		\subsection{Theorem \ref{TH-main2} and the angular derivative problem for conformal maps}
		\label{SUB:AngDerivativeConformalMaps}
		The notion of conformality at the boundary has mostly been considered for univalent mappings, and often in connection with the famous \emph{angular derivative problem}. This problem  goes back to the thesis of Ahlfors~\cite{Ahlfors1930} and
		consists in finding necessary and sufficient geometric conditions on a simply connected subdomain $G$ of $\C$ near a boundary point ${\omega\in \partial G}$, so that one --- and hence any --- univalent map~$\varphi$ from $\UD$ onto~$G$ admits, at the point ${\sigma\in\UC}$ corresponding\footnote{To be precise, $\partial G$ should be understood in this case as the Carath\'eodory boundary of the domain~$G$ rather than its topological boundary, and $\omega$ should be a prime end containing an accessible point. The latter guarantees the existence of the angular limit $\varphi(\sigma):=\anglim_{z\to\sigma}\varphi(\sigma)$; see, e.g., \cite[\S9.2 and \S10.2]{Pombook75} for more detail.} under~$\varphi$ to~$\omega$, finite non-vanishing angular derivative $\varphi'(\sigma):={\anglim_{z\to\sigma}\big(f(z)-f(\sigma)\big)/(z-\sigma)}$.
This problem has attracted much interest, and continues to do so. Seminal contributions have been given by Rodin and Warschawski \cite{RW-FinnishJ, RodinWarsch}, Jenkins and Oikawa \cite{JO1977}, followed by many others, e.g. Burdzy~\cite{Burdzy1986}, Carroll~\cite{Carroll1988}, as well as very recently by Betsakos and Karamanlis~\cite{BK2022}.

It is worth mentioning that \cite[Theorem~1]{BK2022}, being suitably translated to the unit disk setting and combined with some standard arguments, characterizes existence of a finite non-vanishing angular derivative $\varphi'(\sigma)$ for a properly normalized conformal mapping~$\varphi:D\xrightarrow{\text{onto}}\UD$, where $D$ is a simply connected domain sharing with the unit disk the boundary point~$\sigma$ and possessing at~$\sigma$ a sort of one-sided isogonality property\footnote{The exact meaning of ``one-sided isogonality'' is that for any Stolz angle $S$ with vertex at~$\sigma$ there is ${\varepsilon>0}$ such that ${\{z\in S\colon |z-\sigma|<\varepsilon\}\subset D}$. We refer to this condition as ``one-sided'' because it is \emph{not} supposed that ${D\subset\UD}$.}.
			This interpretation of \cite[Theorem~1]{BK2022} yields literally the same necessary and sufficient condition as our condition~\ref{IT_TH-main2:BetsakosKaramanlis-radial} in the Addendum~\ref{ADD:TH-main2Condition(d)}. However, it is clear that none of the results follows from the other, as the assumptions on the map~$\varphi$ are rather different.

		%*************************************************************************

		\addtocontents{toc}{\SkipTocEntry}
		\subsection{Angular derivative vs.~non-tangential limit of hyperbolic distortion}
		\label{SUB:AngDerVSHypDist}
Condition \eqref{EQ_anglim-abs-hder=1}, i.e.,
\begin{equation}\label{EQ_anglim-abs-hder=2}
				\anglim_{z\to\sigma} \dhyp \varphi(z)~=~1\, ,
\end{equation}
which we identify in Theorem~\ref{TH-main1} as being equivalent  to the conformality of $\varphi \in \Hol(\UD,\UD)$ at~$\sigma$  \emph{in the weak sense}, has recurrently been considered in the literature in   connection with existence of the angular derivative.

		For instance, Beardon and Minda \cite[Theorem 7 $+$ Theorem 8]{BM2023} very recently proved that \eqref{EQ_anglim-abs-hder=2} as well as condition~\ref{IT_Th-main1:hdiffquo-to-1} in Theorem~\ref{TH-main1} hold provided that $\varphi$ is conformal at ${\sigma \in \partial \D}$ \emph{in the strong sense}. This was also observed before in \cite[Lemma~2.9\,(b)]{Kraus2013}. We also note that Lemma~2.9\,(a) in~\cite{Kraus2013} asserts that if $\angle \lim_{z \to \sigma} \dhyp \varphi(z)=1$ holds for \textbf{every} ${\sigma \in \partial \D}$, then $\varphi$ is conformal in the strong sense at a.e.~point of~$\partial \D$. This result easily follows from Theorem~\ref{TH-main1} above together with~\cite[Theorem~3]{Yamashita}, which says that if $\varphi$ is isogonal everywhere on~$\partial \D$, then~$\varphi$ has a finite angular derivative at a.e.~$\sigma~\in~\partial \D$. Another relation (of partially quantitative nature) between strong conformality  a.e. on~$\UC$ and boundary behaviour of~$\dhyp\varphi$ is contained in the main result of the recent preprint~\cite{IvriiNicolau}. 

		It has been known for a while that condition \eqref{EQ_anglim-abs-hder=2}
		in general does not imply that $\varphi$ has a finite angular derivative at~$\sigma$.
		An example is furnished  in \cite{Zorboska2015}. Theorem \ref{TH-main1} shows that the explicit function $\varphi \in \Hol(\UD,\UD)$ described in \cite{Zorboska2015} is conformal at $\sigma$ in the weak sense.

		\addtocontents{toc}{\SkipTocEntry}
		\subsection{Metric aspects of Theorem \ref{TH-main1} and Theorem \ref{TH-main2}} \label{SUB:MetricAspects}
                    The conditions (a) and (b) in Theorem~\ref{TH-main1} as well as in Theorem~\ref{TH-main2} can be expressed purely in terms of the hyperbolic distance $\hypdist_\D$ and the induced hyperbolic distortion
                    $$ \dhyp \varphi(z):=\lim \limits_{w \to z} \frac{\hypdist_\D\big(\varphi(z),\varphi(w)\big)}{\hypdist_\D\big(z,w\big)} \, .$$
                    It is  a natural question to wonder to what extend our results generalize to the case of  non-expansive self-maps $\Phi :X \to X$ of a metric space $(X,\hypdist_X)$. In this more general setting, the analogue of the hyperbolic distortion seems to be the lower distortion
                    $$ \liminf \limits_{y \to x} \frac{\hypdist_X\big(\Phi(x),\Phi(y)\big)}{\hypdist_X \big(x,y\big)} \, . $$
                   In this regard, it is also worth recalling that the strong conformality condition~\ref{IT_TH-main2:regular-cntct-pnt} in Theorem~\ref{TH-main2} is equivalent to the purely metric condition~\eqref{EQ_Julia-metric}, which can be used as its substitute when the requirement of holomorphicity is relaxed. 

 For recent works extending the circle of ideas around  hyperbolic distortion, conformality at the boundary in the strong sense~--- and their relation with (forward and backward) dynamics of the iterates~--- to non-expansive self-maps of the unit ball in $\C^n$ or more general metric spaces,  we refer to e.g.~\cite{AFGG2024, AFGK2024, Beardon:dynamics, BM2023, BGZ2021, HW2021, Karlsson2001, LN2008} and references therein.

                \section*{Acknowledgements}
                The authors wish to thank O.~Ivrii for sending a preprint \cite{Ivr} of his joint work with M.~Urba\'{n}ski. They also thank A.~Nicolau for useful discussions and sending a preprint \cite{IvriiNicolau} of his recent work with O.~Ivrii which deals with the related problem of studying  holomorphic self-maps which  almost preserve hyperbolic area.  The first author is grateful to L.~Arosio for fruitful discussions related to the applications to backward dynamics.

		\renewcommand\appendixname{}
		\appendix
		\section{Appendix}

Here we present some auxiliary results related to conformality in the weak sense that were used in the proofs of previous sections. We again work in the half-plane setting, see Section~\ref{SUB:half-plane}.
Recall the notation $~A_{\theta}:=\big\{\zeta\colon|\arg\zeta|<\theta\}\subset\UH~$ and
			$$
        A_{\theta,R}:=\big\{\zeta\colon|\arg\zeta|<\theta,~|\zeta|>R\big\}\subset\UH,\qquad R>0,~\theta\in(0,\pi/2).
			$$

\begin{lemma}\phantomsection\label{LM_isogonality}
			\begin{romlist}
      \item[]
      \item\label{IT_LM_isogonality(A)} Suppose $F\in\Hol(\UH,\UH)$ has a boundary fixed point at~$\infty$. Then the following two conditions are equivalent:
		\begin{arablist}
			\item\label{IT_LM_isogonality-deri}
		     For any $\theta\in(0,\pi/2)$ there exists $R>0$ such that $F'$ does not vanish in $A_{\theta,R}$ and
					$$
					\frac{F'(\zeta)}{|F'(\zeta)|}\to1\quad\text{as $~A_{\theta,R}\ni \zeta\to\infty$.}
					$$
			\item\label{IT_LM_isogonality-isog} there exists the angular limit
					$$
					\anglim_{\zeta\to\infty}\arg\frac{F(\zeta)}{\zeta}=0.
					$$
		\end{arablist}
        Moreover, if the above equivalent conditions hold, then
				\begin{equation}\label{EQ_LM_isogonality}
					\anglim_{\zeta\to\infty}\frac{\zeta F'(\zeta)}{F(\zeta)}=1.
				\end{equation}

		\item\label{IT_LM_isogonality(B)}
       Conversely, if a self-map $F\in\Hol(\UH,\UH)$ satisfies~\eqref{EQ_LM_isogonality}, then $F$ has a boundary fixed point at~$\infty$ and the above conditions~\ref{IT_LM_isogonality-deri} and~\ref{IT_LM_isogonality-isog} hold.
			\end{romlist}
\end{lemma}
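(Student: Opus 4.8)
The plan is to work throughout in the half-plane and exploit the two standard facts about holomorphic self-maps of $\UH$ fixing $\infty$: the angular derivative $F'(\infty)=\anglim_{\zeta\to\infty}F(\zeta)/\zeta$ always exists and lies in $[0,+\infty)$, and the Julia--Wolff--Carathéodory machinery tells us that if this limit is positive (strong conformality/regular fixed point) then $\anglim F'(\zeta)=F'(\infty)$ as well. However, the delicate point here is that we do \emph{not} assume strong conformality: $F'(\infty)$ may be $0$, and $F'$ may even vanish inside every Stolz region a priori. So the implication \ref{IT_LM_isogonality-isog}$\Rightarrow$\ref{IT_LM_isogonality-deri} cannot be obtained by brute differentiation of an asymptotic expansion of $F$; it must come from a normal-families/rescaling argument.

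For \ref{IT_LM_isogonality-deri}$\Rightarrow$\ref{IT_LM_isogonality-isog} and the formula \eqref{EQ_LM_isogonality}, I would argue as follows. Fix a Stolz region $A_\theta$; on $A_{\theta,R}$ we may write $F'(\zeta)=|F'(\zeta)|e^{i\epsilon(\zeta)}$ with $\epsilon(\zeta)\to0$. For $\zeta\in A_{\theta,R}$ with $\zeta$ real-scaled, integrate $F'$ along a path from a fixed large real point $x_0$ to $\zeta$ staying in $A_{\theta,R}$; since $\arg F'\to 0$, the increment $F(\zeta)-F(x_0)=\int F'$ has argument tending to that of $\zeta-x_0$, which forces $\arg\big(F(\zeta)/\zeta\big)\to 0$ first along the real axis and then, using that $F(\zeta)\to\infty$ nontangentially (Lindelöf, as in \cite[Theorem 1.5.7]{BCD-Book}), in the full angular sense. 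For \eqref{EQ_LM_isogonality} I would pass to $G(\zeta):=\log F(\zeta)-\log\zeta$, a holomorphic function on $A_{\theta,R}$ (taking the branch determined by $\arg(F/\zeta)\to 0$) which tends to a finite or infinite real boundary value of $\log F(\infty)$-type; the point is that $\zeta G'(\zeta)=\zeta F'(\zeta)/F(\zeta)-1$, and a Schwarz--Pick/normal-families estimate on the self-map $\zeta\mapsto F(\zeta)/$(linear) shows $\zeta G'(\zeta)\to0$ nontangentially. This is essentially the same rescaling trick used in Step~3 of the proof of Theorem~\ref{TH-main1}.

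For the converse direction \ref{IT_LM_isogonality-isog}$\Rightarrow$\ref{IT_LM_isogonality-deri} — which I expect to be the main obstacle — and for part \ref{IT_LM_isogonality(B)}, the natural tool is the family of rescalings $G_x(\zeta):=\big(F(x\zeta)-i\Im F(x)\big)/\Re F(x)$ already introduced in Section~\ref{SEC:ProofTH1}. Each $G_x\in\Hol(\UH,\UH)$ fixes $1$. Isogonality at $\infty$, combined with the behaviour $\Re F(x)\to+\infty$ (which one gets from $\anglim\arg(F/\zeta)=0$ together with $F(\infty)=\infty$), should force every locally uniform subsequential limit of $(G_x)_{x\to\infty}$ to be a self-map of $\UH$ fixing $1$ whose derivative at $1$ is a positive real — and then a hyperbolic-distortion argument (or directly the known fact that such limits of the Julia-quotient-normalized family are $\id_\UH$) pins the limit down to the identity, so $G_x\to\id_\UH$ locally uniformly. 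The hard part is to upgrade this \emph{locally uniform} convergence of $G_x$ to the \emph{nontangential} statement $F'(\zeta)/|F'(\zeta)|\to1$ with nonvanishing of $F'$ on Stolz regions: for this I would invoke Lemma~\ref{lem:DiscreteHypDisksToAngularLimit} (applied along a lacunary sequence $x_n=2^n$ and then interpolating, exactly as in Step~2 of the proof of Theorem~\ref{TH-main1}) to the function $H:=F'/|F'|$, whose locally uniform limit $1$ on hyperbolic disks around the $x_n$ yields the angular limit; the nonvanishing of $F'$ near $\infty$ in each Stolz region follows because $G_x\to\id$ uniformly on a hyperbolic disk about $1$ forces $G_x'$, hence $F'(x\cdot)$, to be zero-free there for $x$ large. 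Finally, part \ref{IT_LM_isogonality(B)} is essentially bookkeeping: if \eqref{EQ_LM_isogonality} holds, then in particular $\Re\big(\zeta F'(\zeta)/F(\zeta)\big)\to 1>0$ nontangentially, so along the real axis $(\log\Re F)'(x)\sim 1/x$ gives $\Re F(x)\to+\infty$ and hence $F(x)\to\infty$; Lindelöf then gives the boundary fixed point at $\infty$, and integrating $\zeta F'/F\to 1$ as in the first direction recovers \ref{IT_LM_isogonality-isog}, which by \ref{IT_LM_isogonality(A)} is equivalent to \ref{IT_LM_isogonality-deri}.
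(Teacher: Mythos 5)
Your plan follows the paper's route for \ref{IT_LM_isogonality-deri}$\Rightarrow$\ref{IT_LM_isogonality-isog} (integrate $F'$ and use convexity of the target sector, then bootstrap via Lindel\"of) and for part~\ref{IT_LM_isogonality(B)} (apart from a small slip: the relevant quantity is $(\log|F|)'(x)=\Re(F'(x)/F(x))$, not $(\log\Re F)'(x)$). For the remaining directions, however, you take a genuinely different route from the paper, and this is where there is a real gap. The paper passes to the strip model $F(e^w)=\exp f(w)$, sets $g=f-\id$, observes that isogonality says exactly $\anglim\Im g=0$, and then obtains $\anglim g'(w)=0$ from the Schwarz--Pick lemma plus the standard density-to-boundary-distance estimate (because $g$ maps a half-strip into a very thin horizontal strip). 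This at once gives~\eqref{EQ_LM_isogonality}, and \ref{IT_LM_isogonality-deri} then drops out by combining~\eqref{EQ_LM_isogonality} with~\ref{IT_LM_isogonality-isog}. Your $G(\zeta)=\log F(\zeta)-\log\zeta$ is literally the strip quantity $g$ in the variable $\zeta=e^w$, so that part can be made to coincide with the paper, but your ``Schwarz--Pick/normal-families estimate on the self-map $\zeta\mapsto F(\zeta)/(\text{linear})$'' as stated does not identify the correct pair of domains and leaves the decisive estimate unjustified.

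The genuine gap is in your rescaling argument for \ref{IT_LM_isogonality-isog}$\Rightarrow$\ref{IT_LM_isogonality-deri}. You want to pin down every locally uniform subsequential limit $G_*$ of $G_x(\zeta)=(F(x\zeta)-i\Im F(x))/\Re F(x)$ as $\id_\UH$, but the two reasons you offer do not work. First, ``whose derivative at $1$ is a positive real'' is not something isogonality gives you: $G_x'(1)=xF'(x)/\Re F(x)$ has argument $\arg F'(x)$, so asserting its limit is a positive real is essentially the conclusion \ref{IT_LM_isogonality-deri} you are trying to prove. Second, even if one knew $G_*'(1)$ were a positive real $\mu$, the ``hyperbolic-distortion argument'' only forces $G_*\in\Aut(\UH)$ when $\mu=1$; and the ``known fact that Julia-quotient-normalized limits are $\id$'' holds only under the finite-angular-derivative hypothesis, i.e.\ under strong conformality, which is precisely what is not assumed here (and, if you imported it via Theorem~\ref{TH-main1}, the argument would be circular, since Lemma~\ref{LM_isogonality} is used in the proof of that theorem). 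The rescaling approach can in fact be rescued, but by an argument you do not state: isogonality forces $\arg G_*(\zeta)=\arg\zeta$ on a Stolz sector, so $G_*(\zeta)/\zeta$ is a holomorphic function that is real (and positive) on an open set, hence constant, and $G_*(1)=1$ then gives $G_*=\id$. Without some such step your plan does not close; as written it assumes the key derivative control it is supposed to establish.
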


A couple of observations are worth being made before we prove the above lemma.
			\begin{remark}\label{RM_without-limit-values}
				It is easy to see that for any $F\in\Hol(\UH,\UH)$, condition~\ref{IT_LM_isogonality-isog} in the above lemma is equivalent to an \textit{a priori} weaker condition that $\alpha:=\anglim_{\zeta\to\infty}\arg\big(F(\zeta)/\zeta\big)$ exists but does not necessarily equal~$0$. Indeed, if this limit exists then ${\alpha=0}$, because otherwise setting $\beta:={\mathrm{sgn}(\alpha)\big(\pi-|\alpha|\big)/2}$, we would get $\big|\arg F(Re^{i\beta})\big|\to\big(\pi+|\alpha|\big)/2$ as ${R\to+\infty}$, which would imply ${F(\UH)\not\subset\UH}$.

				Similarly, under the hypothesis of Lemma~\ref{LM_isogonality}, condition~\ref{IT_LM_isogonality-deri}  is equivalent to the mere existence of ${\kappa:=\anglim_{\zeta\to\infty}F'(\zeta)/|F'(\zeta)|}$. Indeed, as the proof of Lemma~\ref{LM_isogonality} given below shows, if the latter limit exists, then $\alpha:=\anglim_{\zeta\to\infty}\arg\big(F(\zeta)/\zeta\big)$ exists as well and ${\kappa=e^{i\alpha}}$. As we have just seen, ${\alpha=0}$ because ${F(\UH)\subset\UH}$. Consequently, we necessarily have ${\kappa=1}$.
			\end{remark}

			\begin{remark}\label{RM_Yamashita}
				Taking into account the previous remark and the correspondence between holomorphic self-maps of $\UD$ and those of~$\UH$, see Section~\ref{SUB:half-plane}, it is not difficult to see that the equivalence between conditions \ref{IT_LM_isogonality-deri} and~\ref{IT_LM_isogonality-isog} in Lemma~\ref{LM_isogonality} implies the equivalence of condition~\ref{IT_TH-main1:semireg-cntct-pnt} in Theorem~\ref{TH-main1} to condition~\ref{IT_FEQ_b} in Addendum~\ref{ADD:TH-main1}. Alternatively, this can be deduced from a result of Yamashita~\cite[Theorem~2]{Yamashita}. We however prefer to give a simpler and direct proof, which in addition allows us to show that \ref{IT_LM_isogonality-deri} and~\ref{IT_LM_isogonality-isog} are further equivalent, under the hypothesis of Lemma~\ref{LM_isogonality}\,\ref{IT_LM_isogonality(A)}, to the half-plane version~\eqref{EQ_LM_isogonality} of the Visser\,--\,Ostrowski condition~\eqref{EQ:VisserOstrowski}.  The latter fact is used in our proof of Theorem~\ref{TH-main1}, and it does not hold in the general case considered by Yamashita as mentioned e.g. in \cite[pp.\,251-252]{Pommerenke:BB}.
			\end{remark}

			In view of condition~\ref{IT_LM_isogonality-deri} in Lemma~\ref{LM_isogonality}, it is natural to introduce angular limits at~$\infty$ for functions defined on subdomains of~$\UH$, in general, different from~$\UH$.
			\begin{definition}\label{DF_domain-isogonality}
				A  domain $U\subset \UH$ is said to be \textit{isogonal} at~$\infty$, if for every~${\theta\in(0,\pi/2)}$ there exists ${R=R(\theta)>0}$ such that ${A_{\theta,R}\subset U}$.
			\end{definition}
			\begin{definition}\label{DF_anglim-in-iso-domain}
				Let $f$ be a complex-valued function defined in a domain~$U\subset\UH$. We will say that $f$ has \emph{angular limit~${\ell\in\overline{\C}}$ at~$\infty$} and write ${\anglim_{\zeta\to\infty}f(\zeta)=\ell}$, if~$U$~is isogonal at~$\infty$ and
				$$
				f(\zeta)\to\ell\quad\text{as~$~U\cap A_\theta\ni\zeta\to\infty$,}
				$$
				for any $\theta\in(0,\pi/2).$
			\end{definition}
\begin{proof}[\proofof{Lemma~\ref{LM_isogonality}\,\ref{IT_LM_isogonality(A)}}]\mbox{~}

\StepP{\ref{IT_LM_isogonality-deri}$\Longrightarrow$\ref{IT_LM_isogonality-isog}}
			Fix some $\theta\in(0,\pi/2)$ and $\varepsilon\in(0,\pi)$. Then by~\ref{IT_LM_isogonality-deri}, there exists ${R>0}$ such that $F'(\zeta)\in A_{\varepsilon/2}$ for all ${\zeta\in A_{\theta,R}}$. Since for any ${\zeta\in A_{\theta,R}}$ the Euclidean segment joining $R\zeta/|\zeta|$ to~$\zeta$ lies (except for one of the end-points) in $A_{\theta,R}$ and since $A_{\varepsilon/2}$ is convex, it follows that
			$$
			Q_R(\zeta):=\frac{F(\zeta)-F(R\zeta/|\zeta|)}{\zeta}=\frac{|\zeta|-R}{|\zeta|}\int_{0}^{1} F'\big(\omega(t)\big)\di t~\in~A_{\varepsilon/2},\quad  \omega(t):=\frac{R+(|\zeta|-R)t}{|\zeta|}\zeta.
			$$
As a consequence,
			$$
			\left|\arg \frac{F(\zeta)}{\zeta}\right|~\le~\big|\arg Q_R(\zeta)\big|\,+\,\left|\arg\left(1-\frac{F(R\zeta/|\zeta|)}{F(\zeta)}\right)\right|
			~<~\frac{\varepsilon}2\,+\,\arcsin\frac{M_R}{|F(\zeta)|},
			$$
			where $M_R:=\max_{|t|\le\theta}|F(Re^{it})|$.

			Since by the hypothesis, $\anglim_{\zeta\to\infty}F(\zeta)=\infty$, it follows that there exists ${R_1>0}$ such that ${|F(\zeta)|>M_R/\sin(\varepsilon/2)}$  for all ${\zeta\in A_{\theta,R_1}}$. Further, it follows that for all $\zeta\in A_{\theta,\rho}$, where ${\rho:=\max\{R,R_1\}}$, we have ${\big|\arg(F(\zeta)/\zeta)\big|<\varepsilon}$.
			This proves~\ref{IT_LM_isogonality-isog} since $\theta\in(0,\pi/2)$ and $\varepsilon\in(0,\pi)$ are arbitrarily chosen.

\StepP{\ref{IT_LM_isogonality-isog}$\Longrightarrow$\ref{IT_LM_isogonality-deri} and \eqref{EQ_LM_isogonality}} Using the conformal map ${w\mapsto e^w}$ of ${\mathbb S:=\{w \in \C :|\Im w|<\pi/2\}}$ onto~$\UH$, we obtain ${f\in\Hol(\mathbb S,\mathbb S)}$ such that $F(e^w)=\exp f(w)$ for all ${w\in\mathbb S}$. Denote $g(w):={f(w)-w}$, ${w\in\mathbb S}$.
			Condition~\ref{IT_LM_isogonality-isog} implies that
			\begin{equation}\label{EQ_Im-g-0}
				\anglim_{w\to+\infty} \Im g(w)=0.
			\end{equation}
			Here, the angular limit of $g:\mathbb S\to\C$ as ${w\to+\infty}$ is defined as ${\ell\in\overline\C}$ such that for any ${\delta\in(0,\pi/2)}$,
			$$
			g(w)\to\ell\quad\text{~as}\quad\mathbb S^+_\delta\ni w\to\infty, \qquad \text{where~
				$~\mathbb S_\delta^+:=\{w:\Re w>0,~|\Im w|<\pi/2-\delta\}$.}
			$$

			Condition~\eqref{EQ_LM_isogonality} can be written as $\anglim_{w\to+\infty} f'(w)=1$. Moreover, note that \eqref{EQ_LM_isogonality} combined with~\ref{IT_LM_isogonality-isog} would immediately imply~\ref{IT_LM_isogonality-deri}.
			Therefore, to prove~\ref{IT_LM_isogonality-deri} and~\eqref{EQ_LM_isogonality}, we have to show that
			\begin{equation}\label{EQ_g-prime}
				\anglim_{w\to+\infty} g'(w)=0.
			\end{equation}
			To this end, we fix some arbitrary ${\delta\in(0,\pi/2)}$ and ${\varepsilon>0}$. It follows from~\eqref{EQ_Im-g-0} that there exists~${u_0>0}$ such that if \def\uppest{\varepsilon\delta/8}$w\in\mathbb S_{\delta/2}^+$ and ${\Re w>u_0}$, then $|\Im g(w)|<\uppest=:h(\delta,\varepsilon)$. Therefore, the function $g$ maps the half-strip $U_{\delta,\varepsilon}:=\{{w\in\mathbb S_{\delta/2}^+}:{\Re w>u_0}\}$ into the strip $S_{\delta,\varepsilon}:=\{\zeta:{|\Im\zeta|<h(\delta,\varepsilon)}\}$. Thus, for any $w\in\mathbb S_{\delta}^+$ with ${\Re w>u_0+\delta/2}$, we have
			$$
			|g'(w)|\le \frac{\lambda_{U_{\delta,\varepsilon}}(w)}{\lambda_{S_{\delta,\varepsilon}}(g(w))} < 4\,\frac{h(\delta,\varepsilon)}{\delta/2}=\varepsilon,
			$$
			where we used the general Schwarz\,--\,Pick Lemma and the classical inequality relating the hyperbolic density $\lambda_D$ to the distance to~$\partial D$, where $D$ is a simply connected domain, see e.g. \cite[Theorem~6.4, ineq.\,(8.4)]{BM2007}.
			Since ${\delta\in(0,\pi/2)}$ and ${\varepsilon>0}$ are arbitrary, this proves~\eqref{EQ_g-prime}.
			\phantom\qedhere
		\end{proof}

		\begin{proof}[\proofof{Lemma~\ref{LM_isogonality}\,\ref{IT_LM_isogonality(B)}}] If $F$ satisfies~\eqref{EQ_LM_isogonality}, then
			$$
			\log |F(x)|=\Re \log F(x)=\Re \log F(1)+\int \limits_{1}^x \Re \left(\frac{\xi F'(\xi)}{F(\xi)}\right) \frac{\di\xi}{\xi} \to +\infty \quad\text{as~$x~\to+\infty$}, $$
			since $\Re (\xi F'(\xi)/F(\xi)) \to 1$. Hence $|F(x)| \to+\infty$ as $x \to+\infty$, so by the Lindel\"of angular limit theorem, see e.g. \cite[Theorem~9.3]{Pombook75} or \cite[Theorem 1.5.7]{BCD-Book}, ${F(z)\to\infty}$ as ${z \to \infty}$ non-tangentially, i.e. $F$ has a boundary fixed point at $\infty$. To show that~\ref{IT_LM_isogonality-isog} and hence~\ref{IT_LM_isogonality-deri} hold, we again pass to the map ${f\in\Hol(\mathbb S,\mathbb S)}$ defined in the proof of~\ref{IT_LM_isogonality(A)} and adopt the notation introduced there. Condition~\eqref{EQ_LM_isogonality} means that $\anglim_{w\to+\infty}f'(w)=1$.  Using the invariant form of the Schwarz\,--\,Pick Lemma and the explicit formula for $\lambda_{\mathbb S}$, we therefore have
			$$
			\cos\big(\Im f(x)\big)=\frac{\lambda_{\mathbb S}\big(x\big)}{\lambda_{\mathbb S}\big(f(x)\big)} \ge |f'(x)| \to 1\quad\text{as~$~\Real\ni x\to+\infty$.}
			$$
			It follows that $\Im f(x)\to0$ as~$~\Real\ni x\to+\infty$.
			Moreover,
			$$
			\Im\big(f(w)-w\big)=\Im f(\Re w)+\Im \int\limits_{\Re w}^w\! \big( f'(\zeta)-1 \big) \, \di\zeta\quad\text{for any~$w\in\mathbb S$.}
			$$
			Bounding the integral above by $|\Im w|\cdot \max\big\{|f'(\zeta)-1|\colon \zeta \in [\Re w,\,w]\big\}$, we conclude  that $\anglim_{w\to+\infty}\Im\big(f(w)-w\big)=0$, which proves~\ref{IT_LM_isogonality-isog}.
		\end{proof}

There is a one-to-one relation between the isogonality of a simply connected domain and isogonality of the corresponding Riemann map, see e.g.~\cite[\S11.3]{Pommerenke:BB} or~\cite[\S{}V.5]{GarnettMarshall2005}. Moreover, for holomorphic self-maps, not necessarily univalent, the following result holds, which we believe is known to specialists and which we prove here because of lack of a suitable reference for the non-univalent case.

		\begin{proposition}\label{PR_known1}
			Suppose that $F\in\Hol(\UH,\UH)$ has a boundary  fixed point at~$\infty$ and that it is conformal in the weak sense at~$\infty$.  Then there exists a simply connected domain~$U\subseteq\UH$ satisfying the following conditions:
			\begin{romlist}
				\item\label{IT_known1-isogonal} $U$ is isogonal at~$\infty$.
				\item\label{IT_known1-univalent} $F$ is univalent in~$U$.
				\item\label{IT_known1-image-isogonal} $W:=F(U)$ is also isogonal at~$\infty$.
				\item\label{IT_known1-F_inverse-isogonal} The inverse function $G:=(F|_U)^{-1}:W\to U$  satisfies
					$$
					\anglim_{\zeta\to\infty}G(\zeta)=\infty,\quad~  \anglim_{\zeta\to\infty}\arg\frac{G(\zeta)}{\zeta}=0,\quad \text{~and}\quad \anglim_{\zeta\to\infty}\frac{G'(\zeta)}{|G'(\zeta)|}=1.
					$$
			\end{romlist}
		\end{proposition}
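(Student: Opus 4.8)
The plan is to transfer everything to the strip model, exactly as in the proof of Lemma~\ref{LM_isogonality}. Via the conformal map $w\mapsto e^w$ of $\mathbb S=\{w:|\Im w|<\pi/2\}$ onto $\UH$, the hypotheses furnish $f\in\Hol(\mathbb S,\mathbb S)$ with $F(e^w)=\exp f(w)$; by Lemma~\ref{LM_isogonality} the function $g(w):=f(w)-w$ satisfies $\anglim_{w\to+\infty}g'(w)=0$ and $\anglim_{w\to+\infty}\Im g(w)=0$, and since $F$ has a boundary fixed point at $\infty$ one has $\Re f(x)=\log|F(e^x)|\to+\infty$ as $x\to+\infty$. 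Under $w\mapsto e^w$, isogonality of a domain at $\infty$ becomes the property of containing, for every $\theta\in(0,\pi/2)$, a half-strip $\{\Re w>R_\theta,\ |\Im w|<\theta\}$, and the three angular-limit statements in~\ref{IT_known1-F_inverse-isogonal} translate into $\anglim_{w\to+\infty}\Re G(w)=+\infty$, $\anglim_{w\to+\infty}\Im\big(G(w)-w\big)=0$, and $\anglim_{w\to+\infty}\arg G'(w)=0$, where $G$ denotes the inverse of $f$ on the strip-picture of $U$. So it suffices to construct a simply connected $U_0\subseteq\mathbb S$ with these four properties for $f$.

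The core is the construction of $U_0$. Fix a small $\varepsilon_0\in(0,1/10)$. Since $\anglim_{w\to+\infty}g'(w)=0$ holds in every substrip $\{|\Im w|<\pi/2-\delta\}$, choose $\delta_n\searrow0$ with $\delta_1=\pi/4$ and $a_n\nearrow+\infty$ increasing rapidly (enlarging the gaps $a_{n+1}-a_n$ whenever convenient) so that $|g'(w)|<\varepsilon_0$ whenever $\Re w\ge a_n$ and $|\Im w|\le\pi/2-\delta_n$. Let $\psi:[a_1,+\infty)\to(0,\pi/2)$ be continuous and increasing, equal to $\pi/2-\delta_n$ on a large middle portion of $[a_n,a_{n+1}]$ and interpolating gently in between so that $\psi$ is $\varepsilon_0$-Lipschitz with $\inf\psi=\pi/2-\delta_1>0$, and set $U_0:=\{w\in\mathbb S:\Re w>a_1,\ |\Im w|<\psi(\Re w)\}$. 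Then $U_0$ is a simply connected domain, it contains every half-strip $\{\Re w>a_n,\ |\Im w|<\pi/2-\delta_n\}$ (hence its $\UH$-image is isogonal at $\infty$, giving~\ref{IT_known1-isogonal}), and $|g'|<\varepsilon_0$ on $U_0$.

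For~\ref{IT_known1-univalent} I would prove that $f$ is univalent on $U_0$. Because $\psi$ is $\varepsilon_0$-Lipschitz and bounded below by $\pi/2-\delta_1>0$, the domain $U_0$ is $K$-quasiconvex with a constant $K=K(\delta_1)$ \emph{independent of} $\varepsilon_0$: any two points of $U_0$ can be joined inside $U_0$ by a path of length at most $K$ times their Euclidean distance (route through the axis $\{\Im w=0\}$ when the points are far apart, and use a short broken segment ``pushed toward the axis'' when they are close). Then, given $w_1\neq w_2$ in $U_0$ and such a path $\gamma$, $f(w_1)-f(w_2)=(w_1-w_2)+\int_\gamma g'(w)\,dw$ with $\big|\int_\gamma g'\,dw\big|\le\varepsilon_0\,\ell(\gamma)\le K\varepsilon_0|w_1-w_2|<|w_1-w_2|$ as soon as $\varepsilon_0<1/K$, so $f(w_1)\neq f(w_2)$. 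For~\ref{IT_known1-image-isogonal} and~\ref{IT_known1-F_inverse-isogonal}: fix $\theta<\pi/2$, pick $n$ and $\theta<\theta'<\pi/2-\delta_n$ with $|\Im g(w)|<\theta'-\theta$ for $\Re w$ large with $|\Im w|\le\pi/2-\delta_n$, and set $U_\theta':=\{\Re w>a_n,\ |\Im w|<\theta'\}\subseteq U_0$. An argument-principle computation on the rectangles $\{a_n<\Re w<T,\ |\Im w|<\theta'\}$ as $T\to+\infty$ — using $f=\mathrm{id}+g$: the images of the horizontal edges stay beyond $\{|\Im w|=\theta\}$ far out since $|\Im g|<\theta'-\theta$ there, the image of the right edge recedes to $\Re w=+\infty$ since $\Re f\to+\infty$, and the image of the left edge is a fixed compact curve — shows $f(U_\theta')\supseteq\{\Re w>A_\theta,\ |\Im w|<\theta\}$ for some $A_\theta$; hence $W_0:=f(U_0)$ is isogonal at $\infty$. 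Writing $G:=(f|_{U_0})^{-1}$, from $f\circ G=\mathrm{id}$ we get $G(p)-p=-g(G(p))$ and $G'(p)=1/f'(G(p))$, while $p\to\infty$ non-tangentially forces $G(p)\in U_\theta'$ with $G(p)\to\infty$ inside a \emph{fixed} substrip, so $\Im g(G(p))\to0$ and $g'(G(p))\to0$; transporting back through $w\mapsto e^w$ yields the three limits in~\ref{IT_known1-F_inverse-isogonal}.

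The main obstacle is precisely the construction of $U_0$ in the second step: isogonality at $\infty$ forces the opening angle of $U_0$ to approach $\pi/2$, whereas univalence of $F$ wants convexity-type control, and the rate at which $|g'|$ decays near the slit boundary $\{|\Im w|=\pi/2\}$ is not controlled. The Lipschitz-graph construction — a ``tube'' that widens slowly enough to remain uniformly quasiconvex while staying inside the region where $|g'|<\varepsilon_0$ — resolves this tension, and everything else is bookkeeping with angular limits transported through $w\mapsto e^w$.
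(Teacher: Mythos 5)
Your proposal is correct, and it takes a genuinely different route from the paper's proof, even though both build a ``widening tube'' on which $F$ is injective. The paper works directly in $\UH$: the domain $U$ is a union of carefully interlocked truncated sectors $U_n$ with openings $\theta_n\to\pi/2$, univalence is proved by a Noshiro--Warschawski argument ($\Re F'>0$ on $\overline{U_n\cup U_m}$) along a Lipschitz path $\Gamma$ tracking the lower envelope of a segment and $\partial(U_n\cup U_m)$, isogonality of $W=F(U)$ comes from Carath\'eodory's extension theorem applied to $F|_{U_n}$, and the limits in \ref{IT_known1-F_inverse-isogonal} follow by transporting Lemma~\ref{LM_isogonality}\ref{IT_LM_isogonality(A)} through $F_n^{-1}$. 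You instead transfer everything to the strip $\mathbb S$ via $w\mapsto e^w$ (as the paper itself does in the proof of Lemma~\ref{LM_isogonality}), build the tube as a Lipschitz-graph domain $U_0=\{|\Im w|<\psi(\Re w)\}$ on which $|g'|<\varepsilon_0$, get univalence from quasiconvexity of $U_0$ together with the elementary bound $|f(w_1)-f(w_2)|\ge(1-K\varepsilon_0)|w_1-w_2|$, prove image isogonality by an argument-principle count on expanding rectangles, and obtain \ref{IT_known1-F_inverse-isogonal} from the identities $G(p)-p=-g(G(p))$ and $G'(p)=1/f'(G(p))$. What each buys: your univalence step is quantitatively cleaner, and in fact needs less than you claim --- because $\psi$ is increasing, routing horizontally at the lower of the two imaginary parts and then vertically (or through the real axis for points on opposite sides) already gives $K\le\sqrt2$, so the Lipschitz condition on $\psi$ is only needed to keep $U_0$ inside the region where $|g'|<\varepsilon_0$, not for quasiconvexity. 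The paper's Carath\'eodory step is shorter than your argument-principle computation but imports heavier machinery. The one piece of ``bookkeeping'' in your outline that genuinely needs to be said out loud is the control of the image of the right edge of the rectangle: you need $\Re g(w)=o(|w|)$ on each sub-half-strip (which follows from $\anglim g'=0$ by integration) so that $\Re f\to+\infty$ uniformly along $\Re w=T$; and you should place the rectangle's left edge far enough to the right that it sits inside $U_0$ and inside the region where $|\Im g|<\theta'-\theta$. With those two remarks spelled out, the argument is complete.
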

The angular limits in~\ref{IT_known1-F_inverse-isogonal} above are to be understood in the sense of Definition~\ref{DF_anglim-in-iso-domain}.
		\begin{proof}
\StepG{Construction of a domain~$U$ satisfying~{\rm\ref{IT_known1-isogonal}}.\,} For $n\in\Natural$, denote $\theta_n:=\pi/2-1/2^n$. According to Lemma~\ref{LM_isogonality}\,\ref{IT_LM_isogonality(A)}, for each ${n\in\Natural}$ there exists $R_n>0$ such that
			\begin{equation}\label{EQ_F-prime}
				F'(\zeta)\neq0\quad\text{and}\quad \big|\arg F'(\zeta)\big|<1/2^{n+1} \quad \text{for all~}~\zeta\in \C\cap\overline{A_{\theta_n,R_n}}.
			\end{equation}
			Note that
			$$
			U_n:=\big\{\zeta\in\UH\colon|\arg \zeta|<\theta_n\big\}\,\bigcap\,\big\{\zeta\in\UH\colon \Re \zeta>R_n,~|\arg(\zeta-R_n)|<\theta_{n+1}\big\}~\subseteq~A_{\theta_n,R_n}.
			$$
			Clearly, the union $U:=\bigcup_{n\in\Natural}U_n$ is a domain of the form $\{u+iv:u>R_1,~|v|<f(u)\}$, where ${f:[R_1,+\infty)\to[0,+\infty)}$ is an increasing continuous function with ${f(R_1)=0}$. Therefore, $U$ is simply connected. Moreover,~\ref{IT_known1-isogonal} holds since ${U_n\supset A_{\theta_n,\rho_n}}$ for any ${n\in\Natural}$, where $\rho_n:={R_n\sin\theta_{n+1}/\sin(\theta_{n+1}-\theta_n)}$.

			\StepP{\ref{IT_known1-univalent}} Fix two distinct points $\zeta_1,\zeta_2\in U$ and choose a pair of integers ${n\ge1}$ and ${m\ge n}$ such that ${\zeta_1,\zeta_2\in U_{n,m}}:={U_n\cup U_m}$.
			By construction,
			$$
			U_{n,m}=\big\{u+iv\colon u>R_n,~|v|<f_{n,m}(u) \big\},
			$$
			where $f_{n,m}$ is an increasing function in $[R_n,+\infty)$ with $f_{n,m}(R_n)=0$. Moreover,
			\begin{equation}\label{EQ_fnm-Lip}
				\big|f_{n,m}(u_2)-f_{n,m}(u_1)\big|\le C_m|u_2-u_1|,\quad C_m:=\tan\theta_{m+1},\qquad\text{for any~$~u_1,u_2\ge R_n$.}
			\end{equation}
			In particular, if ${\Re \zeta_2=\Re \zeta_1}$, then the line segment joining $\zeta_1$ and~$\zeta_2$ lies entirely in~$U_{n,m}$. Since ${\Re F'>0}$ in the closure of~$U_{n,m}$, arguing as in the proof of the classical Noshiro\,--\,Warschawski Theorem (see e.g. \cite[Theorem~2.16]{Duren}) one can easily see that ${F(\zeta_2)\neq F(\zeta_1)}$.

			So we may suppose that ${\Re \zeta_2>\Re \zeta_1}$ and additionally, that the line segment~$I$ joining $\zeta_1$  with~$\zeta_2$ does not lie in the closure of~$U_{n,m}$. From the Lipschitz property~\eqref{EQ_fnm-Lip}, this is only possible in the case of ${|a|<C_m}$, where ${a\in\Real}$ stands for the slope of~$I$, meaning that  for a suitable ${b\in\Real}$ the segment~$I$ is contained in the graph of ${g(u):=au+b}$. Clearly, ${a\neq0}$. Thanks to the symmetry, we may suppose without loss of generality that ${a>0}$.
			Write ${\zeta_k=:u_k+iv_k}$, ${k=1,2}$. Since ${\zeta_1\in U_{n,m}}$, we have ${g(u_1)>-f_{n,m}(u_1)}$ and, due to monotonicity, ${g(u)>-f_{n,m}(u)}$ for all ${u\in[u_1,u_2]}$. Hence, $I$ can have intersection points with $\partial U_{n,m}$ only in the upper half-plane ${\{\zeta:\Im \zeta>0\}}$. It follows that the arc $\Gamma:=\{u+ih(u):u\in[u_1,u_2]\}$ of the graph of the continuous function $h(u):={\min\{g(u),f_{n,m}(u)\}}$  joining $\zeta_1$ to~$\zeta_2$ is contained in the closure of~$U_{n,m}$. Taking into account that $h$ is Lipschitz continuous with the same constant~$C_m$ and using~\eqref{EQ_F-prime}, it is not difficult to see that $\Re F(\zeta)$ changes strictly monotonically as $\zeta$ moves along~$\Gamma$. In particular, ${\Re F(\zeta_2)>\Re F(\zeta_1)}$.

			\StepPm{\ref{IT_known1-image-isogonal}}is completely standard. We use the notation introduced on the previous step. Setting ${F(\infty):=\infty}$, the function~$F$ extends to a continuous injective map of the closure of~$U_{n+1}$ regarded as a Jordan domain on the Riemann sphere for any ${n\in\Natural}$.
 Taking that by the hypothesis, ${\arg \big(F(\zeta)/\zeta\big)\to0}$ as ${\zeta\to\infty}$ non-tangentially, we may conclude that  ${F(U_{n+1})\supset A_{\theta_n,P_n}}$ for some ${P_n>0}$ large enough. It follows that
			$$
     F(U)~=~\bigcup_{n\in\Natural} F(U_n)~\supset~\bigcup_{n\in\Natural} F(U_{n+1})~\supset~\bigcup_{n\in\Natural} A_{\theta_n,P_n}
			$$
	and hence, $F(U)$ is isogonal at~$\infty$.

\StepP{\ref{IT_known1-F_inverse-isogonal}} We follow the same argument as in the previous step. Namely, given ${\theta\in(0,\pi/2)}$ choose some ${n\in\Natural}$ such that ${\theta_n>\theta}$. Then ${A_{\theta,P}\subseteq W_n:=F(U_n)}$ for some ${P>0}$. Therefore, for any $\zeta\in A_{\theta,P}$, we have $G(\zeta)=(F|_{U_n})^{-1}(\zeta)$. By Carath\'eodory's Extension Theorem (see e.g. \cite[Theorem~9.10]{Pombook75}), $F|_{U_n}$ extends to a homeomorphism~$F_n$ of the closure of $U_n$ onto the closure of $W_n$ in~${\C\cup\{\infty\}}$. Taking into account that ${F(x)\to\infty}$ as ${x\to+\infty}$, we have $F_n(\infty)=\infty$. Therefore, ${w:=G(\zeta)\to F_n^{-1}(\infty)=\infty}$ as ${W_n\supset A_{\theta,P}\ni \zeta\to\infty}$. It further follows that
			$$
			\lim_{A_{\theta,P}\ni \zeta\to\infty}\arg\frac{G(\zeta)}{\zeta}=\lim_{U_n\ni w\to\infty}\arg\frac{w}{F(w)}=0\quad\text{and~}~
			\lim_{A_{\theta,P}\ni \zeta\to\infty}\frac{G'(\zeta)}{|G'(\zeta)|}=\lim_{U_n\ni w\to\infty}\frac{|F'(w)|}{F'(w)}=1,
			$$
where the last equality holds by Lemma~\ref{LM_isogonality}\,\ref{IT_LM_isogonality(A)}.
		\end{proof}

We conclude the Appendix with two rather elementary lemmas, which are used in the proofs of our main results and which  we include for the sake of completeness.
		Let us denote by $\BH(\zeta_0,R)\coloneqq\{\zeta\in\UH\,:\, \hypdist_\UH(\zeta,\zeta_0)\leq R\}$  the closed hyperbolic disk in~$\UH$ of radius~${R>0}$ centered at~${\zeta_0\in\UH}$.
		\begin{lemma}\label{LM_qualitative}
			For any $R>0$ and any ${\varepsilon>0}$ there exists ${\delta\in(0,1)}$ such that for every function $F\in\Hol(\UH,\UH)$ satisfying ${F(1)=1}$ and ${F'(1)\in(1-\delta,1)}$, we have
			\begin{equation}\label{EQ_LM_qualitative}
				\max_{\zeta\in \BH(1,R)}\big|F'(\zeta)-1\big| \,<\, \varepsilon.
			\end{equation}
		\end{lemma}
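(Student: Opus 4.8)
The plan is to prove Lemma~\ref{LM_qualitative} by a standard normal families / compactness argument combined with the invariant Schwarz--Pick Lemma, reducing the quantitative statement to a qualitative convergence statement. First I would argue by contradiction: suppose the conclusion fails for some fixed $R>0$ and $\varepsilon>0$. Then there exists a sequence $(F_n)\subseteq\Hol(\UH,\UH)$ with $F_n(1)=1$ and $F_n'(1)\in(1-\tfrac1n,1)$, yet $\max_{\zeta\in\BH(1,R)}|F_n'(\zeta)-1|\ge\varepsilon$ for every $n$. The family $\Hol(\UH,\UH)$ is normal (each $F_n$ omits the value in the left half-plane, or one passes to $\UD$ via $T$ where boundedness is immediate), so after passing to a subsequence we may assume $F_n\to F$ locally uniformly in~$\UH$, with $F$ either a holomorphic self-map of~$\UH$ or a constant in~$\partial\UH$. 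The normalization $F_n(1)=1$ rules out the constant limit, so $F\in\Hol(\UH,\UH)$ with $F(1)=1$.

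Next I would identify the limit map. Since $F_n\to F$ locally uniformly, $F_n'\to F'$ locally uniformly as well, so $F'(1)=\lim_n F_n'(1)=1$. By the one-point Schwarz--Pick inequality~\eqref{EQ:SchwarzPickOnePoint} (in the half-plane form, $\dhyp F(1)=\lambda_\UH(F(1))|F'(1)|/\lambda_\UH(1)=|F'(1)|\le 1$ because $F(1)=1$ forces $\Re F(1)=\Re 1$), equality $|F'(1)|=1$ holds, hence $F$ is a conformal automorphism of~$\UH$. An automorphism fixing~$1$ with $F'(1)=1$ is necessarily the identity: writing $F=L(\cdot;\mu)$ in the notation of~\eqref{EQ_explicit_f-la} with $\mu=F'(1)=1>0$ gives $a(\mu)=\infty$, i.e.\ $F=\id_\UH$. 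Therefore $F\equiv\id_\UH$, so $F'\equiv 1$.

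Finally I would derive the contradiction. Locally uniform convergence $F_n'\to F'\equiv 1$ on the compact set $\BH(1,R)\subseteq\UH$ yields $\max_{\zeta\in\BH(1,R)}|F_n'(\zeta)-1|\to 0$, contradicting the standing assumption that this maximum is $\ge\varepsilon$ for all~$n$. This completes the proof.

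I do not expect any genuine obstacle here; the only points requiring a line of care are (i) justifying that the limit of the normal sequence is not a boundary constant — handled by the normalization $F_n(1)=1$ — and (ii) the elementary fact that an automorphism of~$\UH$ fixing an interior point with unit derivative there is the identity, which follows directly from the explicit description of $\Aut(\UH)$ already recorded via~\eqref{EQ_explicit_f-la}. Everything else is routine.
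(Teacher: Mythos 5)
Your proof is correct and follows essentially the same route as the paper's: a contradiction argument via normal families, identification of the locally uniform limit as a self-map fixing $1$ with derivative $1$ there, invocation of the one-point Schwarz--Pick equality case to conclude the limit is the identity, and the ensuing contradiction with uniform convergence of derivatives on the compact $\BH(1,R)$. You merely spell out the automorphism-to-identity step slightly more explicitly (via the $L(\cdot;\mu)$ formula) than the paper, which simply cites the Schwarz--Pick Lemma; the argument is otherwise identical.
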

		\begin{proof}
			Suppose on the contrary that the assertion does not hold. Then there exist $R>0$, ${\varepsilon>0}$, and a sequence $(F_n)\subseteq\Hol(\UH,\UH)$ with ${F_n(1)=1}$ such that ${F'_n(1)\to1}$ as ${n\to+\infty}$ and, at the same time, for all ${n\in\Natural}$
			\begin{equation}\label{EQ_contradiction}
				F'_n\big(\BH(1,R)\big)\not\subseteq\UD(1,\varepsilon)\coloneqq\{w\in\C\,:\, |w-1|<\varepsilon\}.
			\end{equation}

			By normality, $(F_n)$ has a subsequence $(F_{n_k})$ converging locally uniformly in~$\UH$. Denote by $F_*$ its limit. Clearly, ${F_*(1)=1}$. Therefore, ${F_*\in\Hol(\UH,\UH)}$ and ${F_*'(1)=1}$. The Schwarz\,--\,Pick Lemma for the half-plane implies $F_*=\id_\UH$. It follows that $F_n'\to F_*'\equiv1$ uniformly on~$\BH(1,R)$, which contradicts~\eqref{EQ_contradiction}.
		\end{proof}

			\begin{remark}\label{RM_l.u.-convergence}
				Let $D\subset\C$ be a domain. Recall that for a sequence of functions ${H_n:D_n\to\C}$ whose domains~$D_n$ do not necessarily contain~$D$, it is still possible to talk about locally uniform convergence in~$D$. Namely, such a sequence $(H_n)$ is said to \emph{converge} to a function ${H_*:D\to\C}$ \emph{locally uniformly in~$D$}, if for each compact set ${K\subset D}$ there exists ${n_0(K)\in\N}$ verifying the following two conditions: (i)~${K\subset D_n}$ for all ${n\ge n_0(K)}$, and (ii)~as~${m\to+\infty}$, ${H_{n_0(K)+m}\to H_*}$ uniformly on~$K$.
			\end{remark}

			\begin{lemma} \label{lem:DiscreteHypDisksToAngularLimit}
				Let $H$ be a complex-valued function in a domain ${U\subset\UH}$  and let ${\ell \in \C}$. Suppose that the sequence $(H_n)$, defined by $H_n(\zeta):={H\big( 2^n \zeta \big)}$ for $\zeta\in {D_n:=\{\zeta\colon 2^n\zeta\in U\}}$, converges locally uniformly in~$\UH$ to the constant function~$H_*\equiv\ell$.
				Then
				$$
				\angle \lim \limits_{\zeta \to \infty} H(\zeta)=\ell
				$$
				in the sense of Definition~\ref{DF_anglim-in-iso-domain}. In particular, $U$ is isogonal at~$\infty$.
			\end{lemma}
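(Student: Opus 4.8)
The statement is a "discrete-to-continuous" Lindelöf-type passage: from locally uniform convergence of the dyadically rescaled functions $H_n(\zeta)=H(2^n\zeta)$ to the constant $\ell$, one wants to deduce that $H$ has angular limit $\ell$ at $\infty$ in the sense of Definition~\ref{DF_anglim-in-iso-domain}. First I would unwind what the hypothesis literally gives through Remark~\ref{RM_l.u.-convergence}: for every compact $K\subset\UH$ there is $n_0(K)$ such that $K\subset D_n$ for all $n\ge n_0(K)$ and $H_{n_0(K)+m}\to\ell$ uniformly on $K$. Taking $K$ to be a suitable closed "annular box" will be the engine of the argument.

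\textbf{Key steps.} Fix $\theta\in(0,\pi/2)$ and $\varepsilon>0$; I must produce $R>0$ so that $|H(\zeta)-\ell|<\varepsilon$ for all $\zeta\in A_\theta$ with $|\zeta|>R$, and along the way check $A_{\theta,R}\subset U$. The natural compact set is
$$
K_\theta:=\{\zeta\in\UH\colon |\arg\zeta|\le\theta,\ 1\le|\zeta|\le 2\}.
$$
Its dyadic dilates $2^nK_\theta=\{\zeta\colon|\arg\zeta|\le\theta,\ 2^n\le|\zeta|\le 2^{n+1}\}$ tile the sector $A_\theta\cap\{|\zeta|\ge1\}$ as $n$ ranges over $\N\cup\{0\}$: every point $\zeta\in A_\theta$ with $|\zeta|\ge1$ lies in $2^nK_\theta$ for the unique $n=\lfloor\log_2|\zeta|\rfloor$. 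By Remark~\ref{RM_l.u.-convergence} applied to $K=K_\theta$, there is $n_0\in\N$ with $K_\theta\subset D_n$ for all $n\ge n_0$ — equivalently $2^nK_\theta\subset U$ for $n\ge n_0$, which already yields $A_{\theta,R}\subset U$ for $R:=2^{n_0}$ (since every point of $A_\theta$ of modulus $\ge 2^{n_0}$ lies in some $2^nK_\theta$ with $n\ge n_0$), hence $U$ is isogonal at $\infty$ because $\theta$ was arbitrary. Moreover the uniform convergence $H_{n_0+m}\to\ell$ on $K_\theta$ means: for all sufficiently large $m$, say $m\ge m_0$, $\sup_{\zeta\in K_\theta}|H(2^{n_0+m}\zeta)-\ell|<\varepsilon$, i.e. $\sup_{w\in 2^{n_0+m}K_\theta}|H(w)-\ell|<\varepsilon$. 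Setting $N:=n_0+m_0$ and $R:=2^N$, any $\zeta\in A_\theta$ with $|\zeta|>R$ lies in $2^nK_\theta$ for some $n\ge N$, hence $|H(\zeta)-\ell|<\varepsilon$. Since $\theta$ and $\varepsilon$ are arbitrary, $\anglim_{\zeta\to\infty}H(\zeta)=\ell$.

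\textbf{The main obstacle.} There is no deep difficulty here — the content is purely bookkeeping about how the dyadic dilates of a fixed compact "fundamental domain" for the $\zeta\mapsto 2\zeta$ action cover a truncated sector. The one point that needs a little care is the translation between "$H_{n_0+m}\to\ell$ uniformly on $K_\theta$ for all large $m$" and a single uniform bound valid on the union $\bigcup_{n\ge N}2^nK_\theta$: one must pick $K_\theta$ so that its dilates actually \emph{overlap} (or at least cover without gaps), which is why I use the closed range $1\le|\zeta|\le 2$ rather than a half-open one, so that consecutive dilates $2^nK_\theta$ and $2^{n+1}K_\theta$ share the circle $\{|\zeta|=2^{n+1}\}$ and the whole truncated sector is covered. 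With that choice the argument is immediate. For completeness I would also remark that the "in particular" clause ($U$ isogonal at $\infty$) is exactly what makes Definition~\ref{DF_anglim-in-iso-domain} applicable, and it has already been obtained as a byproduct above.
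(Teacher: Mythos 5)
Your proof is correct and follows essentially the same route as the paper's: same choice of the compact annular sector $K_\theta=\{\zeta\colon|\arg\zeta|\le\theta,\ 1\le|\zeta|\le2\}$, same observation that its dyadic dilates cover $A_{\theta,2^{n_0}}$, and same unwinding of the locally-uniform-convergence hypothesis via Remark~\ref{RM_l.u.-convergence}. Nothing to add.
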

			\begin{proof}
				It is easy to show that $U$ is isogonal at~$\infty$. Indeed, given ${\theta\in(0,\pi/2)}$, consider the compact set $K:={\{\zeta\in\UH\colon|\arg\zeta|\le\theta,~1\le|\zeta|\le2\}}$. By the hypothesis, there is ${n_0\in\N}$ such that ${K\subset D_n}$ for all ${n\ge n_0}$. It follows that if we denote ${R:=2^{n_0}}$ and ${\Phi(\zeta):=2\zeta}$, then
				$$
				A_{\theta, R}~\subset~\bigcup_{n=n_0}^{+\infty}\Phi^{\circ n}(K)~\subset~ U,
				$$
				as desired.

				Furthermore, again by the hypothesis, for any given ${\varepsilon>0}$ there is ${m_0\in\N}$ such that ${|H_{n_0+m}(\zeta)-\ell|<\varepsilon}$ for all ${m\ge m_0}$ and all ${\zeta\in K}.$ It follows that
				$$
				|H(\zeta)-\ell|<\varepsilon\quad\text{for all $~\zeta\,\in\,\bigcup_{m=m_0}^{+\infty} \Phi^{\circ (n_0+m)}(K)~\supset~A_{\theta,\rho}~$~with~$~\rho:=2^{n_0+m_0}$.}
				$$
				Since $\theta\in(0,\pi/2)$ and ${\varepsilon>0}$ here are arbitrary, this argument proves that $\anglim_{\zeta\to\infty}H(\zeta)=\ell~$ in the sense of Definition~\ref{DF_anglim-in-iso-domain}.
			\end{proof}

	\end{document}